\colorlet{darkblue}{blue!50!black}
\colorlet{darkblue}{red!100!black}
\newtheorem{theorem}{Theorem}[section]
\newtheorem{lemma}[theorem]{Lemma}
\newtheorem{definition}[theorem]{Definition}
\newtheorem{remark}[theorem]{Remark}
\newtheorem{hypothesis}[theorem]{Hypothesis}
\newcommand{\Tr}{\mathop{\mathrm{Tr}}}
\let\originalleft\left
\let\originalright\right
\renewcommand{\left}{\mathopen{}\mathclose\bgroup\originalleft}
\renewcommand{\right}{\aftergroup\egroup\originalright}
\theoremstyle{definition}
\def\1{\mathcal{O}}
\def\wi{\widehat}
\def\vi{\widetilde}
\def\g{\boldsymbol{g}}
\def\d{\mathrm{d}}
\def\I{\mathrm{I}}
\def\D{\mathrm{D}}
\def\A{\mathrm{A}}
\def\W{\mathrm{W}}
\def\R{\mathbb{R}}
\def\E{\mathbb{E}}
\def\H{\mathbb{H}}
\def\V{\mathbb{V}}
\def\e{\varepsilon}
\def\2{\mathcal{E}}
\def\L{\mathrm{L}}
\def\u{\boldsymbol{u}}
\def\v{\boldsymbol{v}}
\def\w{\boldsymbol{w}}
\def\C{\mathrm{C}}
\def\X{\mathbf{X}}
\def\Y{\mathbf{Y}} 
\def\P{\mathbb{P}}
\def\N{\mathbb{N}}
\def\x{\boldsymbol{x}}
\def\y{\boldsymbol{y}}
\def\G{\mathrm{G}}
\def\Z{\mathrm{Z}}
\def\PP{\mathrm{P}}
\def\U{\mathbb{U}}
\def\x{\boldsymbol{x}}
\def\vp{\varpi}
\def\s{\sigma}
\def\M{\mathrm{M}}
\def\ZZ{\mathbf{Z}}
\def\DD{\mathcal{D}}
\def\LL{\mathcal{L}}
\def\S{\mathrm{S}}
\def\ee{\boldsymbol{e}}
\def\bphi{\boldsymbol{\phi}}
\newcommand{\Addresses}{{
		\footnote{
			\noindent \textsuperscript{1,2,3}Department of Mathematics, Indian Institute of Technology Roorkee-IIT Roorkee,
			Haridwar Highway, Roorkee, Uttarakhand, 247667, INDIA.\par\nopagebreak
			\noindent  \textit{e-mail:} \texttt{Manil T. Mohan: maniltmohan@ma.iitr.ac.in, maniltmohan@gmail.com.}
			
			\textit{e-mail:} \texttt{Ankit Kumar: akumar14@mt.iitr.ac.in, ankitkumar.2608@gmail.com.}
			
				\textit{e-mail:} \texttt{Kush Kinra:
			kkinra@ma.iitr.ac.in, kushkinra@gmail.com.}
		
			\noindent \textsuperscript{*}Corresponding author.
			
			\textit{Keywords:} Stochastic partial differential equations, locally monotne, Wong-Zakai approximation, support theorem, Gaussian noise.
			
			Mathematics Subject Classification (2020): Primary 60H15, 76D03; Secondary 37H05, 35R60.

}}}
\begin{document}	
	
	\title[Wong-Zakai approximation]{Wong-Zakai approximation for a class of  SPDE\lowercase{s} with fully local monotone coefficients and its application
		\Addresses}

	\author[A. Kumar, K. Kinra and M. T. Mohan]
	{Ankit Kumar\textsuperscript{1}, Kush Kinra\textsuperscript{2} and Manil T. Mohan\textsuperscript{3*}}

	\maketitle

\begin{abstract}
	In this article, we establish the \textsl{Wong-Zakai approximation} result for a class of stochastic partial differential equations (SPDEs) with fully local monotone coefficients perturbed by a multiplicative Wiener noise. This class of SPDEs encompasses various fluid dynamic models and also includes quasi-linear SPDEs, the convection-diffusion equation, the Cahn-Hilliard equation, and the two-dimensional liquid crystal model.  It has been established that the class of SPDEs in question is well-posed, however, the existence of a unique solution to the associated approximating system cannot be inferred from the solvability of the original system. We employ a Faedo-Galerkin approximation method, compactness arguments, and Prokhorov's and Skorokhod's representation theorems to ensure the existence of a \textsl{probabilistically weak solution} for the approximating system. Furthermore, we also demonstrate that the solution is pathwise unique. Moreover, the classical Yamada-Watanabe theorem allows us to conclude the existence of a \textsl{probabilistically strong solution} (analytically weak solution) for the approximating system. Subsequently, we establish the Wong-Zakai approximation result for a class of SPDEs with fully local monotone coefficients.  We utilize the Wong-Zakai approximation to establish the topological support of the distribution of solutions to the SPDEs with fully local monotone coefficients. Finally, we explore the physically relevant stochastic fluid dynamics models that are covered by this work's functional framework.
\end{abstract}
\section{Introduction}\label{Sec1}\setcounter{equation}{0}

\subsection{The  model and literature}
Approximating solutions to stochastic differential equations (SDEs) can be achieved through various methods, including the Euler scheme, the Crank-Nicholson scheme, and the Wong-Zakai approximation, etc., see the works \cite{HLN,HLN1,TMRZ,EWMZ}, etc., for the details of each scheme. This paper aims to investigate the Wong-Zakai approximation for a specific class of stochastic partial differential equations (SPDEs) (to be specified later). 
Due to the continuous yet nondifferentiable nature of Brownian motion, there is a need for an approximation that can capture its properties in some way.
 In 1965, E. Wong and M. Zakai proposed a novel scheme for approximating the solution of an SDE perturbed by a one-dimensional Brownian motion. This approach involved replacing the Brownian motion with a suitable smooth approximation and making a drift correction in the original SDE (see \cite{EWMZ}). This method is now commonly known as the \textsl{Wong-Zakai approximation}. The extension of the Wong-Zakai approximation to the multi-dimensional case was achieved in \cite{SWNI}, building upon the original work of \cite{EWMZ}. Moreover, the Wong-Zakai approximation has received widespread attention in infinite-dimensional cases as well, as evidenced by the works \cite{Ganguly,IPR,KT2,KT,KT1}, etc., which discuss its application to stochastic (partial) differential equations driven by infinite-dimensional noise. Notably, the author in \cite{KT1} provided an extensive overview of the Wong-Zakai approximation for various types of SDEs, including SDEs with finite-dimensional state and noise, SDEs with infinite-dimensional state and finite-dimensional noise, and SDEs with infinite-dimensional state and noise, etc.
Furthermore, \cite{KT1} (and \cite{KT}) also introduced two new types of correction terms in the Wong-Zakai approximation.

The Wong-Zakai approximation studied for two-dimensional hydrodynamical models, such as the magnetic B\'enard problem, Navier-Stokes equations, and magneto-hydrodynamic equations, in \cite{ICAM2}. In \cite{HP}, the authors demonstrated the Wong-Zakai approximation to 1D parabolic nonlinear SPDEs driven by space-time white noise. In addition, the Wong-Zakai approximation of various physical models can be found in \cite{BMM,SGUM,KKMTM,LS,LQ,TTM,Yastrzhembskiy}, etc. and their references. 

In 1974, E. Pardoux extended the theory of the monotone operator from the deterministic to the stochastic case (see \cite{EP1,EP2}). Various authors have investigated the application of variational approaches to SPDEs in the literature, such as \cite{IG,JRMR,MRFYW,MRXZ}, etc., and their cited references. Many works are available on the existence and uniqueness of SPDEs under a variational framework, some of them we have cited above. Moreover, the variational approach has been exploited for the SPDEs with local monotone coefficients, for instance see \cite{ZBWLJZ,WLMR1,WLMR2}, etc. In addition, the works \cite{WL,WLMR3} established the well-posedness of SPDEs with local monotone coefficients with Lyapunnov condition and generalized coercivity conditions, respectively. Recently, the authors in \cite{TMRZ} studied the Wong-Zakai approximation of a class of SPDEs with locally monotone coefficients perturbed by a trace class noise, which covers several stochastic models of fluid dynamics like two-dimensional hydrodynamical type systems (stochastic magneto-hydrodynamic equations, stochastic Navier-Stokes equations, etc.), the $p$-Laplace evolution equation, stochastic porous media equations, etc. Additionally, the functional framework of \cite{TMRZ} covers  two-dimensional stochastic convective Brinkman-Forchheimer equations with a subcritical exponent (for instance see \cite[Remark 2.11]{KKMTM}).

 Let us examine a class of SPDEs with fully local monotone coefficients within the Gelfand triplet $\V\subset\H\subset\V^*$ that are driven by a multiplicative Gaussian noise:
\begin{align}\label{1}
	\d \Y(t)=\A(t,\Y(t))\d t+\sigma(t,\Y(t))\d\W(t), \  \text{ for } \  t\in[0,T],
\end{align}
where $\H$ and $\V$ represent a separable Hilbert space and a reflexive Banach space, respectively, such that the continuous embedding of $\V\hookrightarrow\H$ is dense and $\V^*$ denote the dual of $\V$, and $\W(\cdot)$ is a Hilbert space-valued cylindrical Wiener process on a filtered probability space  $(\Omega,\mathscr{F},\{\mathscr{F}_t\}_{t\geq 0},\P)$ (cf. Subsection \ref{sub2.1}). The maps $\A(\cdot,\cdot)$ and $\sigma(\cdot,\cdot)$ in the above equation are measurable (cf. Subsection \ref{sub2.1}). We mention that the models covered by the work \cite{TMRZ} also come under the functional framework of this work for \eqref{1}. In fact, the functional framework of this work is more general than that of \cite{TMRZ}.

For almost a decade, it has been an open problem to determine the well-posedness of SPDEs with fully local monotone coefficients driven by a multiplicative noise, as noted in \cite{ZBWLJZ,WL4,WLMR2}. Recently, in \cite{MRSSTZ}, the authors proved the well-posedness results for a class of SPDEs with fully local monotone coefficients driven by a multiplicative Gaussian noise under certain assumptions on the noise coefficient. An extension of the results of \cite{MRSSTZ} to include SPDEs driven by L\'evy noise was later provided by authors in \cite{AKMTM4}. In the work \cite{AKMTM5}, the authors explored the asymptotic behavior (large deviation principle) of solutions for a class of SPDEs with fully local monotone coefficients driven by L\'evy noise. In \cite{AKMTM6,TPSS}, the authors considered a class of SPDEs with fully local monotone coefficients driven by a multiplicative Gaussian noise to establish the small time asymptotics and the large deviation principle for solutions of this class of SPDEs, respectively.


The Wong-Zakai approximation can be used to describe the topological support of solutions of SDEs and SPDEs. In 1972, Stroock and Varadhan introduced the concept of topological support for the solution of a SDE in their pioneering work \cite{SV}. Moreover, the support theorem for SDEs is well studied in the works \cite{GP,SWNI} etc. It is worth mentioning here that the Wong-Zakai approximation helps us to find the topological support of solutions of SDEs and SPDEs (cf. \cite{GP,Mackevicius,MS}). Furthermore, one can also say that the support theorem is an application of the Wong-Zakai approximation result, see \cite{ICAM2,KKMTM,TMRZ,Yastrzhembskiy}, etc. We are also demonstrating the support for a solution to the system \eqref{1} with the help of the Wong-Zakai approximation result (see Section \ref{Sec3}). Theorem \ref{thrm2} and Lemma  \ref{lem4.02} help us to prove the topological support of the distribution to the solution of the system \eqref{1} (see Theorem \ref{thrm4.03}).

For any kind of approximation scheme, there is always a question of the rate of convergence of the approximation. In the literature, some authors discussed the rate of convergence of the Wong-Zakai approximation for different SPDEs, that is, they investigated the rate of convergence of the solution of the approximating system to the solution of the original system in terms of the rates of convergence of the driving processes (say, $\W_n(\cdot)$) approximating the original Wiener process (say, $\W(\cdot)$), see the works \cite{BKBAAL,IGAS,IPR,NT}, etc. Particularly, in \cite{IPR}, it has been shown that the rate of convergence of the Wong-Zakai approximation for a class of SPDEs is essentially the same as the rate of convergence of $\W_n(\cdot)$ approximating the $\W(\cdot)$ under certain conditions. It is an interesting open problem to examine the rate of convergence of the Wong-Zakai approximation of system \eqref{1} which will be a part of our future work.


\vskip 0.2 cm
\subsection{Difficulties and approaches}\label{111}  As discussed already, the existence of \textsl{a unique probabilistically strong solution} to the system \eqref{1} with fully local monotone coefficients (see Hypothesis \ref{hyp1} below) has been established in a recent paper \cite{MRSSTZ}. The unique solution for the approximating system (see \eqref{1.6} below) cannot be concluded from the solvability of the original system, as it requires estimating the drift parts $\A$ and $\sigma\dot{\W}^m$ in \eqref{1.6} separately. This is due to the fact that the approximating system is considered as a pathwise deterministic system (the diffusion coefficient is $0$).
The solvability of the system that approximates SPDEs with locally monotone coefficients relies on compactness and monotonicity criteria. However, the aforementioned approach is not immediately applicable for SPDEs with fully local monotone coefficients. We demonstrate the existence of a unique, probabilistically strong solution of the system \eqref{1.6} using compactness principles combined with techniques from pseudo-monotone operator theory.

It is noteworthy that we take into account a general stochastic system (shown in \eqref{4.4}) in our analysis, which encompasses the system \eqref{1.6} and thereby allows us to demonstrate the topological support of the distribution corresponding to the solution of \eqref{1}. The proof of solvability result of the system \eqref{4.4} is based on a standard Faedo-Galerkin approximation technique. We first derive uniform moment estimates for the Galerkin approximating solutions (as displayed in Lemma \ref{lem4.2}), and then establish the tightness of the laws of these solutions in appropriate spaces (as mentioned in Lemma \ref{lem4.3}). Utilizing Prokhorov's theorem (see \cite[Section 5]{PB}) and Skorokhod's representation theorem (see \cite[Theorem C.1]{ZBWLJZ}) along with the pseudo-monotonicity of the mapping $\Y(\cdot)\mapsto \A(\cdot,\Y(\cdot))$, we further show that system \eqref{4.4} admits a probabilistically weak solution. Further, we demonstrate pathwise uniqueness of the solutions to the system \eqref{4.4} (as presented in Theorem \ref{thrm2.9}). Finally, the classical Yamada-Watanabe theorem (\cite[Theorem 2.1]{MRBSXZ}) guarantees the existence of a unique probabilistically strong solution to the system \eqref{4.4}.


Note that the approximating system does not contain any diffusion term (see \eqref{1.6} below), which leads to difficulty in estimating the difference between the solutions of the original and the approximating systems while proving the Wong-Zakai approximation result for the system \eqref{1}. We use the identity
$\displaystyle\int_{0}^{t}=\sum\limits_{l=0}^{\lfloor\frac{t}{\vp}\rfloor}\int_{l\vp}^{(l+1)\vp \land t}$  for  $t\in[0,T],$
where $\vp=\frac{T}{2^m}$, and the structure of the finite-dimensional approximation of the cylindrical Wiener process $\W(\cdot)$ (see \eqref{1.3} below) to make comparable the approximating system with the original system (see Remark \ref{Remark4.2}).

\subsection{Novelties and applications}
Our aim in this work is to demonstrate the Wong-Zakai approximation of the system \eqref{1} with fully local monotone coefficients, which covers various stochastic models related in applied fields such as fluid dynamics, etc. To the best of our understanding, there is no result available in the literature regarding the Wong-Zakai approximation of the model \eqref{1} with fully local monotone coefficients. The framework of this work encompasses all the stochastic models discussed in \cite{TMRZ}. We further emphasize that the assumptions on the noise and the correction term of the approximated system \eqref{4.4} are the same as those in \cite{TMRZ} (as indicated in Hypotheses \ref{hyp1} and \ref{hyp2}). Thus, the examples of noise given in \cite[Section 3]{TMRZ} are also applicable in our framework. The main differences between our work and that done by \cite{TMRZ}, who achieved a similar result for a class of SPDEs with locally monotone coefficients driven by trace class noise, are as follows:
\begin{enumerate}
	\item [$\bullet$]  The coefficient $\A(\cdot,\Y(\cdot))$ appearing in \eqref{1} satisfies
	\begin{align}\label{11}
		2\langle \A(\cdot,\Y_1)-\A(\cdot,\Y_2),\Y_1-\Y_2\rangle & \leq \big(f(\cdot)+\rho(\Y_1)+\eta(\Y_2)\big)\|\Y_1-\Y_2\|_{\H}^2,
	\end{align}
	that is, it allows the right hand side of \eqref{11} to depend on both variables $\Y_1$ and $\Y_2$ (see condition (H.1) of Hypothesis \ref{hyp1}). It covers several interesting examples like the Cahn-Hilliard equation, quasilinear SPDEs, a two-dimensional liquid crystal model, and many more (for instance, one can see \cite{WLMR1,MRSSTZ} or \cite{AKMTM6}), which were not covered in the work \cite{TMRZ}.
	\item [$\bullet$]  The function $f$ appearing in \eqref{11} is assumed to be in the space $\L^1(0,T;\R^+)$ which is more general as compared to the Hypotheses of \cite{TMRZ}.
\end{enumerate}

The major outcomes of this work are listed below:
\begin{enumerate}
	\item [1.] We demonstrate the Wong-Zakai approximation result for the system \eqref{1} under the Hypotheses \ref{hyp1} and \ref{hyp2} (see Theorem \ref{thrm3.3} below).
	\item [2.] We also find the topological support of the distribution to the solution of the system \eqref{1} (under the Hypotheses \ref{hyp1} and \ref{hyp2}) with the help of the Wong-Zakai approximation (Theorem \ref{thrm4.03}).
\end{enumerate}

The results of this work are applicable to the stochastic versions of hydrodynamic models like Burgers equations, two-dimensional Navier-Stokes equations, two-dimensional magneto-hydrodynamic equations, etc.,  porous media equations,  $p$-Laplacian equations, fast-diffusion equations, power law fluids and three-dimensional tamed Navier-Stokes equations etc. There are several other stochastic models satisfying the framework of this work, which we have discussed in Section \ref{Application}. 

\subsection{Organization of the paper} The article is organized in the following manner: In Section \ref{Sec2}, we provide the functional framework followed by the Hypotheses \ref{hyp1} and \ref{hyp2} required to obtain the main results of this paper. Then, we state our first main theorem, that is, the well-posedness of the system \eqref{1.6} (Theorem \ref{thrm2}). To obtain the well-posedness of the system \eqref{1.6}, we define a general system \eqref{4.4} (the system \eqref{1.6} is the particular case of the system \eqref{4.4}) and establish the well-posedness result for this general system \eqref{4.4} (Theorem \ref{thrm4.1}). Section \ref{Seclem4.1} is devoted to the proof of Theorem \ref{thrm4.1} with the help of a standard Faedo-Galerkin approximation technique, tightness arguments, and Prokhorov's and Skorokhod's representation theorems (Lemmas \ref{lem4.2}-\ref{lem4.7} and Theorems \ref{thrm2.8}-\ref{thrm2.9}). Then, the Theorem \ref{thrm2} is a particular case of Theorem \ref{thrm4.1}. In Section \ref{Sec3}, we state and prove our main result (Theorem \ref{thrm3.3}), that is, the Wong-Zakai approximation result for the system \eqref{1.1}. In Section \ref{Sec4}, we discuss the support of a solution to the system \eqref{1.1}, which is a consequence of the Wong-Zakai approximation result (Theorem \ref{thrm4.03}). In the final section, we provide some stochastic models which fall under the functional framework of this article.

	\section{Mathematical formulation and solvability results}\label{Sec2}\setcounter{equation}{0}
	\subsection{Functional framework}\label{sub2.1}
	In this work, we consider the following class of SPDEs with fully local monotone coefficients in a  Gelfand triplet $\V\hookrightarrow\H\hookrightarrow\V^*$ driven by a multiplicative Gaussian noise: 
	\begin{equation}\label{1.1}
		\left\{
		\begin{aligned}
			\d \Y(t)&=\A(t,\Y(t))\d t+\s(\Y(t))\d\W(t), \ \ t\in[0,T],\\
			\Y(0)&=\boldsymbol{y}_0,
		\end{aligned}
		\right.
	\end{equation}
	where  $\H$ and $\V$ represent a separable Hilbert space and a reflexive Banach space, respectively, such that the continuous  embedding of  $\V\hookrightarrow \H$ is dense. Let $\V^*$ and $\H^*\cong \H$ denote  the dual of the spaces $\V$ and $\H$, respectively. The norms of $\H,\V$ and $\V^*$ are denoted by $\|\cdot\|_\H,\|\cdot\|_\V$ and $\|\cdot\|_{\V^*}$, respectively. Let the symbol $(\cdot,\cdot)$ represents the inner product in the Hilbert space $\H$ and the symbol $\langle \cdot,\cdot\rangle$ denotes the duality paring between $\V$ and $\V^*$. Also, we have $\langle \x,\y\rangle =(\x,\y)$, whenever $\x\in\H$ and $\y\in\V$. Let $\U$ be an another separable Hilbert space, and $\L_2(\U,\H)$ be the space of all Hilbert-Schimdt operators from $\U$ to $\H$ with the norm $\|\cdot\|_{\L_2}$ and the inner product $(\cdot,\cdot)_{\L_2}$.  The mappings 
	\begin{align*}
		\A:[0,T]\times\V\to\V^* \ \text{ and } \ \s:\V\to\L_2(\U,\H),
	\end{align*}
	are measurable and $\W(\cdot)$ is a $\U$-valued cylindrical Wiener process on a filtered probability space\break $(\Omega,\mathscr{F},\{\mathscr{F}_t\}_{t\geq 0},\P)$.	
	Next, we define the finite-dimensional approximation of the cylindrical Wiener process $\W(\cdot)$. We know that the $\mathbb{U}$-valued cylindrical Wiener process $\W(t)$ can be written in the following infinite sum (see \cite[Section 4.1.2]{DaZ}):
	\begin{align}\label{1.2}
		\W(t)=\sum_{i=1}^\infty\beta_i(t)\ee_i, \ \ t\in[0,T],
	\end{align}where $\beta_i$'s are standard independent real-valued Brownian motions and $\{\ee_i\}_{i\in\N}$ is an orthonormal basis of the Hilbert space $\U$. For any $m\in\N$, we set $\vp=\frac{T}{2^m}$ and define 
\begin{align}\label{1.3}
	\dot{\W}^m(t)=\sum_{i=1}^m\vp^{-1} \bigg(\beta_i\Big(\lfloor\frac{t}{\vp}\rfloor\vp\Big) -\beta_i \Big(\big(\lfloor\frac{t}{\vp}\rfloor-1\big)\vp\Big)\bigg)\ee_i=:\sum_{i=1}^m\dot{\beta}_i^m(t)\ee_i,\ \ t\in[0,T],
\end{align}where $\lfloor t\rfloor$ (resp. $\lceil t\rceil$) denotes the largest integer which is no more than $t$ (resp. smallest integer which is larger than $t$).  Also, we set
\begin{equation}\label{1.4}
\beta_i(t)=	\left\{\begin{aligned}
	&0, \ &&\text{ for } \ t\leq 0,\\
	&\beta_i(T), \ &&\text{ for } \ t\geq T,
	\end{aligned}
	\right.
\end{equation}
which implies $\dot{\beta}_i(t)=0$, for $t>T$. Therefore, $\{\dot{\beta}_i^m(t)\}_{i=1}^m$ are $\mathscr{F}_t$-adapted and hence $\dot{\W}^m(\cdot)$.
	
For $i=1,\ldots,m$, define  $\s_i:\H\to\H$  as $\s_i(\x):=\s(\x)\ee_i$, for $\x\in\H$. We assume that for each $i$, $\s_i(\cdot)$ is Fr\'echet differentiable and the derivative is denoted by $\D\s_i:\H\to \mathcal{L}(\H,\H)$. Let us define the map
\begin{align}\label{1.5}
	\wi{\Tr}_m:\H\to\H, \ \text{ by } \ \wi{\Tr}_m(\x)=\sum_{i=1}^m\D\s_i(\x)\s_i(\x), \ \ \text{ for all }\ \
	  \x\in\H.
\end{align}We consider the following approximating system:
\begin{equation}\label{1.6}
	\left\{
	\begin{aligned}
		\d \Y^m(t)&=\A(t,\Y^m(t))\d t+\s(\Y^m(t))\dot{\W}^m(t)\d t-\frac{1}{2}\wi{\Tr}_m(\Y^m(t))\d t,\\ 
		\Y^m(0)&=\boldsymbol{y}_0,
	\end{aligned}
	\right.
\end{equation}where $\dot{\W}^m(\cdot)$ and $\wi{\Tr}_m(\cdot)$ have been defined in \eqref{1.3} and \eqref{1.5}, respectively.

	\subsection{Hypothesis and solvability results}   The system \eqref{1.1} with locally monotone coefficients is considered in the works \cite{WLMR1,WLMR2,WLMR3}, etc., where as the system \eqref{1.1} with fully local monotone coefficients is considered in the work \cite{MRSSTZ} (cf. \cite{AKMTM4,AKMTM5,AKMTM6,TPSS}). We consider the following assumptions on the coefficients $\A(\cdot,\cdot)$ and $\sigma(\cdot)$:
	
	\begin{hypothesis}\label{hyp1} Let $f\in\L^1(0,T;\R^+)$ and $\beta\in(1,\infty)$.
		\begin{itemize}
			\item[(H.1)] (\textsl{Hemicontinuity}). The map $\R\ni \lambda \mapsto \langle \A(t,\x_1+\lambda \x_2),\x\rangle\in\R$ is continuous for any $\x_1,\x_2,\x\in\V$ and for a.e. $t\in[0,T]$.
			\item[(H.2)] (\textsl{Local monotonicity}). There exist non-negative constants $\zeta$ and $C$ such that for any $\x_1,\x_2,\x\in\V$ and a.e. $t\in[0,T]$, 
			\begin{align}\label{1.9}
				\langle \A(t,\x_1)-\A(t,\x_2),\x_1-\x_2\rangle & \leq \big(f(t)+\rho(\x_1)+\eta(\x_2)\big)\|\x_1-\x_2\|_\H^2, \\ \label{1.10}
				\|\s(\x_1)-\s(\x_2)\|_{\L_2}^2& \leq \big(\kappa(\x_1)+\varkappa(\x_2)\big)\|\x_1-\x_2\|_\H^2, \\ 
				\label{1.11}
				|\rho(\x)|+|\eta(\x)| &\leq C(1+\|\x\|_\V^\beta)(1+\|\x\|_\H^\zeta),\\
				\label{1.12}
			|\kappa(\x)|+|\varkappa(\x)| &\leq C(1+\|\x\|_\H^\zeta),	
			\end{align} where $\rho$ and $\eta$  (resp. $\kappa$ and $\varkappa$) are two measurable functions from $\V$ (resp. $\H$) to $\R$.
			\item[(H.3)]	\textsl{(Coercivity)}. There exists a positive constant $L_\A$ such that for any $\x\in\V$ and a.e.  $t\in[0,T]$, 
			\begin{align}\label{1.13}
				\langle \A(t,\x),\x\rangle 
				\leq f(t)(1+\|\x\|_{\H}^2)-L_\A\|\x\|_{\V}^\beta.
			\end{align}
			\item[(H.4)] \textsl{(Growth)}. There exist non-negative constants  $\alpha$ and $C$ such that for any $\x\in\V$ and a.e. $t\in[0,T]$,
			\begin{align}\label{1.14}
				\|\A(t,\x)\|_{\V^*}^{\frac{\beta}{\beta-1}}\leq \big(f(t)+C\|\x\|_{\V}^\beta\big)\big(1+\|\x\|_{\H}^\alpha\big).
			\end{align} 
			\item[(H.5)]There exists a non-negative constant $K$ such that for any  $\x\in \V$ and a.e. $t\in[0,T]$, 
			\begin{align}\label{1.15}
				\|\s(\x)\|_{\L_2}^2\leq K(1+\|\x\|_{\H}^2). 
			\end{align}
		\end{itemize}
	\end{hypothesis}

In order to obtain the well-posedness of approximating system \eqref{1.6}, and the Wong-Zakai approximation of the system \eqref{1.1}, we require the following assumptions on $\sigma_i(\cdot)$ $(i\in\N)$ and $\wi{\Tr}_m(\cdot)$ which are more general (Hypothesis \ref{hyp2} (H.7) below) in comparison to the work \cite{TMRZ}.  The examples of noise given in \cite[Section 3]{TMRZ} are also applicable in our framework.
\begin{hypothesis}\label{hyp2}
	For each $i\in\N$, the mapping $\s_i$ is twice Fr\'echet differentiable with its second Fr\'echet derivative denoted by $\D^2\s_i:\H\to\mathcal{L}(\H,\mathcal{L}(\H,\H)) \cong \mathcal{L}(\H\times\H,\H)$, satisfies the following:
	\begin{itemize}
		\item[(H.6)] for any $M>0$, there exists a positive constant $C_M$ such that 
		\begin{align*}
			\sup_{i\in\N}\sup_{\|\x\|\leq M} \big\{\|\D\s_j(\x)\|_{\mathcal{L}(\H,\H)}\vee \|\s_i(\x)\|_\H\vee \|\D^2\s_i(\x)\|_{\mathcal{L}(\H\times\H,\H)}\big\} \leq C_M,\\ 
			\D\s_i^*\big|_\V :\V\to\V,\ \ 	\sup_{i\in\N}\sup_{\|\x\|\leq M} \|\D\s_i(\x)^*\y\|_\V \leq C_M\|\y\|_\V, \ \ \y\in\V,
		\end{align*}for any $m\in\N$, 
	\begin{align*}
		\lim_{m\to\infty}\sup_{\|\x\|_\H\leq M} \|\s(\x)-\s(\x)\circ \Pi_m\|_{\L_2}=0,
	\end{align*}where $\Pi_m$ denotes the orthogonal projection onto $\U_m:=\mathrm{span}\{\ee_1,\ee_2,\ldots,\ee_m\}$ in $\U$, that is, $\Pi_m\x=\sum\limits_{j=1}^m(\x,\ee_j)\ee_j,\ \x\in \U$ and $\D\s_i(\cdot)^*$ denotes the dual of the operator $\D\s_i(\cdot)$;
\item[(H.7)] there exists a positive constant $L$ such that for every $m\in\N$ and $\x,\x_1,\x_2\in\H$, we have 
\begin{align*}
	\|\wi{\Tr}_m(\x)\|_\H^2 &\leq L\big(1+\|\x\|_\H^2\big),\\ 
	\big(\wi{\Tr}_m(\x_2)-\wi{\Tr}_m(\x_1),\x_1-\x_2\big)&\leq \big(\kappa(\x_1)+\varkappa(\x_2)\big)\|\x_1-\x_2\|_\H^2, 
\end{align*}where $\kappa(\cdot)$ and $\varkappa(\cdot)$ are same as defined in \eqref{1.12}.
	\end{itemize}
\end{hypothesis}

Let us provide the definition of a probabilistically strong solution of system \eqref{1.1}.
\begin{definition}\label{def2.3}
	Let $(\Omega,\mathscr{F},\{\mathscr{F}_t\}_{t\geq 0},\P)$ be a stochastic basis. Then, \eqref{1.1} has a \emph{probabilistically strong solution} if and only if there exists a progressively measurable process $\Y:[0,T]\times\Omega\to\H, $ with paths 
	\begin{align*}
		\Y(\cdot)\in \C([0,T];\H)\cap\L^\beta(0,T;\V),\ \P\text{-a.s., }  \ for \ \ \beta\in(1,\infty), 
	\end{align*}and the following equality holds $\P$-a.s., in $\V^*$, for all $t\in[0,T]$, 
\begin{align*}
	\Y(t)=\Y(0)+\int_0^t\A(s,\Y(s))\d s+\int_0^t\sigma(\Y(s))\d \W(s).
\end{align*}
\end{definition}

The following theorem provides the well-posedness results of the system \eqref{1.1}, we refer readers to \cite[Theorem 2.6]{MRSSTZ} for the proof.
\begin{theorem}[{\cite{MRSSTZ}}]\label{thrm1}
	Assume that Hypothesis \ref{hyp1} holds and the embedding $\V\hookrightarrow \H$ is compact.  Then, for any initial data $\boldsymbol{y}_0\in \H$, there exists a 	\textsl{probabilistically strong solution} to the system \eqref{1.1}. Furthermore, for any $p\geq 2$, the following energy estimate holds:
	\begin{align}\label{1.16}
		\E\bigg[\sup_{t\in[0,T]}\|\Y(t)\|_\H^p\bigg]+\E\bigg[\int_0^T\|\Y(t)\|_\V^\beta\d t\bigg]^\frac{p}{2}\leq C\big(1+\|\y_0\|_{\H}^p\big).
	\end{align}
\end{theorem}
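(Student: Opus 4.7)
The plan is to establish existence via a Faedo-Galerkin scheme combined with tightness and a pseudo-monotonicity argument, and then to obtain the probabilistically strong solution through pathwise uniqueness and the Yamada-Watanabe theorem. First, using an orthonormal basis $\{\ee_j\}_{j\in\N}$ of $\H$ whose span is dense in $\V$, I would project the equation onto $\V_n := \mathrm{span}\{\ee_1,\dots,\ee_n\}$ to obtain a finite-dimensional SDE
\begin{equation*}
\d \Y_n(t) = \Pi_n \A(t,\Y_n(t))\,\d t + \Pi_n \s(\Y_n(t))\,\d \W(t), \qquad \Y_n(0)=\Pi_n\boldsymbol{y}_0,
\end{equation*}
where $\Pi_n$ is the $\H$-orthogonal projection onto $\V_n$. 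By hemicontinuity (H.1) and local monotonicity on bounded sets (the latter obtained by freezing $\rho,\eta$ on $\{\|\cdot\|_\V \le R\}$), this SDE admits a solution on $[0,T]$ through a standard truncation-and-removal procedure. Applying Itô's formula to $\|\Y_n\|_\H^p$ and combining coercivity (H.3), the linear growth (H.5) and the Burkholder--Davis--Gundy inequality, I would derive the uniform estimates
\begin{equation*}
\sup_n \E\Big[\sup_{t\in[0,T]}\|\Y_n(t)\|_\H^p\Big] + \sup_n \E\Big[\Big(\int_0^T\|\Y_n(t)\|_\V^\beta\,\d t\Big)^{p/2}\Big] < \infty,
\end{equation*}
together with a uniform bound on $\A(\cdot,\Y_n)$ in $\L^{\beta/(\beta-1)}([0,T]\times\Omega;\V^*)$ stemming from the growth estimate (H.4).

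Next, I would use the compact embedding $\V \hookrightarrow \H$ together with a fractional Sobolev time-regularity estimate for $\Y_n$ (splitting drift and stochastic contributions) to invoke an Aubin--Lions--Flandoli type criterion, yielding tightness of the laws of $\{\Y_n\}$ on, for instance, $\L^\beta(0,T;\H)\cap \C([0,T];\V^*)$. Prokhorov's theorem and Skorokhod's representation theorem then furnish a new probability space carrying copies $(\vi\Y_n,\vi\W_n)$ converging almost surely in this space to a limit $(\vi\Y,\vi\W)$. The main obstacle will be identifying the weak limit of $\A(\cdot,\vi\Y_n)$: because Hypothesis (H.2) is \emph{fully local}, with the right-hand side depending on both $\x_1$ and $\x_2$ through $\rho$ and $\eta$, the classical Minty monotonicity trick is unavailable. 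I would circumvent this via a pseudo-monotonicity argument: extract a weak limit $\overline{\A}\in \L^{\beta/(\beta-1)}([0,T]\times\Omega;\V^*)$ of $\A(\cdot,\vi\Y_n)$, verify that $\vi\Y$ satisfies the equation with $\overline{\A}$ in place of $\A(\cdot,\vi\Y)$, and prove the inequality $\liminf_{n\to\infty}\int_0^T \langle\A(s,\vi\Y_n),\vi\Y_n-\vi\Y\rangle\,\d s \ge 0$ by using (H.2) with a localizing stopping time that controls $\rho(\vi\Y_n)+\eta(\vi\Y)$, combined with the strong $\L^2(0,T;\H)$-convergence of $\vi\Y_n$ supplied by the compact embedding. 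Testing against $\vi\Y+\lambda(\v-\vi\Y)$ for arbitrary $\v\in\V$ and sending $\lambda\downarrow 0$ via hemicontinuity (H.1) then forces $\overline{\A}=\A(\cdot,\vi\Y)$, so $(\vi\Y,\vi\W)$ is a probabilistically weak solution.

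Finally, for pathwise uniqueness I would take two solutions $\Y,\Y'$ with the same initial datum, apply Itô's formula to $\|\Y(t)-\Y'(t)\|_\H^2$, and combine (H.2) with the Lipschitz-type bound (H.5) on $\s$. Since $\rho,\eta,\kappa,\varkappa$ can blow up in the $\V$-norm, the standard Gronwall argument requires an exponential weight of the form $\exp(-\int_0^t[f(s)+\rho(\Y(s))+\eta(\Y'(s))+\kappa(\Y(s))+\varkappa(\Y'(s))]\,\d s)$ (or, equivalently, a localizing stopping time in the $\V$-norm), whose exponent is almost surely integrable on $[0,T]$ thanks to the $\L^\beta(0,T;\V)$-moments established above. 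Taking expectations and removing the weight yields pathwise uniqueness. Combined with the probabilistically weak existence just constructed, the Yamada--Watanabe theorem upgrades this to a probabilistically strong solution, and the energy estimate \eqref{1.16} follows from lower semicontinuity of norms under weak convergence together with the uniform Galerkin bounds.
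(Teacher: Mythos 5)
The paper does not actually prove this theorem: it is imported from \cite[Theorem 2.6]{MRSSTZ}, and the only ``proof'' given is that citation. Your proposal nonetheless reproduces, correctly and in the right order, the strategy used both in that reference and in the paper's own proof of the more general Theorem \ref{thrm4.1}: Faedo--Galerkin approximation with uniform $p$-th moment and $\L^\beta(0,T;\V)$ estimates via It\^o's formula, coercivity and Burkholder--Davis--Gundy; tightness in $\C([0,T];\V^*)\cap\L^\beta(0,T;\H)$; Prokhorov and Skorokhod; identification of the weak limit of $\A(\cdot,\Y_n)$ by a pseudo-monotonicity argument (you correctly note that the classical Minty trick fails because \eqref{1.9} depends on both arguments through $\rho$ and $\eta$, and that one must localize with stopping times and exploit the strong $\L^2(0,T;\H)$ convergence); pathwise uniqueness with the exponential weight $\exp(-\int_0^t[f+\rho+\eta+\kappa+\varkappa]\,\d s)$; and finally Yamada--Watanabe. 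The one place you genuinely diverge is the tightness step: you propose a fractional-Sobolev time-regularity estimate combined with an Aubin--Lions--Flandoli compactness criterion, whereas the paper (Lemma \ref{lem4.3}) verifies Aldous' criterion for the real-valued processes $\langle\X_{\g,n}^m,\bphi\rangle$ together with Jakubowski's theorem for tightness in $\C([0,T];\V^*)$, and a time-translation estimate for tightness in $\L^\beta(0,T;\H)$. Both routes are standard and both work here; Aldous' criterion avoids estimating fractional time derivatives of the stochastic integral at the price of some stopping-time bookkeeping, while your route packages the compactness into a single embedding theorem. I see no gap in your argument.
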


\begin{remark}
	In the above result \cite[Theorem 2.6]{MRSSTZ}, the authors established the result for deterministic initial data, that is, $\y_0\in\H$. However the result holds true for the initial data $\y_0\in\L^p(\Omega;\H)$, for $p\geq 2$. 
\end{remark}

Under Hypotheses \ref{hyp1} and \ref{hyp2}, we obtain the following  well-posedness result for the approximating system \eqref{1.6}. 
	\begin{theorem}\label{thrm2}
		Assume that Hypotheses \ref{hyp1} and \ref{hyp2} hold. Then, for any initial data $\boldsymbol{y}_0\in\L^p(\Omega;\H)$ for $p>\max\big\{\frac{\beta}{\beta-1},2\big\}$, there exists a unique solution $\Y^m(\cdot)$ to the approximating system \eqref{1.6} with $\Y^m(0)=\boldsymbol{y}_0$, $\mathbb{P}$-a.s., and 
		\begin{align*}
			\Y^m(t)=\boldsymbol{y}_0+\int_0^t\A(s,\Y^m(s))\d s+\int_0^t \s(\Y^m(s))\dot{\W}^m(s)\d s-\frac{1}{2}\int_0^t\wi{\Tr}_m(\Y^m(s))\d s,\ \mathbb{P}\text{-a.s.},
		\end{align*}
in $\V^*$. 	Furthermore, for any $p>\max\big\{\frac{\beta}{\beta-1},2\big\},$ the following energy estimate holds:
	\begin{align}\label{1.17}
	\E\bigg[\sup_{t\in[0,T]}\|\Y^m(t)\|_\H^p\bigg]+\E\bigg[\int_0^T\|\Y^m(t)\|_\V^\beta\d t\bigg]\leq C_m\big(1+\E\big[\|\y_0\|_\H^p\big]\big).
	\end{align}
		\end{theorem}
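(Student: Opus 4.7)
The plan is to obtain Theorem \ref{thrm2} as a special case of the more general solvability result Theorem \ref{thrm4.1} for the system \eqref{4.4}, into which \eqref{1.6} embeds by an appropriate choice of coefficients. The four-stage program is: (i) Faedo-Galerkin approximation yielding a finite-dimensional system solvable on each mesh interval $[k\vp,(k+1)\vp)$; (ii) uniform $m$-independent a priori estimates; (iii) compactness via tightness together with Prokhorov and Skorokhod, combined with pseudo-monotonicity to identify the nonlinear limit; (iv) pathwise uniqueness plus Yamada-Watanabe to upgrade a probabilistically weak solution to the unique probabilistically strong one.

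First, choose a basis $\{\ee_i\}\subset\V$ diagonalizing a suitable compact operator so that $\V\hookrightarrow\H$ remains compact on the Galerkin spaces $\H_n=\mathrm{span}\{\ee_1,\ldots,\ee_n\}$. Since $\dot{\W}^m(t)$ is piecewise constant and finite-dimensional, on each $[k\vp,(k+1)\vp)$ the projected equation \eqref{1.6} reduces to a deterministic ODE on $\R^n$ whose coefficients are continuous by hemicontinuity (H.1) and Hypothesis \ref{hyp2}; local existence, together with the $m$-independent energy bound established below, gives a global Galerkin solution $\Y^{m,n}(\cdot)$ on $[0,T]$.

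Next, apply the deterministic chain rule to $\|\Y^{m,n}(t)\|_{\H}^p$: coercivity (H.3) produces the good term $-L_\A\int_0^t\|\Y^{m,n}\|_\V^\beta\d s$, and the main technical difficulty is to control the drift $(\s(\Y^{m,n})\dot{\W}^m,\Y^{m,n})$ together with the corrector $-\tfrac12\wi{\Tr}_m(\Y^{m,n})$. This is the heart of the proof: using the decomposition $\int_0^t=\sum_{k=0}^{\lfloor t/\vp\rfloor}\int_{k\vp}^{(k+1)\vp\wedge t}$ and the $\mathscr{F}_{k\vp}$-measurability of the increments $\dot{\beta}_i^m$ on each subinterval (cf.\ \eqref{1.3}), a second-order Taylor expansion in $\H$ using the Fr\'echet derivatives of $\s_i$ permitted by Hypothesis \ref{hyp2} converts $\s(\Y^{m,n})\dot{\W}^m-\tfrac12\wi{\Tr}_m(\Y^{m,n})$ into a sum of a martingale piece (to be handled by Burkholder-Davis-Gundy) and a remainder bounded via H.7. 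Combined with the growth bound \eqref{1.15}, Gronwall's inequality then yields
\begin{equation*}
\sup_{m,n}\E\Bigl[\sup_{t\in[0,T]}\|\Y^{m,n}(t)\|_\H^p+\int_0^T\|\Y^{m,n}(t)\|_\V^\beta\d t\Bigr]<\infty.
\end{equation*}

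Third, the a priori estimate, together with the growth condition (H.4) on $\A$ giving uniform $\L^{\beta/(\beta-1)}(0,T;\V^*)$ bounds, and the analogous bounds on the other drift components from (H.5) and (H.7), produces equicontinuity of $\Y^{m,n}$ in $\V^*$ via a fractional Sobolev-in-time estimate. Using compactness of $\V\hookrightarrow\H$, Aldous-type criteria give tightness of $\{\mathrm{Law}(\Y^{m,n})\}_n$ in $\L^\beta(0,T;\H)\cap\C([0,T];\V^*_w)$; Prokhorov and Skorokhod then furnish a version on a new probability space converging a.s.\ in these spaces. Identification of the weak-$\L^{\beta/(\beta-1)}$ limit of $\A(\cdot,\Y^{m,n})$ with $\A(\cdot,\Y^m)$ proceeds via the pseudo-monotonicity implied by (H.1)-(H.2), following the standard argument used in \cite{MRSSTZ,AKMTM4}; the piecewise-constant nature of $\dot{\W}^m$ makes passage to the limit in the noise-drift term straightforward. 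Finally, pathwise uniqueness follows by computing $\frac{\d}{\d t}\|\Y_1^m-\Y_2^m\|_\H^2$ for two solutions: the drift difference is controlled by (H.2), the corrector difference by the monotone estimate in (H.7), and the $[\s(\Y_1^m)-\s(\Y_2^m)]\dot{\W}^m$ term by \eqref{1.10} combined with the $\mathscr{F}_{k\vp}$-measurability argument on each mesh interval, yielding an exponential-weight Gronwall inequality in which the weights lie in $\L^1(0,T)$ thanks to \eqref{1.11}-\eqref{1.12} and \eqref{1.17}. Yamada-Watanabe (\cite[Theorem 2.1]{MRBSXZ}) then converts the weak solution into the unique probabilistically strong one, and the uniform bound \eqref{1.17} passes from the Galerkin level by Fatou and lower semicontinuity.
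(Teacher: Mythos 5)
Your overall architecture coincides with the paper's: Theorem \ref{thrm2} is obtained as the special case $\s_1=0$, $\s_2=\s$, $\s_3=0$, $\G=\frac{1}{2}\wi{\Tr}_m$ of the general system \eqref{4.4}, which is solved by Faedo--Galerkin approximation, uniform moment bounds (Lemma \ref{lem4.2}), tightness in $\C([0,T];\V^*)\cap\L^\beta(0,T;\H)$ via Aldous' criterion, Prokhorov--Skorokhod, identification of the drift limit by pseudo-monotonicity (Lemma \ref{lem4.7}), pathwise uniqueness with an exponential weight containing $\|\dot{\W}^m(s)\|_\U^2$, and Yamada--Watanabe. On all of those points your plan matches the paper.

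The one place where you genuinely deviate is the treatment of $(\s(\Y^{m,n})\dot{\W}^m,\Y^{m,n})$ in the $p$-th moment estimate, and as written that step does not close. You propose a second-order Taylor expansion of the $\s_i$ designed to cancel this drift against the corrector $-\frac{1}{2}\wi{\Tr}_m$. That cancellation is the mechanism of the Wong--Zakai \emph{convergence} proof (Theorem \ref{thrm3.3}), not of the well-posedness proof, and importing it here creates a circularity: the Taylor remainder is quadratic in the increment $\Y^{m,n}(s)-\Y^{m,n}((\lfloor\frac{s}{\vp}\rfloor-1)\vp)$, and the only way to control such increments (cf.\ the estimates \eqref{3.10}--\eqref{3.15}) is through the very moment and $\L^\beta(0,T;\V)$ bounds you are in the middle of establishing. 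Moreover, no cancellation is needed at this stage: Hypothesis \ref{hyp2} (H.7) gives the linear growth $\|\wi{\Tr}_m(\x)\|_\H^2\leq L(1+\|\x\|_\H^2)$, so the corrector is absorbed directly by Cauchy--Schwarz, Young and Gronwall (this is \eqref{4.14}). For the noise drift the paper instead substitutes $\dot{\W}^m(s)=\vp^{-1}\int_{(\lfloor\frac{s}{\vp}\rfloor-1)\vp}^{\lfloor\frac{s}{\vp}\rfloor\vp}\Pi_m\,\d\W(r)$, applies Fubini and the Burkholder--Davis--Gundy inequality to the resulting stochastic integral, and uses \eqref{1.15}; this yields a bound of the form $\e\,\E\big[\sup_t\|\cdot\|_\H^p\big]+C\,\E\int_0^{\cdot}\|\cdot\|_\H^p\,\d s$ with constants independent of $m$ and $n$ (see \eqref{4.12}), which is exactly what Gronwall needs. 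Replace your Taylor-expansion step by this argument and the rest of your outline goes through; the remaining inaccuracies (the finite-dimensional system is a random ODE/SDE whose solvability requires the local monotonicity of the full drift including $\s\dot{\W}^m$, not merely continuity, and tightness is proved in $\C([0,T];\V^*)$ with the strong topology) are cosmetic.
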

	
Note that the  additional condition of $p>\frac{\beta}{\beta-1}$ helps us to obtain the drifts $\A(\cdot), \ \s\dot{\W}^m(\cdot)$  and $\wi{\Tr}_m(\cdot)$ in the same space, see the proof of Theorem \ref{thrm4.1} (Section \ref{Seclem4.1}).  
	
	Next, we consider a general system that contains the system \eqref{1.6} as a spacial case and helps us to prove the topological support of the distribution of solutions to the system \eqref{1.1} as well. Let us consider the following general system: 
	\begin{equation}\label{4.4}
		\left\{
		\begin{aligned}
			\d \X_{\g}^m(t)&=\A(t,\X_{\g}^m(t))\d t+\s_1(\X_{\g}^m(t))\d \W(t)+\s_2(\X_{\g}^m(t))\dot{\W}^m(t)\d t\\&\quad +\s_3(\X_{\g}^m(t))\g(t)\d t-\G(\X_{\g}^m(t))\d t, \ \ t\in(0,T),\\ 
			\X_{\g}^m(0)&=\boldsymbol{y}_0,
		\end{aligned}
		\right.
	\end{equation}where for any fixed time $T>0$, the maps 
	\begin{align*}
		\A:[0,T]\times\V\to\V^*, \ \ 
		\s_1,\s_2,\s_3:\H\to \L_2(\U,\H), \ \text{ and } \ \G:\H\to\H,
	\end{align*}are progressively measurable and $\g\in\L^2(0,T;\U)$.
	
	If we choose $\s_1=0, \ \s_2=\s, \ \s_3=0$ and $\G=\frac{1}{2}\wi{\Tr}_m$, then the system \eqref{4.4} reduced to the system \eqref{1.6}. Thus, Theorem \ref{thrm2} will be a particular case of the following Theorem \ref{thrm4.1}. If we write the drift coefficient  in \eqref{1.6} as $\wi{\A}(\cdot)=\A(\cdot)+\s(\cdot)\dot{\W}^m(\cdot)-\frac{1}{2}\wi{\Tr}_m(\cdot)$, and the diffusion $\wi{\s}=0$, then the system \eqref{1.6} changes to \eqref{1.1}. Here, we would like to mention that we cannot apply Theorem \ref{thrm1} directly to solve the system \eqref{1.6}. Since 
	\begin{align*}
		\big(\s(\x)\dot{\W}^m(t),\x\big) \leq \sqrt{K}\sqrt{1+\|\x\|_\H^2}\|\x\|_\H\|\dot{\W}^m(t)\|_\U,
	\end{align*} 
and we cannot find a bound of $\|\dot{\W}^m(t)\|_\U$, for all $\omega\in\Omega$ and $t\in[0,T]$, that is,	Hypothesis \ref{hyp1} (condition (H.3), coercivity) does not hold for the new drift coefficients $\wi{\A}(\cdot)$.  
	
	Now, we discuss the well-posedness of the system \eqref{4.4} in the following theorem. The proof will be given in the next section which is based on similar arguments used in \cite[Lemma 4.1]{TMRZ}.
	
	\begin{theorem}\label{thrm4.1}
		Let $T>0$, $\g\in\L^2(0,T;\U)$ and $\boldsymbol{y}_0\in\L^p(\Omega;\H)$ for $p>\max\big\{\frac{\beta}{\beta-1},2\big\}$ in Hypothesis \ref{hyp1}. Assume that the coefficient $\A(\cdot)$ satisfies Hypothesis \ref{hyp1}, $\s_1(\cdot),\ \s_2(\cdot),\ \s_3(\cdot)$ satisfy the conditions in Hypothesis \ref{hyp1} (\eqref{1.11} $\&$ \eqref{1.15}) and $\G(\cdot)$ satisfies Hypothesis \ref{hyp2} (H.7) with $\G(\cdot)$ replacing $\wi{\Tr}_m(\cdot)$. Then, there exists a unique solution $\X_{\g}^m(\cdot)$ to the system \eqref{4.4} in the sense of Definition \ref{def2.3}. Furthermore, $\X_{\g}^m\in\C([0,T];\H),\ \P$-a.s., and 
		\begin{align}\label{4.5}
	\E\bigg[\sup_{t\in[0,T]}\|\X_{\g}^m(t)\|_\H^p\bigg]+\E\bigg[\int_0^T\|\X_{\g}^m(t)\|_\V^\beta\d t\bigg]^{\frac{p}{2}}\leq C_m\big(1+\E\big[\|\y_0\|_\H^p\big]\big).
		\end{align}
	\end{theorem}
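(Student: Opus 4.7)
\textbf{Proof proposal for Theorem \ref{thrm4.1}.} The plan is to follow the Faedo-Galerkin paradigm together with compactness/pseudo-monotonicity arguments, and then upgrade to a probabilistically strong solution via Yamada-Watanabe. Let $\{\ee_i\}_{i\in\N}$ be an orthonormal basis of $\H$ consisting of elements of $\V$ (obtained, for instance, as eigenfunctions of a suitable operator associated with the embedding $\V\hookrightarrow\H$), and let $P_n$ be the orthogonal projection onto $\H_n:=\mathrm{span}\{\ee_1,\ldots,\ee_n\}$. First I would consider the finite-dimensional SDE obtained by projecting \eqref{4.4} onto $\H_n$, which is an SDE with locally Lipschitz-type coefficients on a finite-dimensional space. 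Since $\dot{\W}^m(\cdot)$ is piecewise constant and bounded on each subinterval $[k\vp,(k+1)\vp]$ (by its very definition), and since $\G$ has linear growth by (H.7), the projected system has a unique continuous adapted solution $\X_{\g}^{m,n}(\cdot)$ on $[0,T]$; here standard SDE theory on $\H_n$ applies after routine truncation.

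Next I would derive uniform-in-$n$ a priori estimates by applying It\^o's formula to $\|\X_{\g}^{m,n}(t)\|_\H^p$ (with $p>\max\{\beta/(\beta-1),2\}$), using coercivity (H.3), the linear growth of $\sigma_1,\sigma_2,\sigma_3$ and $\G$, and absorbing the terms $(\s_2(\X_{\g}^{m,n})\dot{\W}^m,\X_{\g}^{m,n})$ via the bound $\|\dot{\W}^m(t)\|_\U\leq \vp^{-1}\sum_{i=1}^m|\beta_i(\lfloor t/\vp\rfloor\vp)-\beta_i((\lfloor t/\vp\rfloor-1)\vp)|$, which is a random variable with finite moments of every order (on each fixed interval, depending on $m$). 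Combining this with the Burkholder-Davis-Gundy inequality applied to the $\s_1$ stochastic integral and a Gronwall argument (the $\g$-term being handled by Cauchy-Schwarz with $\g\in\L^2(0,T;\U)$), I would obtain
\begin{align*}
\sup_{n\in\N}\Bigl\{\E\Bigl[\sup_{t\in[0,T]}\|\X_{\g}^{m,n}(t)\|_\H^p\Bigr]+\E\Bigl[\int_0^T\|\X_{\g}^{m,n}(t)\|_\V^\beta\d t\Bigr]^{p/2}\Bigr\}<\infty,
\end{align*}
uniformly in $n$ (with constants possibly depending on $m$ but independent of $n$); taking $n\to\infty$ (and later $m$) gives \eqref{4.5}. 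Using the growth condition (H.4), one also gets a uniform bound on $\A(\cdot,\X_{\g}^{m,n}(\cdot))$ in $\L^{\beta/(\beta-1)}([0,T]\times\Omega;\V^*)$.

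With these estimates in hand I would prove tightness of the laws of $\{\X_{\g}^{m,n}\}_n$ on a path space that combines weak convergence in $\L^\beta(0,T;\V)$ and strong convergence in $\L^2(0,T;\H)$ (e.g.\ via Aubin-Lions-type compactness, bounding time increments in $\V^*$ using the equation), together with the weak topology on $\C([0,T];\H)$. Then Prokhorov's theorem and Skorokhod's representation theorem produce, on a new probability space, a new sequence $\vi\X_{\g}^{m,n}$ with the same laws converging a.s.\ to a limit $\vi\X_{\g}^m$, together with a limiting Wiener process $\vi\W$. Passing to the limit in the linear terms is immediate from weak convergence; the nonlinear drift $\A$ is handled by the local monotonicity (H.2) together with pseudo-monotonicity in exactly the manner of \cite{MRSSTZ} and \cite{TMRZ}: using the minty-type trick, I would identify the weak limit of $\A(\cdot,\vi\X_{\g}^{m,n})$ with $\A(\cdot,\vi\X_{\g}^m)$, exploiting the almost-sure convergence in $\L^2(0,T;\H)$ and the fully local monotone structure \eqref{1.9}. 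Passing to the limit in $\s_1(\vi\X_{\g}^{m,n})\d\vi\W^n$ is classical (e.g.\ Bensoussan's lemma, after verifying (H.6)-type convergence of the diffusion), and the $\s_2\dot{\W}^m$, $\s_3\g$ and $\G$ terms pass by dominated convergence. This yields a probabilistically weak solution.

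The remaining step is pathwise uniqueness. Given two solutions $\X_{\g}^{m,(1)},\X_{\g}^{m,(2)}$, I would apply It\^o's formula to $\|\X_{\g}^{m,(1)}(t)-\X_{\g}^{m,(2)}(t)\|_\H^2$, use (H.2) on $\A$ together with the analogous bound on $\s_1-\s_1$ via \eqref{1.10}, estimate the $\s_2\dot{\W}^m$ difference by Cauchy-Schwarz picking up a factor $\|\dot{\W}^m(t)\|_\U$ (which is integrable since $m$ is fixed), estimate $\s_3(\cdot)\g$ by Young's inequality absorbing into the monotonicity bound, and handle $\G(\X^{(1)})-\G(\X^{(2)})$ via Hypothesis (H.7). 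After a stopping-time localization on the random Gronwall factor $f(t)+\rho(\X^{(1)})+\eta(\X^{(2)})+\kappa(\X^{(1)})+\varkappa(\X^{(2)})+\|\dot{\W}^m(t)\|_\U^2$, which is integrable by \eqref{1.11}-\eqref{1.12} and the uniform estimates already obtained, the stochastic Gronwall lemma gives pathwise uniqueness. The classical Yamada-Watanabe theorem \cite[Theorem 2.1]{MRBSXZ} then upgrades the weak solution to a unique probabilistically strong solution in $\C([0,T];\H)\cap\L^\beta(0,T;\V)$, $\P$-a.s. The main obstacle I anticipate is the identification of the limit of $\A(\cdot,\vi\X_{\g}^{m,n})$: the fully local monotone inequality \eqref{1.9} depends on both arguments and the correction controls $\rho,\eta$ have only polynomial growth in $\|\cdot\|_\V$, so one must carefully combine the $\L^\beta(0,T;\V)$-weak convergence with the $\L^2(0,T;\H)$-strong convergence and the uniform $p$-th moment bound (with $p$ large enough to dominate $\zeta$ via \eqref{1.11}) to justify the pseudo-monotonicity passage, exactly as in \cite{MRSSTZ}.
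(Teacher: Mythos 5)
Your overall architecture is exactly the paper's: Galerkin projection, It\^o estimates for $\|\cdot\|_\H^p$, tightness via Aldous-type increment bounds, Prokhorov and Skorokhod, identification of the drift limit through the pseudo-monotonicity machinery of \cite{MRSSTZ}, pathwise uniqueness with an exponential Gronwall weight, and Yamada--Watanabe. On all of those points your plan matches the paper's proof (Lemmas \ref{lem4.2}--\ref{lem4.7}, Theorems \ref{thrm2.8}--\ref{thrm2.9}).

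There is, however, one concrete gap: your treatment of the drift term $\int_0^t\|\X\|_\H^{p-2}(\s_2(\X)\dot{\W}^m,\X)\,\d s$ in the a priori estimate. You propose to bound $\|\dot{\W}^m(t)\|_\U$ directly, noting it has finite moments of every order ``depending on $m$,'' and then to conclude \eqref{4.5} by ``taking $n\to\infty$ (and later $m$).'' This cannot work as stated: $\E[\|\dot{\W}^m(t)\|_\U^2]=m\vp^{-1}=m2^m/T$ blows up in $m$, so the Gronwall constant you obtain diverges, while \eqref{4.5} is a supremum over $m$ (and this $m$-uniformity is essential downstream, for the stopping times $\tau^2_{M,m}$ and the Wong--Zakai limit). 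One cannot recover a $\sup_m$ bound from $m$-dependent constants by any limiting procedure. The missing idea is the paper's rewriting of this term (cf.\ Remark \ref{Remark4.2} and the estimate of $J^3_{m,n}$ in Lemma \ref{lem4.2}): replace $\dot{\W}^m(s)$ by $\vp^{-1}\int_{(\lfloor s/\vp\rfloor-1)\vp}^{\lfloor s/\vp\rfloor\vp}\Pi_m\,\d\W(l)$, use Fubini to convert the Lebesgue integral in $s$ into a stochastic integral in $l$, and then apply Burkholder--Davis--Gundy; the factor $\vp^{-1}$ is cancelled by the length-$\vp$ window in the quadratic variation, yielding a bound independent of $m$. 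The same device is needed in the tightness estimates (term $III_n$) and in the increment bounds $I_2$, $J_2$. With that correction the rest of your argument goes through as in the paper.
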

	\section{Proof of Theorem \ref{thrm4.1}}\label{Seclem4.1}\setcounter{equation}{0}
	The proof is based on a standard Feado-Galerkin approximation technique. Let $\{\bphi_1,\bphi_2,\ldots\}$ be an orthonormal basis of $\H$ and orthogonal  in  $\V$, set $\H_n:=\mathrm{span}\{\bphi_1,\bphi_2,\ldots,\bphi_n\}$. We define a projection  $\PP_n:\V^*\to\H_n$ by 
	\begin{align}\label{4.6}
		\PP_n\x=\sum_{i=1}^n\langle \x,\bphi_i\rangle \bphi_i, \ \ \x\in\V^*.
	\end{align}
	Clearly, the restriction of this projection denoted by $\PP_n\big|_\H$ is just the orthogonal projection of $\H$ onto $\H_n$. Since, $\{\ee_j\}_{j\in\N}$ is an orthonormal basis of the Hilbert space $\U$, recall the projection of $\U$ onto $\U_n$ 
	\begin{align}\label{4.006}
		\Pi_n \W(t)=\sum_{j=1}^n\langle \W(t),\ee_j\rangle_{\U} \ee_j.
	\end{align}For $\g\in\L^2(0,T;\U)$, we consider the following system in the finite-dimensional space $\H_n$, for any $n\in\N$:
\begin{equation}\label{4.7}
			\left\{
			\begin{aligned}
				\d \X_{\g,n}^m(t)&=\A^n(t,\X_{\g,n}^m(t))\d t+\s_1^n(\X_{\g,n}^m(t))\Pi_n\d \W(t)+\s_2^n(\X_{\g,n}^m(t))\dot{\W}^m(t)\d t\\&\quad +\s_3^n(\X_{\g,n}^m(t))\g(t)\d t-\G^n(\X_{\g,n}^m(t))\d t, \ \ t\in(0,T),\\ 
				\X_{\g,n}^m(0)&=\PP_n\boldsymbol{y}_0=:\boldsymbol{y}_0^n,
			\end{aligned}
			\right.
	\end{equation}
	where $\A^n(t,\cdot):=\PP_n\A(t,\PP_n(\cdot))$, similar for the other terms, $\PP_n$ and $\Pi_n$ are projections defined in \eqref{4.6} and \eqref{4.006}, respectively.
	We can also write the above system as for all $t\in[0,T]$, $\P$-a.s.,
	\begin{align}\label{4.007}\nonumber
		\X_{\g,n}^m(t)&=\PP_n\boldsymbol{y}_0+ \int_0^t \bigg\{\A^n(s,\X_{\g,n}^m(s))+\s_2^n(\X_{\g,n}^m(s))\dot{\W}^m(s)+\s_3^n(\X_{\g,n}^m(s))\g(s)\\&\qquad -\G^n(\X_{\g,n}^m(s))\bigg\}\d s+\int_0^t\s_1^n(\X_{\g,n}^m(s))\Pi_n\d \W(s).
	\end{align}For any $t\in[0,T]$, and $\E\big[\|\dot{\W}^m(t)\|_\U^2\big]=\frac{m}{\vp}$ and 
\begin{align*}
	\big((\s_2(\u)-\s_2(\v))\dot{\W}^m(t),\u-\v\big)&\leq \|\s_2(\u)-\s_2(\v)\|_{\L_2}\|\dot{\W}^m(t)\|_\U\|\u-\v\|_\H \\&\leq \big(\kappa(\u)+\varkappa(\v)\big)^\frac{1}{2}\|\dot{\W}^m(t)\|_\U\|\u-\v\|_\H^2\\&\leq 
	\big(\kappa(\u)+\varkappa(\v)+\|\dot{\W}^m(t)\|_\U^2\big)\|\u-\v\|_\H^2,
\end{align*}for $\u,\v\in\H_m$, where we have used Hypothesis \ref{hyp1} (H.2) and Young's inequality.
The existence of a unique solution of the finite-dimensional system \eqref{4.7} up to a stopping time $0<\tau^m\leq T$ has been discussed in \cite[Theorem 3.1.1]{WLMR2} (also see \cite[Theorem 3.1.1]{CPMR} and \cite[Theorem 2.6]{MRSSTZ}).
That is, the unique solution $\X_{\g,n}^m\in \mathrm{C}([0,\tau^m];\mathbb{H}_n),$ $\mathbb{P}$-a.s.,  and our aim is to show that $\tau^m=T$, $\mathbb{P}$-a.s. 

Since $\E\big[\|\dot{\W}^m(t)\|_\U^2\big]=\frac{m}{\vp}<\infty$, for all $t\in[0,T]$ and  each $m\in\mathbb{N}$, we first consider the following truncated problem: 
		\begin{equation}\label{3p5}
			\left\{
			\begin{aligned}
				\d \X_{\g,n}^{m,N}(t)&=\A^n(t,\X_{\g,n}^{m,N}(t))\d t+\s_1^n(\X_{\g,n}^{m,N}(t))\Pi_n\d \W(t)+\Phi_N(\|\dot{\W}^m(t)\|_{\U})\s_2^n(\X_{\g,n}^{m,N}(t))\dot{\W}^{m}(t)\d t\\&\quad +\s_3^n(\X_{\g,n}^{m,N}(t))\g(t)\d t-\G^n(\X_{\g,n}^{m,N}(t))\d t, \ \ t\in(0,T),\\ 
				\X_{\g,n}^{m,N}(0)&=\PP_n\boldsymbol{y}_0=:\boldsymbol{y}_0^n,
			\end{aligned}
			\right.
		\end{equation}
for $N\in\mathbb{N}$, where 
\begin{align}\label{TC}
	\Phi_N(y)=\left\{\begin{array}{cl}1,&0\leq y\leq N,\\ N+1-y,& N<y\leq N+1,\\
		0,&y\geq N+1. 
	\end{array}\right.
	\end{align}
As discussed in the previous case, there exists a unique local solution $\X_{\g,n}^{m,N}\in \mathrm{C}([0,\tau];\mathbb{H}_n),$ $\mathbb{P}$-a.s. to the problem \eqref{3p5}. For the above truncation method we are referring to the work \cite{JUK}.  
Let us first show that $\tau=T$, $\P$-a.s. The following energy estimate will help us to obtain this result.
	\begin{lemma}\label{lem4.2}
		Under the assumptions of Theorem \ref{thrm4.1}, there exist constants $C_p, C_{p,K,L},C_{p,K,L,T,N}>0$ such that 
	\begin{align}\label{4p8}\nonumber
			&\sup_{n,m\in\N}\E\bigg[\sup_{t\in[0,T]}\|\X_{\g,n}^{m,N}(t)\|^p_\H+\int_0^T\|\X_{\g,n}^{m,N}(t)\|_\H^{p-2}\|\X_{\g,n}^{m,N}(t)\|_\V^\beta\d t\bigg]\nonumber\\&\qquad+	\sup_{n,m\in\N}\E\bigg[\int_0^T\|\X_{\g,n}^{m,N}(t)\|_\V^\beta\d t\bigg]^{\frac{p}{2}}\nonumber \\&\leq \bigg(\E\big[\|\boldsymbol{y}_0\|_\H^p\big]+C_{p,K,L,T,N}+C_p\int_0^Tf(t)\d t\bigg)\nonumber\\&\qquad\times\exp\bigg\{C_{p,K,L}\int_0^T\big(1+f(t)+\|\g(t)\|_\U^2\big)\d t+C_{p,K,T,N}\bigg\}.
		\end{align}
	\end{lemma}
	\begin{proof}
		Note that 
		\begin{align*}
			\langle \A^n(t,\x),\y\rangle =\langle \A(t,\x),\y\rangle,\ \text{ for all }\  \x,\y\in\H_n.
		\end{align*}Similar equalities holds for the coefficients $\s_2^n(\cdot), \ \s_3^n(\cdot)$ and $\G^n(\cdot)$. 
		
		\vspace{2mm}
		\noindent
		\textbf{Step 1:} Let us first define the following sequence of stopping times: for $M\in\mathbb{N}$
		\begin{align}
			\tau_n^M&:=T\wedge \inf\left\{t\geq 0:\|\X_{\g,n}^{m,N}(t)\|_\H^2>M\right\}.\label{36}
		\end{align}
Applying finite-dimensional It\^o's formula to the process $\|\X_{\g,n}^{m,N}(\cdot)\|_\H^p$ (cf. \cite[Theorem 2.1]{IGDS}), we find  for all $t\in[0,\tau_{n}^{M}]$, $\mathbb{P}$-a.s.,
		\begin{align}\label{4.9}\nonumber
			\|\X_{\g,n}^{m,N}(t)\|_\H^p &\leq  \|\boldsymbol{y}_0\|_\H^p+ p\int_0^t\|\X_{\g,n}^{m,N}(s)\|_\H^{p-2}\langle \A(s,\X_{\g,n}^{m,N}(s)),\X_{\g,n}^{m,N}(s)\rangle \d s\\&\nonumber\quad + p\int_0^t\|\X_{\g,n}^{m,N}(s)\|_\H^{p-2}\big(\s_1^n(\X_{\g,n}^{m,N}(s))\Pi_n\d\W(s),\X_{\g,n}^{m,N}(s)\big) \d s\\&\nonumber\quad +p\int_0^t\|\X_{\g,n}^{m,N}(s)\|_\H^{p-2}\big(\Phi_N(\|\dot{\W}^m(s)\|_\U)\s_2^n(\X_{\g,n}^{m,N}(s))\dot{\W}^m(s),\X_{\g,n}^{m,N}(s)\big) \d s\\&\nonumber\quad +p\int_0^t\|\X_{\g,n}^{m,N}(s)\|_\H^{p-2}\big(\s_3^n(\X_{\g,n}^{m,N}(s))\g(s),\X_{\g,n}^{m,N}(s)\big)\d s \\&\nonumber\quad - p\int_0^t\|\X_{\g,n}^{m,N}(s)\|_\H^{p-2}\big(\G^n(\X_{\g,n}^{m,N}(s)),\X_{\g,n}^{m,N}(s)\big)\d s\\&\nonumber\quad 
			+\frac{p}{2}\int_0^t\|\X_{\g,n}^{m,N}(s)\|_\H^{p-2}\|\s_1^n(\X_{\g,n}^{m,N}(s))\Pi_n\|_{\L_2}^2\d s\\&\nonumber\quad +\frac{p(p-2)}{2} \int_0^t\|\X_{\g,n}^{m,N}(s)\|_\H^{p-4}\|\left(\s_1^n(\X_{\g,n}^{m,N}(s))\Pi_n\right)^*\X_{\g,n}^{m,N}(s)\|_\U^2\d s\\& =: \|\boldsymbol{y}_0\|_{\H}^p+\sum_{j=1}^7 J_{m,n}^j(t),
		\end{align}
	where we have used the fact that $\|\PP_n\boldsymbol{y}_0\|_\H=\|\boldsymbol{y}_0^n\|_\H\leq \|\boldsymbol{y}_0\|_\H$. 
		
		Let us consider the term $J_{m,n}^1(\cdot)$ and estimate it using Hypothesis \ref{hyp1} (H.3), H\"older's and Young's inequalities  as
		\begin{align}\label{4.10}
				|J_{m,n}^1(t)|&\nonumber\leq \frac{p}{2}\int_0^t\|\X_{\g,n}^{m,N}(s)\|_\H^{p-2}\big[f(s)\big(1+\|\X_{\g,n}^{m,N}(s)\|_\H^2\big)-L_\A\|\X_{\g,n}^{m,N}(s)\|_\V^\beta\big]\d s\\&\leq -\frac{p L_\A}{2}\int_0^t\|\X_{\g,n}^{m,N}(s)\|_\H^{p-2}\|\X_{\g,n}^{m,N}(s)\|_\V^\beta\d s\nonumber\\&\quad+C_p\int_0^tf(s)\d s+C_p\int_0^tf(s)\|\X_{\g,n}^{m,N}(s)\|_\H^p\d s.
		\end{align}
	We consider the term $\E\bigg[\sup\limits_{t\in[0,\tau^M_n]}\big|J_{m,n}^2(t)\big|\bigg]$ and estimate it using Burkholder-Davis-Gundy's (BDG) inequality (see \cite[Theorem 1.1]{DLB}), Hypothesis \ref{hyp1} (H.5), H\"older's and Young's inequalities as 
		\begin{align}\label{4.11}\nonumber
			&\E\bigg[\sup_{t\in[0,\tau^M_n]}\big|J_{m,n}^2(t)\big|\bigg]\\&\nonumber\leq C_p \E\bigg[\bigg(\int_0^{\tau^M_n}\|\X_{\g,n}^{m,N}(s)\|_\H^{2p-2}\|\s_1^n(\X_{\g,n}^{m,N}(s))\|_{\L_2}^2\d s \bigg)^{\frac{1}{2}}\bigg] \\&\nonumber \leq C_p \sqrt{K} \E\bigg[\bigg(\sup_{s\in[0,\tau^M_n]} \|\X_{\g,n}^{m,N}(s)\|_\H^p\int_0^{\tau^M_n}\|\X_{\g,n}^{m,N}(s)\|_\H^{p-2}\|\s_1^n(\X_{\g,n}^{m,N}(s))\|_{\L_2}^2\d s\bigg)^\frac{1}{2}\bigg]	 \\&\nonumber\leq \e\E\bigg[\sup_{s\in[0,\tau^M_n]} \|\X_{\g,n}^{m,N}(s)\|^p\bigg]+C_{\e,p,K}\E\bigg[\int_0^{\tau^M_n}\|\X_{\g,n}^{m,N}(s)\|_\H^{p-2}\big(1+\|\X_{\g,n}^{m,N}(s)\|_\H^2\big)\d s\bigg]
			\\&\leq \e \E\bigg[\sup_{s\in[0,\tau^M_n]}\|\X_{\g,n}^{m,N}(s)\|_\H^p\bigg]+C_{\e,p,K,T}+C_{\e,p,K}\E\bigg[\int_0^{\tau^M_n}\|\X_{\g,n}^{m,N}(s)\|_\H^p\d s\bigg],
		\end{align}where $\e>0$ is small enough. 
Next, we consider the term  $\E\bigg[\sup\limits_{s\in[0,\tau_n^M]}\big|J_{m,n}^3(s)\big|\bigg]$ and estimating it using the definition of truncation $\Phi_N(\cdot)$ \eqref{TC}, Hypothesis \ref{hyp1} (H.5), and Young's inequality as
	\begin{align*}
		\E\bigg[\sup_{s\in[0,\tau_n^M]}\big|J_{m,n}^3(s)\big|\bigg] &\leq \E\bigg[\int_0^{\tau_n^M}\Phi_N(\|\dot{\W}^m(s)\|_\U)\|\X_{\g,n}^{m,N}(s)\|_\H^{p-1}\|\s_2^n(\X_{\g,n}^{m,N}(s))\dot{\W}^m(s)\|_\H\d s\bigg| \bigg]  \\& \leq \E\bigg[\int_{0}^{\tau_n^M}\Phi_N(\|\dot{\W}^m(s)\|_\U)\|\X_{\g,n}^{m,N}(s)\|_\H^{p-1}\|\s_2^n(\X_{\g,n}^{m,N}(s))\|_{\L_2}\|\dot{\W}^m(s)\|_\U\d s\bigg]
	 \\& \leq C_{K,N}\E\bigg[\int_{0}^{\tau_n^M}\|\X_{\g,n}^{m,N}(s)\|_\H^{p-1}\big(1+\|\X_{\g,n}^{m,N}(s)\|_{\H}\big)\d s\bigg]\\&\leq 
	 C_{p,K,N}\bigg\{T+\E\bigg[\int_{0}^{\tau_n^M}\|\X_{\g,n}^{m,N}(s)\|_\H^{p}\d s\bigg]\bigg\}.
	\end{align*}
	Let us consider the term $J_{m,n}^4(\cdot)$, and estimate it using Hypothesis \ref{hyp1} (H.5) and Young's inequality as 
		\begin{align}\label{4.13}\nonumber
			|J_{m,n}^4(t)|& \leq p\sqrt{K}\int_0^t\|\X_{\g,n}^{m,N}(s)\|_\H^{p-1}\big(1+\|\X_{\g,n}^{m,N}(s)\|_\H\big)\|\g(s)\|_\U\d s \\&\leq C_{p,K,T} +C_{p,K}\int_0^t\|\g(s)\|_\U^2\|\X_{\g,n}^{m,N}(s)\|_\H^p\d s.
		\end{align}  We consider the term $J_{m,n}^5(\cdot)$, and estimate it using Hypothesis \ref{hyp2} (H.7) and Young's inequality as
		\begin{align}\label{4.14}
			|J_{m,n}^5(t)| \leq \e\sup_{t\in[0,\tau^M_n]}\|\X_{\g,n}^{m,N}(s)\|_\H^p+C_{\e,p,L,T}+C_{\e,p,L}\int_0^{\tau^M_n}\|\X_{\g,n}^{m,N}(s)\|_\H^p\d s.
		\end{align}
			For the remaining two terms $J_{m,n}^6(\cdot)$ and $J_{m,n}^7(\cdot),$ we use Hypothesis \ref{hyp1} (H.5) and estimate it as 
		\begin{align}\label{4.15}
			|J_{m,n}^6(t)|+|J_{m,n}^7(t)| \leq C_{p,K,T}+C_{p,K}\int_0^{\tau^M_n}\|\X_{\g,n}^{m,N}(s)\|_\H^p\d s.
		\end{align}

	Substituting \eqref{4.10}-\eqref{4.15} into \eqref{4.9},  and  applying Gronwall's inequality with a proper choice of $\e>0$, we obtain
	\begin{align}\label{4.015}\nonumber
			&\sup_{n,m\in\N}\E\bigg[\sup_{t\in[0,\tau^M_n]}\|\X_{\g,n}^{m,N}(t)\|^p_\H+\int_0^{\tau^M_n}\|\X_{\g,n}^{m,N}(t)\|_\H^{p-2}\|\X_{\g,n}^{m,N}(t)\|_\V^\beta\d t\bigg] \\&\leq \bigg(\E\big[\|\boldsymbol{y}_0\|_\H^p\big]+C_{p,K,L,T,N}+C_p\int_0^Tf(t)\d t\bigg)\nonumber\\&\quad\times\exp\bigg\{C_{p,K,L}\int_0^T\big(1+f(t)+\|\g(t)\|_\U^2\big)\d t+C_{p,K,T,N}\bigg\}.
		\end{align}
Since	$\tau_n^M\to T,$ $\mathbb{P}$-a.s., as $M\to\infty$, taking $M\to\infty$ in \eqref{4.015} and using the monotone convergence theorem, one can deduce the first part of \eqref{4p8}. 
		
		\vskip 2mm
		\noindent
		\textbf{Step 2:} Applying finite-dimensional It\^o's formula to the process $\|\X_{\g,n}^{m,N}(\cdot)\|_\H^2$, one can show that (cf. \cite[Lemma 2.10]{MRSSTZ} or \cite[Lemma 3.1]{AKMTM4})
		\begin{align}\label{4.0015}
			\sup_{n,m\in\N}\E\bigg[\int_0^T\|\X_{\g,n}^{m,N}(s)\|_\V^\beta\d s\bigg]^{\frac{p}{2}}\leq C<\infty,
		\end{align}where the constant $C=C_{p,K,T,L,N, \y_0,f}$. Combining \eqref{4.015} and the above inequality, we obtain the required result \eqref{4p8}.
	\end{proof}
 Let us now define a sequence of stopping times
	\begin{align}\label{3p17}
		\tau_N^m:=\left\{\begin{array}{l}\inf\left\{t\geq 0:\|\dot{\W}^m(t)\|_{\U}\geq N\right\},\\
		\infty,\ \text{ if the set is empty.}\end{array}\right.
		\end{align}
	By the uniqueness of solutions of the problem \eqref{3p5}, for $3\leq N_1\leq N_2$, we find 
	\begin{align*}
		\X_{\g,n}^{m,N_1}(t)=\X_{\g,n}^{m,N_2}(t), \ \text{ for all }\ t\in[0,\tau^m_{N_1}\wedge\tau^m_{N_2}], \ \P\text{-a.s.,}
		\end{align*}
	so that 
	\begin{align*}
		\tau^m_{N_1}\leq \tau^m_{N_2}, \ \P\text{-a.s.}
		\end{align*}
	Let us now define 
	\begin{align*}
		\tau^m(\omega)=\lim_{N\to\infty}\tau^m_N(\omega), \ \text{ for a.a. }\ \omega\in\Omega, 
		\end{align*}
	so that 
	\begin{align*}
		\X_{\g,n}^m(t)=\left\{\begin{array}{cl}
			\lim\limits_{N\to\infty}\X_{\g,n}^{n,N}(t),&\text{ for }\ 0\leq t<\tau^m,\\
			0,&\text{ for }\ t\geq\tau^m,
		\end{array}\right.
	\end{align*}
by the uniqueness of the solutions to the system \eqref{4.7}.  Finally, we show that
\begin{align}\label{global}
\mathbb{P}\{\omega\in\Omega:\tau^m(\omega)=T\}=1. 
\end{align} 
We know that 
\begin{align*}
	\E\left[\chi_{\{\tau^m_N<t\}}\right] =\P \left\{\omega\in\Omega: \tau^m_N(\omega)<t \right\},
	\end{align*}
	and using the definition of the stopping time $\tau^m_N$ defined in \eqref{3p17}, we have 
	\begin{align}\label{3p18}
		\E\left[\|\dot{\W}^m(t)\|_{\U}^2\right]&=\E\left[\|\dot{\W}^m(t)\|_{\U}^2\chi_{\{\tau^m_N<t\}}\right]+\E\left[\|\dot{\W}^m(t)\|_{\U}^2\chi_{\{\tau^m_N\geq t\}}\right]\nonumber\\&\geq \E\left[\|\dot{\W}^m(t)\|_{\U}^2\chi_{\{\tau^m_N<t\}}\right]\geq N^2\P \left\{\omega\in\Omega: \tau^m_N(\omega)<t \right\}. 
	\end{align}
Using  \eqref{3p18} and $\E\big[\|\dot{\W}^m(t)\|_\U^2\big]=\frac{m}{\vp}=\frac{m2^m}{T}<\infty$, we find for each $m\in\mathbb{N}$ and any $t\in[0,T]$
\begin{align*}
	\P\{\omega\in\Omega:\tau^m(\omega)<t\}&\leq \P\{\omega\in\Omega:\tau^m_N(\omega)<t\}\nonumber\\&\leq\frac{1}{N^2}\E\left[\|\dot{\W}^m(t)\|_{\U}^2\right]=\frac{m\vp^{-1}}{N^2}\to 0,\ \text{ as }\ N\to\infty,
	\end{align*}
so that \eqref{global} follows. By \eqref{4p8} and a contradiction argument to the fact that  $\mathbb{P}\{\omega\in\Omega:\tau^m(\omega)=T\}=1,$ we further have
	\begin{align}\label{4.8}
	&\sup_{n\in\N}\E\bigg[\sup_{t\in[0,T]}\|\X_{\g,n}^m(t)\|^p_\H+\int_0^T\|\X_{\g,n}^m(t)\|_\H^{p-2}\|\X_{\g,n}^m(t)\|_\V^\beta\d t\bigg]\nonumber\\&\quad+	\sup_{n\in\N}\E\bigg[\int_0^T\|\X_{\g,n}^m(t)\|_\V^\beta\d t\bigg]^{\frac{p}{2}}\leq C_m<\infty,
	\end{align}
for each $m\in\mathbb{N}$.

	Let us set 
	\begin{align*}
		&\mathcal{M}=\L^\beta(\Omega;\L^\beta(0,T;\V)), \  	\mathcal{M}^*=\L^\frac{\beta}{\beta-1}(\Omega;\L^\frac{\beta}{\beta-1}(0,T;\V^*)), \text{ and } \mathcal{H}=\L^2(\Omega;\L^2(0,T;\L_2(\U,\H))).
	\end{align*}By the estimate \eqref{4.8} and Hypothesis \ref{hyp2}, for all $n\in\N$, we have 
	\begin{align*}
		\|\X_{\g,n}^m(\cdot)\|_{\mathcal{M}}+\|\A(\cdot,\X_{\g,n}^m(\cdot))\|_{\mathcal{M}^*}\leq C_m<\infty.
	\end{align*}
	Let $\lambda:=\frac{\beta}{p(\beta-1)}<1$, since $p>\frac{\beta}{\beta-1}$. Using  Hypothesis \ref{hyp1} (H.5) and Young's inequality,  for $t\in[0,T],$ we deduce 
	\begin{align}\label{4.16}\nonumber
		\|\s_2(\x)\dot{\W}^m(t)\|_\H^{\frac{\beta}{\beta-1}}&\leq\big( \lambda \|\s_2(\x)\|_{\L_2}^{\frac{1}{\lambda}}+C_\lambda  \|\dot{\W}^m(t)\|_\U^\frac{1}{1-\lambda}\big)^{\frac{\beta}{\beta-1}} \\&\nonumber\leq 
		2^{\frac{1}{\beta-1}}\left[\lambda^{\frac{\beta}{\beta-1}}K^{\frac{\beta}{2\lambda(\beta-1)}}\big(1+\|\x\|_\H^2\big)^\frac{\beta}{2\lambda(\beta-1)}+C_\lambda ^{\frac{\beta}{\beta-1}}\|\dot{\W}^m(t)\|_\U^\frac{\beta}{(\beta-1)(1-\lambda)}\right]\\&\leq C_{\lambda,\beta,K}\left(1+\|\x\|_\H^p+\|\dot{\W}^m(t)\|_\U^\frac{\beta p}{\beta p-\beta-p}\right),
	\end{align}for all $\x\in \H$, where we have used the following inequality $$ (a+b)^{\frac{\beta}{\beta-1}} \leq 2^\frac{1}{\beta-1}\big(a^{\frac{\beta}{\beta-1}}+b^{\frac{\beta}{\beta-1}}\big), \ \text{ for }\ a,b\geq0.$$
	
	Recalling from \eqref{1.3} that the random variables $\|\dot{\W}^m(l\vp)\|_{\U}$, (for $l=1,2,\ldots,2^m$) are independent centered Gaussian random variables with $\E\big[\|\dot{\W}^m(l\vp)\|_\U^2\big]=m\vp^{-1}$. Therefore, there exists a positive constant $C_{\beta,p}$ such that
	\begin{align}\label{4.17}\nonumber
		\E\bigg[\int_0^T\|\dot{\W}^m(t)\|_\U^\frac{\beta p}{\beta p-\beta-p}\d t\bigg]&=\sum_{l=1}^{2^m}\vp\E\big[\|\dot{\W}^m(l\vp)\|_\U^\frac{\beta p}{\beta p-\beta-p}\big]\\&\nonumber=\vp C_{\beta,p} \sum_{l=1}^{2^m}\left(\E\big[\|\dot{\W}^m(l\vp)\|_\U^2\big]\right)^\frac{\beta p}{2(\beta p-\beta-p)}\\&=2^m\vp\left(\frac{m}{\vp}\right)^\frac{\beta p}{2(\beta p-\beta-p)}C_{\beta,p}.
	\end{align}Using Hypothesis \ref{hyp2} (H.7), we have 
	\begin{align}\label{4.18}
		\|\G(\x)\|_\H^\frac{\beta}{\beta-1} \leq L\big(1+\|\x\|_\H^2\big)^\frac{\beta}{2(\beta-1)}, \ \ \text{ for any } \ \ \x\in\H.
	\end{align}Let us assume $C_1$ be the positive constant such that $\|\cdot\|_{\V^*}\leq C_1\|\cdot\|_\H$ (since $\H\hookrightarrow\V^*$), again by the assumption $p>\frac{\beta}{\beta-1}$ and \eqref{4.8}, for each  $m\in \mathbb{N}$, we arrive at 
	\begin{align*}
		\|\s_2(\X_{\g,n}^m(\cdot))\dot{\W}^m(\cdot)+\G(\X_{\g,n}^m(\cdot))\|_{\mathcal{M}^*}\leq C_m <\infty, \  \text{uniformly for }   \ n\in\N.
	\end{align*}

Our next aim is to establish the tightness of the laws of $\{\X_{\g,n}^m\}_{n\in\N}$, for each $m\in\N$ in the space $\C([0,T];\V^*)\cap\L^\beta(0,T;\H)$, for $\beta\in(1,\infty)$.
	
	Let us define a sequence of stopping times for $N\in\mathbb{N}$
	\begin{align}\label{4.19}
		\tau_n^N:=T\wedge \inf\left\{t\geq0: \|\X_{\g,n}^m(t)\|_\H^2>N\right\}\wedge \inf\left\{t\geq0:\int_0^t \|\X_{\g,n}^m(s)\|_\V^\beta\d s>N\right\},
	\end{align}
with the convention that infimum over an empty set  is infinite. By Markov's inequality and \eqref{4.8}, we have 
	\begin{align}\label{4.20}
		\lim_{N\to\infty}\sup_{n\in\N}\P\big\{\tau_n^N<T\big\}=0.
	\end{align}
	\begin{lemma}\label{lem4.3}
		For each $m\in\N$, the laws of the approximating solutions $\{\X_{\g,n}^m\}_{n\in\N}$ is tight in the space $\C([0,T];\V^*)\cap\L^\beta(0,T;\H)$, for $\beta\in(1,\infty)$.
	\end{lemma}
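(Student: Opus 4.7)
The plan is to run a Flandoli--Gatarek tightness argument: combine the uniform moment bounds from Lemma \ref{lem4.2} with uniform fractional-in-time regularity estimates in the weaker space $\V^*$, and then invoke a standard compactness lemma. The index $m\in\N$ is fixed throughout, and all estimates need only be uniform in $n$.

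First I would split the Galerkin solution via \eqref{4.007} as
\begin{equation*}
\X_{\g,n}^m(t) \;=\; \boldsymbol{y}_0^n \;+\; I_{n,1}(t)+I_{n,2}(t)+I_{n,3}(t)+I_{n,4}(t)+I_{n,5}(t),
\end{equation*}
where $I_{n,1}, I_{n,3}, I_{n,4}, I_{n,5}$ are the Bochner integrals driven by $\A^n$, $\s_2^n\dot\W^m$, $\s_3^n\g$ and $-\G^n$ respectively, and $I_{n,2}(t)=\int_0^t\s_1^n(\X_{\g,n}^m(s))\Pi_n\,\d\W(s)$ is the It\^o integral. For each of the four Bochner pieces I would establish a uniform bound in $\L^q(\Omega;\W^{1,q}(0,T;\V^*))$ for a suitable $q>1$: condition (H.4) together with \eqref{4.8} gives $I_{n,1}$ with $q=\beta/(\beta-1)$; condition (H.5) combined with the bound \eqref{4.16} and the Gaussian-moment identity \eqref{4.17} (finite for fixed $m$) gives $I_{n,3}$; Cauchy--Schwarz together with $\g\in\L^2(0,T;\U)$ gives $I_{n,4}$ in $\W^{1,2}(0,T;\H)\hookrightarrow\W^{1,2}(0,T;\V^*)$; and condition (H.7) together with \eqref{4.8} gives $I_{n,5}$ in $\W^{1,2}(0,T;\H)$.

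For the stochastic integral $I_{n,2}$ I would invoke the classical Flandoli--Gatarek fractional-Sobolev estimate: for any $\alpha\in(0,1/2)$ and $p$ as in Lemma \ref{lem4.2},
\begin{equation*}
\E\|I_{n,2}\|_{\W^{\alpha,p}(0,T;\H)}^p \;\leq\; C_{\alpha,p}\,\E\!\int_0^T\|\s_1(\X_{\g,n}^m(s))\|_{\L_2}^p\,\d s,
\end{equation*}
which is uniformly bounded in $n$ by (H.5) and \eqref{4.8}. Choosing $\alpha\in(0,1/2)$ and $q>1$ with $\alpha q>1$ and combining all five pieces yields
\begin{equation*}
\sup_{n\in\N}\E\Big[\|\X_{\g,n}^m\|_{\L^\beta(0,T;\V)}^\beta+\|\X_{\g,n}^m\|_{\W^{\alpha,q}(0,T;\V^*)}^q\Big]<\infty.
\end{equation*}
Since $\V\hookrightarrow\H$ is compact, duality makes $\H\hookrightarrow\V^*$ compact, and the classical Flandoli--Gatarek compactness lemma then shows that the ball $\mathcal{K}_R=\{u:\|u\|_{\L^\beta(0,T;\V)}+\|u\|_{\W^{\alpha,q}(0,T;\V^*)}\leq R\}$ is relatively compact in $\L^\beta(0,T;\H)\cap\C([0,T];\V^*)$. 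Markov's inequality then produces, for each $\eta>0$, an $R$ large enough that $\P(\X_{\g,n}^m\notin\mathcal{K}_R)<\eta$ uniformly in $n$, which is precisely tightness.

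\emph{Main obstacle.} The delicate piece is the Wong--Zakai drift $I_{n,3}$: the weight $\dot{\W}^m(t)$ is a Gaussian step process with no $\L^\infty(\Omega)$ bound, so one cannot treat it pointwise. The remedy is \eqref{4.17}, whose constant is finite only for each fixed $m$; this is exactly why tightness here is proved $m$-by-$m$ and why the subsequent passage $m\to\infty$ in Theorem \ref{thrm3.3} requires a genuinely different argument.
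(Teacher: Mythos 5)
Your route is genuinely different from the paper's. The paper proves tightness in $\C([0,T];\V^*)$ via Jakubowski's criterion reduced to the scalar processes $\langle\X_{\g,n}^m,\bphi\rangle$ and Aldous's condition at stopping times, and proves tightness in $\L^\beta(0,T;\H)$ via the translation estimate $\lim_{\delta\to0}\sup_n\E\int_0^{T-\delta}\|\X_{\g,n}^m(t+\delta)-\X_{\g,n}^m(t)\|_\H^\beta\,\d t=0$ (the criterion of \cite[Lemma 5.2]{MRSSTZ}), with the stopping times $\tau_n^N$ used to localize all estimates. Your Flandoli--Gatarek strategy (uniform $\L^\beta(0,T;\V)$ moments plus uniform fractional time regularity in a weaker space, then a deterministic compactness lemma plus Markov) is a standard and legitimate alternative, and your treatment of the individual pieces --- in particular isolating the Wong--Zakai drift and controlling it through \eqref{4.16}--\eqref{4.17} for fixed $m$ --- matches where the real difficulty lies. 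It avoids stopping-time localization at the price of the fractional Sobolev machinery.

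There are, however, two points where the argument as written does not close. First, the set $\mathcal{K}_R=\{u:\|u\|_{\L^\beta(0,T;\V)}+\|u\|_{\W^{\alpha,q}(0,T;\V^*)}\le R\}$ is \emph{not} relatively compact in $\C([0,T];\V^*)$: the $\W^{\alpha,q}(0,T;\V^*)$ bound with $\alpha q>1$ gives equicontinuity in $\V^*$, but Arzel\`a--Ascoli also needs the pointwise values $u(t)$ to lie in a compact subset of $\V^*$, which the $\L^\beta(0,T;\V)$ bound does not provide. You must add the constraint $\sup_{t}\|u(t)\|_\H\le R$ to $\mathcal{K}_R$ (available from \eqref{4.8} via Markov) and use compactness of $\H\hookrightarrow\V^*$; equivalently, apply the Flandoli--Gatarek $\C([0,T];\cdot)$ lemma with spatial space $\H$, not $\V^*$. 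Second, the exponent bookkeeping for a single pair $(\alpha,q)$ with $\alpha<\tfrac12$ and $\alpha q>1$ forces $q>2$, whereas the drift piece $I_{n,1}$ is only controlled in $\W^{1,\beta/(\beta-1)}(0,T;\V^*)$ and $\beta/(\beta-1)\le 2$ whenever $\beta\ge2$; so the five pieces do not all land in one space $\W^{\alpha,q}(0,T;\V^*)$ with those constraints. This is repairable --- either embed $\W^{1,\beta/(\beta-1)}\hookrightarrow\C^{0,1/\beta}\hookrightarrow\W^{\alpha,q}$ for $\alpha<1/\beta$ and adjust $q$ accordingly, or establish equicontinuity in $\V^*$ separately for each piece and sum --- but as stated the ``combine all five pieces'' step is a gap, not a routine assembly.
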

	\begin{proof}We divided the proof into two steps.
		
		\vspace{2mm}
		\noindent
		\textbf{Step 1.} In this step,  we prove the tightness of $\{\X_{\g,n}^m\}_{n\in\N}$, for each $m\in\N$ in the space $\C([0,T];\V^*)$.

		We first prove the tightness of laws of $\{\X_{\g,n}^m\}_{n\in\N}$ in the space $\C([0,T];\V^*)$. Since,  the embedding $\H\hookrightarrow\V^*$ is compact and 
		\begin{align}\label{4.21}
			\lim_{N\to\infty}\sup_{n\in\N}\P\bigg\{\sup_{t\in[0,T]}\|\X_{\g,n}^m(t)\|_\H>\sqrt{N}\bigg\} \leq 	\lim_{N\to\infty}\sup_{n\in\N}\P\big\{\tau_n^N<T\big\}=0,
		\end{align} by \cite[Theorem 3.1]{AJ}, it is enough to prove that for every $\bphi\in\H_n,\ n\in\N$, $\{\langle\X_{\g,n}^m(\cdot),\bphi\rangle  \}$ is tight in the space $\C([0,T];\R)$. In view of \eqref{4.21} and Aldou's tightness criterion (see \cite[Theorem 1]{DA}), it suffices to prove that for any stopping time $0\leq \gamma_n\leq T$ and for any $\e>0$, 
		\begin{align}\label{4.22}
			\lim_{\delta\to0}\sup_{n\in\N}\P\big\{\big|\langle\X_{\g,n}^m(\gamma_n+\delta)-\X_{\g,n}^m(\gamma_n), \bphi\rangle \big|
			>\e\big\}=0,
		\end{align}where $\gamma_n+\delta:=T\wedge(\gamma_n+\delta)\vee0$. Using Markov's inequality, we deduce 
		\begin{align}\label{4.23}\nonumber
		&	\P\big\{\big|\langle\X_{\g,n}^m(\gamma_n+\delta)-\X_{\g,n}^m(\gamma_n), \bphi\rangle \big|
			>\e\big\}\\&\nonumber\leq \P\big\{\big|\langle\X_{\g,n}^m(\gamma_n+\delta)-\X_{\g,n}^m(\gamma_n), \bphi\rangle \big|>\e, \ \tau_n^N= T\big\}+\P\big\{\tau_n^N<T\big\} \\&\leq \frac{1}{\e^\beta}\E\big[\big|\langle\X_{\g,n}^m((\gamma_n+\delta)\wedge\tau_n^N)-\X_{\g,n}^m(\gamma_n\wedge\tau_n^N), \bphi\rangle \big|^\beta\big]+\P\big\{\tau_n^N<T\big\}. 
		\end{align}
		Using \eqref{4.007}, BDG inequality, we obtain 
		\begin{align}\label{4.24}\nonumber
			&\E\big[\big|\langle\X_{\g,n}^m((\gamma_n+\delta)\wedge\tau_n^N)-\X_{\g,n}^m(\gamma_n\wedge\tau_n^N), \bphi\rangle \big|^\beta\big] \\&\nonumber\leq C_\beta \E\bigg[\bigg(\int_{\gamma_n\wedge\tau_n^N}^{(\gamma_n+\delta)\wedge\tau_n^N} \big|\langle \A^n(s,\X_{\g,n}^m(s)),\bphi\rangle\big|\d s\bigg)^\beta\bigg]\\&\nonumber\quad +C_\beta \E\bigg[\bigg(\int_{\gamma_n\wedge\tau_n^N}^{(\gamma_n+\delta)\wedge\tau_n^N} \|\bphi\|_\H^2\|\s_1^n(\X_{\g,n}^m(s))\Pi_n\|_{\L_2}^2\d s\bigg)^{\frac{\beta}{2}}\bigg]\\&\nonumber\quad+ C_\beta \E\bigg[\bigg(\int_{\gamma_n\wedge\tau_n^N}^{(\gamma_n+\delta)\wedge\tau_n^N}\|\bphi\|_\H\|\s_2^n(\X_{\g,n}^m(s))\dot{\W}^m(s)\|_\H\d s\bigg)^\beta\bigg]\\&\nonumber\quad +C_\beta \E\bigg[\bigg(\int_{\gamma_n\wedge\tau_n^N}^{(\gamma_n+\delta)\wedge\tau_n^N}\|\bphi\|_\H\|\s_3^n(\X_{\g,n}^m(s))\g(s)\|_\H\d s\bigg)^\beta\bigg]\\&\nonumber\quad +C_\beta \E\bigg[\bigg(\int_{\gamma_n\wedge\tau_n^N}^{(\gamma_n+\delta)\wedge\tau_n^N}\|\bphi\|_\H\|\G^n(\X_{\g,n}^m(s))\|_\H\d s\bigg)^\beta\bigg]\\&=: I_n+II_n+III_n+IV_n+V_n.
		\end{align}Since $\bphi\in\H_n$, which gives $\sup\limits_{n\in\N}\|\PP_n\bphi\|_\V<\infty$.
		Using H\"older's inequality, Hypothesis \ref{hyp1} (H.4) and the definition of stopping times \eqref{4.19}, we find
		\begin{align}\label{4.25}\nonumber
			I_n&\leq C_\beta \E\bigg[|\delta|\bigg(\int_{\gamma_n\wedge\tau_n^N}^{(\gamma_n+\delta)\wedge\tau_n^N}\big|\langle \A(s,\X_{\g,n}^m(s)),\PP_n\bphi\rangle\big|^{\frac{\beta}{\beta-1}}\d s\bigg)^{\beta-1}\bigg]\\&\nonumber\leq C_\beta |\delta|\E\bigg[\bigg(\int_0^{\tau_n^N}\|\PP_n\bphi\|_\V^{\frac{\beta}{\beta-1}}\big(f(s)+C\|\X_{\g,n}^m(s)\|_\V^\beta\big)\big(1+\|\X_{\g,n}^m(s)\|_\H^\alpha\big)\d s\bigg)^{\beta-1}\bigg]\\&\leq C_{\beta,N} |\delta|.
		\end{align}Using H\"older's inequality, Hypothesis \ref{hyp1} (H.5) and the definition of stopping times \eqref{4.19}, we find
		\begin{align}\label{4.26}\nonumber
			II_n&\leq C_\beta K^\frac{\beta}{2}\E\bigg[\bigg(\int_{\gamma_n\wedge\tau_n^N}^{(\gamma_n+\delta)\wedge\tau_n^N}\|\bphi\|_\H^2\big(1+\|\X_{\g,n}^m(s)\|_\H^2\big)\d s\bigg)^{\frac{\beta}{2}}\bigg] \\&\leq C_{\beta,N,K}|\delta|^{\frac{\beta}{2}} .
		\end{align}Using H\"older's inequality, Hypothesis \ref{hyp1} (H.5), \eqref{4.16}  and \eqref{4.17} for $p=4$, we estimate the term $III_n$ as
		\begin{align}\label{4.27}\nonumber
			III_n&\leq  C_\beta\E\bigg[|\delta|^{\frac{\beta}{2}}\bigg(\int_{\gamma_n\wedge\tau_n^N}^{(\gamma_n+\delta)\wedge\tau_n^N}\|\s_2^n(\X_{\g,n}^m(s))\dot{\W}^m(s)\|_\H^2\d s\bigg)^\frac{\beta}{2}\bigg] \\&\nonumber\leq C_\beta |\delta|^{\frac{\beta}{2}} \E\bigg[\bigg(\int_{\gamma_n\wedge\tau_n^N}^{(\gamma_n+\delta)\wedge\tau_n^N}\big(1+\|\X_{\g,n}^m(s)\|_\H^4+\|\dot{\W}^m(s)\|_\H^4\big)\d s\bigg)^\frac{\beta}{2}\bigg] \\&\nonumber\leq  C_\beta |\delta|^{\frac{\beta}{2}}\left\{\E\bigg[\bigg(\int_{\gamma_n\wedge\tau_n^N}^{(\gamma_n+\delta)\wedge\tau_n^N}\big(1+\|\X_{\g,n}^m(s)\|_\H^4\big)\bigg)^{\frac{\beta}{2}}\bigg]	\right.\\&\nonumber\qquad\left.+\E\bigg[\bigg(\int_{\gamma_n\wedge\tau_n^N}^{(\gamma_n+\delta)\wedge\tau_n^N}\|\dot{\W}^m(s)\|_\H^4\d s\bigg)^\frac{\beta}{2}\bigg]\right\}\\&\leq  C_{\beta}\left(C_N+2^m\vp^{-1}m^2\right)|\delta|^{\frac{\beta}{2}}.
		\end{align}Now, we consider the penultimate term $IV_n$ on the right hand side of the inequality \eqref{4.24}, and estimate it using Hypothesis \eqref{hyp1} (H.5) and Young's inequality as
		\begin{align*}
			IV_n&\leq C_\beta 	\E\bigg[\bigg(\int_{\gamma_n\wedge\tau_n^N}^{(\gamma_n+\delta)\wedge\tau_n^N}\|\bphi\|_\H\sqrt{K}\big(1+\|\X_{\g,n}^m(s)\|_\H\big) \|\g(s)\|_\U\d s\bigg)^\beta\bigg] \\&\leq  C_{\beta,K}\E\bigg[\bigg(|\delta|+\int_{\gamma_n\wedge\tau_n^N}^{(\gamma_n+\delta)\wedge\tau_n^N}\big(1+\|\X_{\g,n}^m(s)\|_\H\big)^2 \|\g(s)\|_\U^2\d s\bigg)^\beta\bigg] \\&\leq C_{\beta,K} \bigg\{|\delta|^\beta+C_N\bigg(\int_{\gamma_n\wedge\tau_n^N}^{(\gamma_n+\delta)\wedge\tau_n^N}\|\g(s)\|_\U^2\d s\bigg)^\beta\bigg\}.
		\end{align*}Note that $\g\in\L^2(0,T;\U)$. By the absolute continuity of Lebesgue integrable functions, we obtain 
		\begin{align}\label{4.29}
			\lim_{\delta\to0}\sup_{n\in\N}IV_n=0.
		\end{align}
	Similarly, using H\"older's inequality and \eqref{4.18}
		\begin{align}\label{4.30}\nonumber
			V_n&\leq 	C_\beta \E\bigg[|\delta|\bigg(\int_{\gamma_n\wedge\tau_n^N}^{(\gamma_n+\delta)\wedge\tau_n^N}\|\G^n(\X_{\g,n}^m(s))\|_\H^\frac{\beta}{\beta-1}\d s\bigg)^{\beta-1}\bigg]\\&\nonumber\leq|\delta| C_{\beta,L} \E\bigg[\bigg(\int_{\gamma_n\wedge\tau_n^N}^{(\gamma_n+\delta)\wedge\tau_n^N}\big(1+\|\X_{\g,n}^m(s)\|_\H^2\big)^\frac{\beta}{2(\beta-1)}\d s\bigg)^{\beta-1}\bigg]\\&\leq C_{\beta,N,L}|\delta|^{\beta}.
		\end{align}Substituting \eqref{4.25}-\eqref{4.30} in \eqref{4.24}, we find 
		\begin{align}\label{4.31}
			\lim_{\delta\to0}\sup_{n\in\N}\E\left[\big|\langle\X_{\g,n}^m((\gamma_n+\delta)\wedge\tau_n^N)-\X_{\g,n}^m(\gamma_n\wedge\tau_n^N), \bphi\rangle \big|^\beta\right]=0.
		\end{align}In view of \eqref{4.20} and \eqref{4.31}, passing $\delta\to0$ and then $N\to\infty$ in \eqref{4.23}, we obtain the required result of the Step 1, that is, the laws of approximating sequence $\{\X_{\g,n}^m\}_{n\in\N}$ is tight in the space $\C([0,T];\V^*)$. 
		
		\vspace{2mm}
		\noindent
		\textbf{Step 2.} We move to  the proof of tightness of the laws of the approximated solutions $\{\X_{\g,n}^m\}_{n\in\N}$ in the space $\L^\beta(0,T;\H)$, for $\beta\in(1,\infty)$. By the estimate \eqref{4.8}, we have 
		\begin{align}\label{4.031}
			\sup_{n\in\N} \E\bigg[\int_0^T\|\X_{\g,n}^m(t)\|_\V^\beta\d t\bigg]\leq C_m<\infty, \ \ \text{ for } \ \ \beta\in(1,\infty).
		\end{align}
		In view of \cite[Lemma 5.2]{MRSSTZ}, it is enough to show that for each $m\in\N$ and  for any $\e>0$, 
		\begin{align}\label{4.32}
			\lim_{\delta\to0}\sup_{n\in\N}\P\bigg\{\int_0^{T-\delta}\|\X_{\g,n}^m(t+\delta)-\X_{\g,n}^m(t)\|_\H^\beta\d t>\e\bigg\}=0.
		\end{align}
	An application of Markov's inequality yields 
		\begin{align}\label{4.33}\nonumber
			&	\P\bigg\{\int_0^{T-\delta}\|\X_{\g,n}^m(t+\delta)-\X_{\g,n}^m(t)\|_\H^\beta\d t>\e\bigg\}\\&\nonumber\leq \P\bigg\{\int_0^{T-\delta}\|\X_{\g,n}^m(t+\delta)-\X_{\g,n}^m(t)\|_\H^\beta\d t>\e, \tau_n^N=T\bigg\} +\P\big\{\tau_n^N<T\big\}\\&\leq \frac{1}{\e}\E\bigg[\int_0^{T-\delta}\|\X_{\g,n}^m((t+\delta)\wedge\tau_n^N)-\X_{\g,n}^m(t\wedge\tau_n^N)\|_\H^\beta\d t\bigg]+\P\big\{\tau_n^N<T\big\}.
		\end{align}In view of \eqref{4.20} and \eqref{4.33}, if we are able to prove 
		\begin{align}\label{4.34}
			\lim_{\delta\to0}\sup_{n\in\N}\E \bigg[\int_0^{T-\delta}\|\X_{\g,n}^m((t+\delta)\wedge\tau_n^N)-\X_{\g,n}^m(t\wedge\tau_n^N)\|_\H^\beta\d t\bigg]=0,
		\end{align}then we are done.
		
		Now, our main focus is to establish \eqref{4.34}. We divide the proof into two cases depending on the values of $\beta$, that is, we provide  the proof for $\beta\in(1,2]$ and $\beta\geq2$, separately.
		
		\vspace{2mm}
		\noindent \textbf{Case 1.} \textit{When $\beta\in(1,2]$}.  Applying finite-dimensional It\^o's formula to the process
		$\|\X_{\g,n}^m(\cdot)-\X_{\g,n}^m(t\wedge\tau_n^N)\|_\H^2$, and then taking expectation, we find 
		\begin{align}\label{4.35}\nonumber
			&	\E\left[\|\X_{\g,n}^m((t+\delta)\wedge\tau_n^N)-\X_{\g,n}^m(t\wedge\tau_n^N)\|_\H^2\right]\\&\nonumber= \E\bigg[\int_{t\wedge\tau_n^N}^{(t+\delta)\wedge\tau_n^N}\bigg(2\langle \A(l,\X_{\g,n}^m(l)),\X_{\g,n}^m(l)-\X_{\g,n}^m(t\wedge\tau_n^N)\rangle+\big(\s_2^n(\X_{\g,n}^m(l))\dot{\W}^m(l)\\&\nonumber\qquad +\s_3^n(\X_{\g,n}^m(l))\g(l)-\G^n(\X_{\g,n}^m(l)),\X_{\g,n}^m(l)-\X_{\g,n}^m(t\wedge\tau_n^N)\big)\bigg) \d l\bigg] \\&\quad +\E\bigg[\int_{t\wedge\tau_n^N}^{(t+\delta)\wedge\tau_n^N}\|\s_1^n(\X_{\g,n}^m(l))\Pi_n\|_{\L_2}^2\d l\bigg].
		\end{align}From \eqref{4.35}, we deduce that 
		\begin{align}\label{4.36}\nonumber
			&	\E\bigg[\int_0^{T-\delta}\|\X_{\g,n}^m((t+\delta)\wedge\tau_n^N)-\X_{\g,n}^m(t\wedge\tau_n^N)\|_\H^2\d t\bigg]\\&\nonumber= \E\bigg[\int_0^{T-\delta}\bigg\{\int_{t\wedge\tau_n^N}^{(t+\delta)\wedge\tau_n^N}\bigg(2\langle \A(l,\X_{\g,n}^m(l)),\X_{\g,n}^m(l)\rangle+\|\s_1^n(\X_{\g,n}^m(l))\Pi_n\|_{\L_2}^2\bigg)\d l\bigg\}\d t\bigg]\\&\nonumber\quad +2\E\bigg[\int_0^{T-\delta}\bigg\{\int_{t\wedge\tau_n^N}^{(t+\delta)\wedge\tau_n^N}\big(\s_2^n(\X_{\g,n}^m(l))\dot{\W}^m(l),\X_{\g,n}^m(l)\big)\d l\bigg\}\d t\bigg]\\&\nonumber\quad +2\E\bigg[\int_0^{T-\delta}\bigg\{\int_{t\wedge\tau_n^N}^{(t+\delta)\wedge\tau_n^N}\big(\s_3^n(\X_{\g,n}^m(l))\g(l),\X_{\g,n}^m(l)\big)\d l\bigg\}\d t\bigg]\\&\nonumber\quad +2\E\bigg[\int_0^{T-\delta}\bigg\{\int_{t\wedge\tau_n^N}^{(t+\delta)\wedge\tau_n^N}\big(\G^n(\X_{\g,n}^m(l)),\X_{\g,n}^m(l)\big)\d l\bigg\}\d t\bigg] \\&\nonumber\quad -2\E\bigg[\int_0^{T-\delta}\bigg\{\int_{t\wedge\tau_n^N}^{(t+\delta)\wedge\tau_n^N}\langle \A(l,\X_{\g,n}^m(l)),\X_{\g,n}^m(t\wedge\tau_n^N)\rangle\d l\bigg\}\d t\bigg]\\&\nonumber \quad
			-2\E\bigg[\int_0^{T-\delta}\bigg\{\int_{t\wedge\tau_n^N}^{(t+\delta)\wedge\tau_n^N}\big( \s_2^n(\X_{\g,n}^m(l))\dot{\W}^m(l),\X_{\g,n}^m(t\wedge\tau_n^N)\big)\d l\bigg\}\d t\bigg]\\& \nonumber\quad
			-2\E\bigg[\int_0^{T-\delta}\bigg\{\int_{t\wedge\tau_n^N}^{(t+\delta)\wedge\tau_n^N}\big( \s_3^n(\X_{\g,n}^m(l))\g(l),\X_{\g,n}^m(t\wedge\tau_n^N)\big)\d l\bigg\}\d t\bigg]\\&\nonumber \quad
			+2\E\bigg[\int_0^{T-\delta}\bigg\{\int_{t\wedge\tau_n^N}^{(t+\delta)\wedge\tau_n^N}\big( \G^n(\X_{\g,n}^m(l)),\X_{\g,n}^m(t\wedge\tau_n^N)\big)\d l\bigg\}\d t\bigg]
			\\&=: \sum_{j=1}^8I_j.
		\end{align}Using Hypothesis \ref{hyp1} (H.3) and (H.5), Fubini's theorem and \eqref{4.8}, we estimate the term $I_1$ as 
		\begin{align}\label{4.37}\nonumber
			|I_1|&=\E\bigg[\int_0^{\tau_n^N}\bigg(\int_{0\vee(l-\delta)}^l\chi_{\{\tau_n^N>t\}}\d t\bigg)\bigg(2\langle \A(l,\X_{\g,n}^m(l)),\X_{\g,n}^m(l)\rangle+\|\s_1^n(\X_{\g,n}^m(l))\Pi_n\|_{\L_2}^2\bigg)\d l\bigg]\\&\nonumber\leq \delta\E\bigg[\int_0^{\tau_n^N} \big(f(l)+K\big)\big(1+\|\X_{\g,n}^m(l)\|_\H^2\big)\d l\bigg]\\&\leq \delta \bigg(\int_0^T f(l)\d l+KT\bigg)\bigg(1+\E\bigg[\sup_{l\in[0,\tau_n^N]}\|\X_{\g,n}^m(l)\|_\H^2\bigg]\bigg) \leq C\delta .
		\end{align}
	Now, we consider the term $|I_2|$ and estimate it using Fubini's theorem, Hypothesis \ref{hyp1} (H.5) and the definition of stopping time $\tau_n^N$  as
		\begin{align}\label{4.38}\nonumber
				|I_2|&\leq  2\E\bigg[\int_0^{\tau_n^N}\bigg(\int_{0\vee(l-\delta)}^l\chi_{\{\tau_n^N>t\}}\d t\bigg)\big|\big(\s_2^n(\X_{\g,n}^m(l))\dot{\W}^m(l),\X_{\g,n}^m(l)\big)\big|\d l\bigg]\\&\nonumber\leq C \delta  \E\bigg[\int_0^{\tau_n^N}\|\X_{\g,n}^m(l)\|_\H\|\s_2^n(\X_{\g,n}^m(l))\dot{\W}^m(l)\|_\H\d l\bigg]
				\\&\nonumber\leq C \delta  \E\bigg[\int_0^{\tau_n^N}\|\X_{\g,n}^m(l)\|_\H\|\s_2^n(\X_{\g,n}^m(l))\|_{\L_2}\|\dot{\W}^m(l)\|_\U\d l\bigg]  \\&\leq C \delta\bigg(1+C\bigg\{\int_0^{t}\E\big[\|\dot{\W}^m(l)\|_\U^2\big] \d l\bigg\}^{\frac{1}{2}}\bigg) \leq C\delta .
		\end{align}
	We estimate $I_3$ using similar arguments to \eqref{4.13} and \eqref{4.8} as
		\begin{align}\label{4.39}
				|I_3|&\leq 2\E\bigg[\int_0^{\tau_n^N}\bigg(\int_{0\vee(l-\delta)}^l\chi_{\{\tau_n^N>t\}}\d t\bigg)\big|\big(\s_3^n(\X_{\g,n}^m(l))\g(l),\X_{\g,n}^m(l)\big)\big|\d l\bigg]\nonumber\\&\nonumber\leq C \delta \E \bigg[\int_0^{\tau_n^N}\big(1+\|\X_{\g,n}^m(l)\|_\H\big)\|\X_{\g,n}^m(l)\|_\H\|\g(l)\|_\U\d l\bigg]  \\&\leq C \delta T^{\frac{1}{2}}\bigg(\int_0^T\|\g(l)\|_\U^2\d l\bigg)^{\frac{1}{2}}\bigg(1+\E\bigg[\sup_{l\in[0,{\tau_n^N}]}\|\X_{\g,n}^m(l)\|_\H^2\bigg]\bigg) \leq  C\delta.
		\end{align}Again, using Fubini's theorem and \eqref{4.18}, we estimate the term $I_4$ as 
		\begin{align}\label{4.40}\nonumber
			|I_4| &\leq 	2\E\bigg[\int_0^{\tau_n^N}\bigg(\int_{0\vee(l-\delta)}^l\chi_{\{\tau_n^N>t\}}\d t\bigg)\|\G^n(\X_{\g,n}^m(l))\|_\H\|\X_{\g,n}^m(l)\|_\H\d l\bigg]\\&\leq  C\delta\E\bigg[\int_0^{\tau_n^N} \big(1+2\|\X_{\g,n}^m(l)\|_\H^2\big)\d l\bigg] \leq  C\delta \bigg(1+\E\bigg[\sup_{l\in[0,\tau_n^N]}\|\X_{\g,n}^m(l)\|_\H^2\bigg]\bigg)\leq C\delta.
		\end{align}Applying Fubini's theorem and Hypothesis \ref{hyp1} (H.4), we estimate the term $I_5$ as 
		\begin{align}\label{4.41}\nonumber
			|I_5| &\leq 2\E\bigg[\bigg|\int_0^{\tau_n^N}\bigg(\int_{0\vee(l-\delta)}^l \chi_{\{\tau_n^N>t\}}\langle \A(l,\X_{\g,n}^m(l)),\X_{\g,n}^m(t\wedge\tau_n^N)\rangle \bigg)\d t\d l\bigg|\bigg]\\& \nonumber\leq 2\E\bigg[\int_0^{\tau_n^N}\|\A(l,\X_{\g,n}^m(l))\|_{\V^*}\bigg(\int_{0\vee(l-\delta)}^l\|\X_{\g,n}^m(t\wedge\tau_n^N)\|_\V\d t\bigg)\d l\bigg]\\&\nonumber\leq 2\delta^{\frac{\beta-1}{\beta}}\E\bigg[\int_0^{\tau_n^N}\|\A(l,\X_{\g,n}^m(l))\|_{\V^*}\bigg(\int_0^{\tau_n^N}\|\X_{\g,n}^m(t)\|_\V^\beta\d t\bigg)^\frac{1}{\beta}\d l\bigg] \\&\nonumber\leq 2\delta^{\frac{\beta-1}{\beta}}T^{\frac{1}{\beta}}\left\{\E\bigg[\int_0^{\tau_n^N}\|\A(l,\X_{\g,n}^m(l))\|_{\V^*}^{\frac{\beta}{\beta-1}}\d l\bigg]\right\}^{\frac{\beta-1}{\beta}}\left\{\E\bigg[\int_0^{\tau_n^N}\|\X_{\g,n}^m(t)\|_\V^\beta\d t\bigg]\right\}^{\frac{1}{\beta}}\\&\leq C\delta^{\frac{\beta-1}{\beta}}.
		\end{align}Now, we consider the term $I_6$, and estimate it using Fubini's theorem and  Hypothesis \ref{hyp1} (H.5) as
		\begin{align}\label{4.42}\nonumber
			|I_6| &\leq 2\E\bigg[\bigg|\int_0^{\tau_n^N}\int_{0\vee(l-\delta)}^l \chi_{\{\tau_n^N>t\}}\big( \s_2^n(\X_{\g,n}^m(l))\dot{\W}^m(l),\X_{\g,n}^m(t\wedge\tau_n^N)\big)\d l\d t\bigg|\bigg] \\&\nonumber\leq 2\E\bigg[\int_0^{\tau_n^N}\|\s_2^n(\X_{\g,n}^m(l))\dot{\W}^m(l)\|_\H\bigg(\int_{0\vee(l-\delta)}^l\|\X_{\g,n}^m(t\wedge\tau_n^N)\|_\H\d t\bigg)\d l\bigg]\\&\nonumber\leq 2\delta^{\frac{1}{2}}\E\bigg[\int_0^{\tau_n^N}\|\s_2^n(\X_{\g,n}^m(l))\dot{\W}^m(l)\|_\H\bigg(\int_0^{\tau_n^N}\|\X_{\g,n}^m(t)\|_\H^2\d t\bigg)^\frac{1}{2}\d l\bigg]\\&\nonumber\leq 2\delta^{\frac{1}{2}}T^{\frac{1}{2}}\left\{\E\bigg[\int_0^{\tau_n^N}\|\s_2^n(\X_{\g,n}^m(l))\dot{\W}^m(l)\|_\H^2\d l\bigg]\right\}^\frac{1}{2}\left\{\E\bigg[\int_0^{\tau_n^N}\|\X_{\g,n}^m(t)\|_\H^2\d t\bigg]\right\}^\frac{1}{2}\\&\leq C \delta^\frac{1}{2},
		\end{align}
	where we have used \eqref{4.16} also. Similarly, we can estimate the term $I_7$ as 
		\begin{align}\label{4.43}\nonumber
			|I_7| &\leq 2\E\bigg[\bigg|\int_0^{\tau_n^N}\int_{0\vee(l-\delta)}^l \chi_{\{\tau_n^N>t\}}\big( \s_3^n(\X_{\g,n}^m(l))\g(l),\X_{\g,n}^m(t\wedge\tau_n^N)\big)\d l\d t\bigg|\bigg]\\&\nonumber\leq 2\E \bigg[\int_0^{\tau_n^N}\|\s_3^n(\X_{\g,n}^m(l))\g(l)\|_\H\bigg(\int_{0\vee(l-\delta)}^l\|\X_{\g,n}^m(t\wedge\tau_n^N)\|_\H\d t\bigg)\d l\bigg]\\&\nonumber\leq 2\delta^{\frac{1}{2}}T^{\frac{1}{2}}\left\{\E\bigg[\int_0^{\tau_n^N}\|\s_3^n(\X_{\g,n}^m(l))\g(l)\|_\H^2\d l\bigg]\right\}^{\frac{1}{2}}\left\{\E\bigg[\int_0^{\tau_n^N}\|\X_{\g,n}^m(t)\|_\H^2\d t\bigg]\right\}^\frac{1}{2} \\&\nonumber\leq K\delta^{\frac{1}{2}}T^{\frac{1}{2}}\left\{\bigg(\int_0^{T}\|\g(l)\|_\U^2\d l\bigg)\bigg(1+\E\bigg[\sup_{l\in[0,\tau_n^N]}\|\X_{\g,n}^m(l)\|_\H^2\bigg]\bigg)\right\}^{\frac{1}{2}}\left\{\E\bigg[\int_0^{\tau_n^N}\|\X_{\g,n}^m(t)\|_\H^2\d t\bigg]\right\}^\frac{1}{2} \\&\leq C\delta^{\frac{1}{2}}.
		\end{align}Again, using similar arguments as in the above inequality, we estimate the term $I_8$ as 
		\begin{align}\label{4.44}\nonumber
			|I_8|&\leq 2\E\bigg[\bigg|\int_0^{\tau_n^N}\int_{0\vee(l-\delta)}^l \chi_{\{\tau_n^N>t\}}\big( \G^n(\X_{\g,n}^m(l)),\X_{\g,n}^m(t\wedge\tau_n^N)\big)\d l\d t\bigg|\bigg] \\&\nonumber\leq 2\E \bigg[\int_0^{\tau_n^N}\|\G^n(\X_{\g,n}^m(l))\|_\H\bigg(\int_{0\vee(l-\delta)}^l\|\X_{\g,n}^m(t\wedge\tau_n^N)\|_\H\d t\bigg)\d l\bigg]\\&\nonumber \leq 2\delta^{\frac{1}{2}}T^{\frac{1}{2}}\left\{\E\bigg[\int_0^{\tau_n^N}\|\G^n(\X_{\g,n}^m(l))\|_\H^2\d l\bigg]\right\}^{\frac{1}{2}}\left\{\E\bigg[\int_0^{\tau_n^N}\|\X_{\g,n}^m(t)\|_\H^2\d t\bigg]\right\}^\frac{1}{2}\\&\nonumber\leq \delta^{\frac{1}{2}}C\left\{\bigg(1+\E\bigg[\sup_{l\in[0,\tau_n^N]}\|\X_{\g,n}^m(l)\|_\H^2\bigg]\right\}^{\frac{1}{2}}\left\{\E\bigg[\int_0^{\tau_n^N}\|\X_{\g,n}^m(t)\|_\H^2\d t\bigg]\right\}^\frac{1}{2}
			\\&	\leq C\delta^{\frac{1}{2}}.
		\end{align}Combining \eqref{4.36}-\eqref{4.44}, we obtain 
		\begin{align}\label{4.45}
			\sup_{n\in\N}\E\bigg[\int_0^{T-\delta}\|\X_{\g,n}^m((t+\delta)\wedge\tau_n^N)-\X_{\g,n}^m(t\wedge\tau_n^N)\|_\H^2\d t\bigg]\leq C\big(\delta+\delta^{\frac{1}{2}}+ \delta^{\frac{\beta-1}{\beta}}\big).
		\end{align}For $\beta\in(1,2)$, we apply H\"older's inequality to find 
		\begin{align}\label{4.46}\nonumber
			&\lim_{\delta\to0}\sup_{n\in\N}\E \bigg[\int_0^{T-\delta}\|\X_{\g,n}^m((t+\delta)\wedge\tau_n^N)-\X_{\g,n}^m(t\wedge\tau_n^N)\|_\H^\beta\d t\bigg]\\&\leq C	\sup_{n\in\N}\left\{\E\bigg[\int_0^{T-\delta}\|\X_{\g,n}^m((t+\delta)\wedge\tau_n^N)-\X_{\g,n}^m(t\wedge\tau_n^N)\|_\H^2\d t\bigg]\right\}^{\frac{\beta}{2}}=0,
		\end{align}which completes the proof of \eqref{4.34}, for $\beta\in(1,2]$.
		
		\vspace{2mm}
		\noindent \textbf{Case 2.} \textit{When $\beta>2$.} Again, applying It\^o's formula to the process $\|\X_{\g,n}^m(\cdot)-\X_{\g,n}^m(t\wedge\tau_n^N)\|_\H^2$, and then taking expectation, we find
		\begin{align}\label{4.47}\nonumber
			&	\E\left[\|\X_{\g,n}^m((t+\delta)\wedge\tau_n^N)-\X_{\g,n}^m(t\wedge\tau_n^N)\|_\H^\beta\right]\\&\nonumber=\frac{\beta}{2} \E\bigg[\int_{t\wedge\tau_n^N}^{(t+\delta)\wedge\tau_n^N}\|\X_{\g,n}^m(l)-\X_{\g,n}^m(t\wedge\tau_n^N)\|_\H^{\beta-2}\bigg(2\langle \A(l,\X_{\g,n}^m(l)),\X_{\g,n}^m(l)-\X_{\g,n}^m(t\wedge\tau_n^N)\rangle\\&\nonumber\quad+\big(\s_2^n(\X_{\g,n}^m(l))\dot{\W}^m(l) +\s_3^n(\X_{\g,n}^m(l))\g(l)-\G^n(\X_{\g,n}^m(l)),\X_{\g,n}^m(l)-\X_{\g,n}^m(t\wedge\tau_n^N)\big)\bigg) \d l\bigg] \\&\nonumber\quad +\frac{\beta}{2}\E\bigg[\int_{t\wedge\tau_n^N}^{(t+\delta)\wedge\tau_n^N}\|\X_{\g,n}^m(l)-\X_{\g,n}^m(t\wedge\tau_n^N)\|_\H^{\beta-2}\|\s_1^n(\X_{\g,n}^m(l))\Pi_n\|_{\L_2}^2\d l\bigg]\\&\nonumber\quad + \frac{\beta(\beta-2)}{2}\E\bigg[\int_{t\wedge\tau_n^N}^{(t+\delta)\wedge\tau_n^N}\|\X_{\g,n}^m(l)-\X_{\g,n}^m(t\wedge\tau_n^N)\|_\H^{\beta-4}\\&\qquad\times\|(\s_1^n(\X_{\g,n}^m(l))\Pi_n)^*(\X_{\g,n}^m(l)-\X_{\g,n}^m(t\wedge\tau_n^N))\|_\U^2\d l\bigg].
		\end{align}
	Applying Fubini's theorem in \eqref{4.47}, we obtain \small{
		\begin{align}\label{4.48}\nonumber
			&\E\bigg[\int\limits_0^{T-\delta}\|\X_{\g,n}^m((t+\delta)\wedge\tau_n^N)-\X_{\g,n}^m(t\wedge\tau_n^N)\|_\H^\beta\d t\bigg]\\&\nonumber\leq  \frac{\beta}{2} \E\bigg[\int\limits_0^{T-\delta}\bigg\{\int\limits_{t\wedge\tau_n^N}^{(t+\delta)\wedge\tau_n^N}\|\X_{\g,n}^m(l)-\X_{\g,n}^m(t\wedge\tau_n^N)\|_\H^{\beta-2}\\&\nonumber\qquad \times\bigg(2\langle \A(l,\X_{\g,n}^m(l)),\X_{\g,n}^m(l)\rangle+(\beta-1)\|\s_1^n(\X_{\g,n}^m(l))\Pi_n\|_{\L_2}^2\bigg)\d l\bigg\}\d t\bigg] \\&\nonumber\quad +\frac{\beta}{2}\E\bigg[\int\limits_0^{T-\delta}\bigg\{\int\limits_{t\wedge\tau_n^N}^{(t+\delta)\wedge\tau_n^N}\|\X_{\g,n}^m(l)-\X_{\g,n}^m(t\wedge\tau_n^N)\|_\H^{\beta-2}\big(\s_2^n(\X_{\g,n}^m(l))\dot{\W}^m(l),\X_{\g,n}^m(t\wedge\tau_n^N)\big)\d l\bigg\}\d t\bigg]\\&\nonumber\quad +\frac{\beta}{2}\E\bigg[\int\limits_0^{T-\delta}\bigg\{\int\limits_{t\wedge\tau_n^N}^{(t+\delta)\wedge\tau_n^N}\|\X_{\g,n}^m(l)-\X_{\g,n}^m(t\wedge\tau_n^N)\|_\H^{\beta-2}\big(\s_3^n(\X_{\g,n}^m(l))\g(l),\X_{\g,n}^m(t\wedge\tau_n^N)\big)\d l\bigg\}\d t\bigg]\\&\nonumber\quad +\frac{\beta}{2}\E\bigg[\int\limits_0^{T-\delta}\bigg\{\int\limits_{t\wedge\tau_n^N}^{(t+\delta)\wedge\tau_n^N}\|\X_{\g,n}^m(l)-\X_{\g,n}^m(t\wedge\tau_n^N)\|_\H^{\beta-2}\big(\G^n(\X_{\g,n}^m(l)),\X_{\g,n}^m(t\wedge\tau_n^N)\big)\d l\bigg\}\d t\bigg]\\&\nonumber\quad 
			-\beta  \E\bigg[\int\limits_0^{T-\delta}\bigg\{\int\limits_{t\wedge\tau_n^N}^{(t+\delta)\wedge\tau_n^N}\|\X_{\g,n}^m(l)-\X_{\g,n}^m(t\wedge\tau_n^N)\|_\H^{\beta-2}\langle \A(l,\X_{\g,n}^m(l)),\X_{\g,n}^m(t\wedge\tau_n^N)\rangle\d l\bigg\}\d t\bigg]\\&\nonumber\quad -\frac{\beta }{2} \E\bigg[\int\limits_0^{T-\delta}\bigg\{\int\limits_{t\wedge\tau_n^N}^{(t+\delta)\wedge\tau_n^N}\|\X_{\g,n}^m(l)-\X_{\g,n}^m(t\wedge\tau_n^N)\|_\H^{\beta-2}\big(\s_2^n(\X_{\g,n}^m(l))\dot{\W}^m(l),\X_{\g,n}^m(t\wedge\tau_n^N)\big)\d l\bigg\}\d t\bigg]\\&\nonumber\quad -\frac{\beta }{2} \E\bigg[\int\limits_0^{T-\delta}\bigg\{\int\limits_{t\wedge\tau_n^N}^{(t+\delta)\wedge\tau_n^N}\|\X_{\g,n}^m(l)-\X_{\g,n}^m(t\wedge\tau_n^N)\|_\H^{\beta-2}\big(\s_3^n(\X_{\g,n}^m(l))\g(l),\X_{\g,n}^m(t\wedge\tau_n^N)\big)\d l\bigg\}\d t\bigg]\\&\nonumber\quad +\frac{\beta }{2} \E\bigg[\int\limits_0^{T-\delta}\bigg\{\int\limits_{t\wedge\tau_n^N}^{(t+\delta)\wedge\tau_n^N}\|\X_{\g,n}^m(l)-\X_{\g,n}^m(t\wedge\tau_n^N)\|_\H^{\beta-2}\big(\G^n(\X_{\g,n}^m(l)),\X_{\g,n}^m(t\wedge\tau_n^N)\big)\d l\bigg\}\d t\bigg]\\&=:\sum_{i=1}^8J_i.
		\end{align}} 
	\normalsize
Using Hypothesis \ref{hyp1} (H.3), (H.5), Fubini's theorem and \eqref{4.8}, we estimate the term $J_1$ as
		\begin{align}\label{4.49}\nonumber
			&	|J_1|\\&\nonumber\leq C_\beta\E\bigg[\int_0^{\tau_n^N}\bigg(\delta\|\X_{\g,n}^m(l)\|_\H^{\beta-2}+\delta\sup_{l\in[0,T\wedge\tau_n^N]}\|\X_{\g,n}^m(l)\|_\H^{\beta-2}\bigg) \big(f(l)+K\big)\big(1+\|\X_{\g,n}^m(l)\|_\H^2\big)\d l\bigg]\\&\nonumber\leq 
		 C_\beta\delta\E\bigg[\int_0^{\tau_n^N}\big(f(l)+K\big)\big(1+\|\X_{\g,n}^m(l)\|_\H^2\big)\|\X_{\g,n}^m(l)\|_\H^{\beta-2}\d l\bigg]\\&\nonumber\quad+  C_\beta\delta \E\bigg[\sup_{l\in[0,\tau_n^N]}\left\{\|\X_{\g,n}^m(l)\|_\H^{\beta-2}\big(1+\|\X_{\g,n}^m(l)\|_\H^2\big)\right\}\int_0^T\big(f(l)+K\big)\d l\bigg] \\&\leq C \delta .
		\end{align}Using Hypothesis \ref{hyp1} (H.5), \eqref{4.38}-\eqref{4.40} and the arguments similar to \eqref{4.49}, we obtain the following bounds: 
		\begin{align}\label{4.50}
			|J_2|\leq C\delta , \ |J_3| \leq C\delta , \ \text{ and } \  |J_4|\leq C\delta .
		\end{align}Now, we consider the term $J_5$ and estimate it using Fubini's theorem as
		\begin{align}\label{4.51}\nonumber
			&|J_5|\\&\nonumber\leq \beta  \E\bigg[\bigg|\int_0^{\tau_n^N}\int_{0\vee(l-\delta)}^{l}\chi_{\{\tau_n^N>t\}}\|\X_{\g,n}^m(l)-\X_{\g,n}^m(t\wedge\tau_n^N)\|_\H^{\beta-2}\\&\nonumber\qquad\quad \times\langle \A(l,\X_{\g,n}^m(l)),\X_{\g,n}^m(t\wedge\tau_n^N)\rangle\d t\d l\bigg|\bigg]\\&\nonumber\leq C_\beta \E\bigg[\int_0^{\tau_n^N}\int_{0\vee(l-\delta)}^{l}\chi_{\{\tau_n^N>t\}}\|\X_{\g,n}^m(l)\|_\H^{\beta-2}\|\A(l,\X_{\g,n}^m(l))\|_{\V^*}\|\X_{\g,n}^m(t\wedge\tau_n^N)\|_\V\d t\d l\bigg]\\&\nonumber\quad + C_\beta \E\bigg[\int_0^{\tau_n^N}\int_{0\vee(l-\delta)}^{l}\chi_{\{\tau_n^N>t\}}\|\X_{\g,n}^m(t\wedge\tau_n^N)\|_\H^{\beta-2}\|\A(l,\X_{\g,n}^m(l))\|_{\V^*}\|\X_{\g,n}^m(t\wedge\tau_n^N)\|_\V\d t\d l\bigg]\\& =: J_{51}+J_{52}.
		\end{align}Using H\"older's inequality and Hypothesis \ref{hyp1} (H.5), we estimate the term $J_{51}$ as
		\begin{align}\label{4.52}\nonumber
			&	|J_{51}|\\&\nonumber\leq C_\beta\E\bigg[\sup_{l\in[0,\tau_n^N]}\|\X_{\g,n}^m(l)\|_\H^{\beta-2}\int_0^{\tau_n^N}\|\A(l,\X_{\g,n}^m(l))\|_{\V^*}\bigg(\int_{0\vee(l-\delta)}^{l}\|\X_{\g,n}^m(t)\|_\V\d t\bigg)\d l\bigg]
			\\&\nonumber\leq  C_\beta\delta^\frac{\beta-1}{\beta}\E\bigg[\sup_{l\in[0,\tau_n^N]}\|\X_{\g,n}^m(l)\|_\H^{\beta-2}\int_0^{\tau_n^N}\|\A(l,\X_{\g,n}^m(l))\|_{\V^*}\d l\bigg(\int_0^{\tau_n^N}\|\X_{\g,n}^m(t)\|_\V^\beta\d t\bigg)^{\frac{1}{\beta}}\bigg]\\&\nonumber\leq 
			C_\beta\delta^\frac{\beta-1}{\beta}T^\frac{\beta-1}{\beta}\left\{\E\bigg[\sup_{l\in[0,\tau_n^N]}\|\X_{\g,n}^m(l)\|_\H^\frac{\beta(\beta-2)}{2}\bigg]\right\}^{\frac{2}{\beta}}\left\{\E\bigg[\int_0^{\tau_n^N}\|\A(l,\X_{\g,n}^m(l))\|_{\V^*}^{\frac{\beta}{\beta-1}}\d l\bigg]\right\}^{\frac{\beta-1}{\beta}}\\&\nonumber\qquad\times \left\{\E\bigg[\int_0^{\tau_n^N}\|\X_{\g,n}^m(t)\|_\V^\beta\d t\bigg]^{\frac{1}{2}}\right\}^{\frac{2}{\beta}}\\&\leq C\delta^\frac{\beta-1}{\beta}.
		\end{align}
	Similarly, $J_{52}$ can be estimated as 
		\begin{align}\label{4.53}\nonumber
			|J_{52}|&\leq C_\beta\delta^\frac{\beta-1}{\beta}\E\bigg[\int_0^{ \tau_n^N}\|\A(l,\X_{\g,n}^m(l))\|_{\V^*}\d l \bigg(\int_0^{T\wedge \tau_n^N}\|\X_{\g,n}^m(t)\|_\V^\beta\d t\bigg)^\frac{1}{\beta}\bigg]\\&\leq C\delta^\frac{\beta-1}{\beta}.
		\end{align}Substituting \eqref{4.52} and \eqref{4.53} in \eqref{4.51}, we obtain 
		\begin{align}\label{4.54}
			|J_5| \leq C\delta^{\frac{\beta-1}{\beta}}.
		\end{align}
	A calculation similar to \eqref{4.53} along with Hypothesis \ref{hyp1} (H.5) and \eqref{4.42}-\eqref{4.44} gives
		\begin{align}\label{4.55}
			|J_6|+|J_7|+|J_8|\leq C  \delta^\frac{1}{2}. 
		\end{align}Combining \eqref{4.48}-\eqref{4.55}, we arrive at 
		\begin{align}\label{4.56}
			\sup_{n\in\N} \E\bigg[\int_0^{T-\delta}\|\X_{\g,n}^m(l)-\X_{\g,n}^m(t\wedge\tau_n^N)\|_\H^\beta\d s\bigg]\leq C\big(\delta+\delta^\frac{1}{2}+\delta^\frac{\beta-1}{\beta}\big).
		\end{align}Hence, \eqref{4.34} holds for $\beta\geq 2$.
	\end{proof}
	
	Let us set 
	\begin{align*}
	\Lambda =\C([0,T];\V^*)\cap \L^\beta(0,T;\H)\times\C([0,T];\U_1),
	\end{align*}where $\U_1$ is an another separable Hilbert space such that the embedding $\U\hookrightarrow\U_1$ is Hilbert-Schmidt (see \cite[Remark 2.5.1]{WLMR2}). By Lemma \ref{lem4.3}, for $\W^n(\cdot):=\W(\cdot)$ ($n\in\N$), we deduce that the family of the laws $\{\mathscr{L}(\X_{\g,n}^m,\W^n)\}_{n\in\N}$ of the random vectors $\{(\X_{\g,n}^m,\W^n)\}_{n\in\N}$ is tight in $\Lambda$. By Prokhorov's theorem (see \cite[Section 5]{PB}) and a version of Skorokhod's representation theorem (see \cite[Theorem C.1]{ZBEHPAR1} or  \cite[Theorem A.1]{PNKTRT}), we can find a new probability space $(\wi{\Omega},\wi{\mathscr{F}},\wi{\P})$ and a subsequence of the random vectors $\{(\wi{\X}_{\g,n}^m,\wi{\W}^n)\}_{n\in\N}$ (still denoted by the same index) and $(\wi{\X}_{\g}^m,\wi{\W})$ on the space $\Lambda$ such that 
	\begin{enumerate}
		\item $\mathscr{L}(\wi{\X}_{\g,n}^m,\wi{\W}^n)=\mathscr{L}(\X_{\g,n}^m,\W^n)$, for all $n\in\N$;
		\item $(\wi{\X}_{\g,n}^m,\wi{\W}^n)\to (\wi{\X}_{\g}^m,\wi{\W})$ in $\Lambda$ with the probability 1 on the space $(\wi{\Omega},\wi{\mathscr{F}},\wi{\P})$ as $n\to\infty$;
		\item $\wi{\W}^n(\wi{\omega})=\wi{\W}(\wi{\omega})$, for all $\wi{\omega}\in\wi{\Omega}$.
	\end{enumerate}Thus, we have 
	\begin{align}\label{4.57}
		\|\wi{\X}_{\g,n}^m-\wi{\X}_{\g}^m\|_{\L^\beta(0,T;\H)}+\|\wi{\X}_{\g,n}^m-\wi{\X}_{\g}^m\|_{\C([0,T];\V^*)}\to0, \ \ \wi{\P}\text{-a.s.}
	\end{align}Now, our focus is to show that for each $m\in\N$, the random vector $(\wi{\X}_{\g}^m,\wi{\W})$ is a solution of \eqref{1.6}. Let us denote the filtration by $\{\wi{\mathscr{F}}_t\}_{t\geq0}$ satisfying the usual conditions and generated by $\{\wi{\X}^{m}_{\g,n}(l),\wi{\X}_{\g}^m(l),\wi{\W}(l): l \leq t\}$.
	
	Then, $\wi{\W}(\cdot)$ is an $\{\wi{\mathscr{F}}_t\}$-cylindrical Wiener process on the Hilbert space $\U$. The equation \eqref{4.007} is satisfied by the random vector $(\wi{\X}_{\g,n}^m,\wi{\W}^n)=(\wi{\X}_{\g,n}^m,\wi{\W})$, and hence it follows that 
	\begin{align}\label{4.58}\nonumber
		\wi{\X}_{\g,n}^m(t)&=\PP_n\boldsymbol{y}_0+\int_0^t \A^n(s,\wi{\X}_{\g,n}^m(s))\d s+\int_0^t \s_1^n(\wi{\X}_{\g,n}^m(s))\Pi_n\d \wi{\W}(s)\\&\quad+\int_0^t \s_2^n(\wi{\X}_{\g,n}^m(s))\dot{\W}^m(s)\d s +\int_0^t \s_3^n(\wi{\X}_{\g,n}^m(s))\g(s)\d s-\int_0^t \G^n(\wi{\X}_{\g,n}^m(s))\d s,
	\end{align}for $ t\in(0,T)$. It also satisfies the energy estimate \eqref{4.8}, that is, for any $p>\max \big\{\frac{\beta}{\beta-1},2\big\}$, we have
	\begin{align}\label{4.59}
		\sup_{n\in\N}\left\{\wi{\E}\bigg[\sup_{t\in[0,T]}\|\wi{\X}_{\g,n}^m(t)\|_\H^p\bigg]+\wi{\E}\bigg[\int_0^T\|\wi{\X}_{\g,n}^m(t)\|_\V^\beta\d t\bigg]^{\frac{p}{2}}\right\}\leq C_m<\infty. 
	\end{align}Using the fact that $\|\cdot\|_\H$ and $\|\cdot\|_\V$ are lower semicontinuous in $\V^*$, the convergence \eqref{4.57} and Fatou's lemma yield 
	\begin{align}\label{4.60}\nonumber
			\wi{\E}\bigg[\sup_{t\in[0,T]}\|\wi{\X}_{\g}^m(t)\|_\H^p\bigg] &\leq 	\wi{\E}\bigg[\sup_{t\in[0,T]}\liminf_{n\to\infty}\|\wi{\X}_{\g,n}^m(t)\|_\H^p\bigg]\leq 	\wi{\E}\bigg[\liminf_{n\to\infty}\sup_{t\in[0,T]}\|\wi{\X}_{\g,n}^m(t)\|_\H^p\bigg]\\&\leq \liminf_{n\to\infty}	\wi{\E}\bigg[\sup_{t\in[0,T]}\|\wi{\X}_{\g,n}^m(t)\|_\H^p\bigg]\leq C_m<\infty.
	\end{align}
	Similarly, from \eqref{4.59}, we can find 
	\begin{align}\label{4.61}
		\wi{\E}\bigg[\int_0^T\|\wi{\X}_{\g}^m(t)\|_\V^\beta\d t\bigg]^{\frac{p}{2}}\leq C_m<\infty.
	\end{align}

	Using \eqref{4.59}, Hypothesis \ref{hyp1} (H.4) and (H.5), and the Banach-Alaoglu theorem for each $m\in\N$, there exists a subsequence indexed by $n_k(m)\to\infty$ (denoting by $n_k$), we obtain the following weak convergences: 
	\begin{equation}\label{4.62}
		\left\{
		\begin{aligned}
			\wi{\X}_{\g,n_k}^m &\xrightharpoonup{\ast} \bar{\X}_{\g}^m, &&\text{ in } \L^p(\Omega;\L^\infty(0,T;\H)),\\
			\wi{\X}_{\g,n_k}^m &\xrightharpoonup{} \bar{\X}_{\g}^m, &&\text{ in } \mathcal{M},
		\end{aligned} 
				\right.
		\end{equation}
	and
		\begin{equation}\label{462}
		\left\{
		\begin{aligned}
			\A^{n_k}(\cdot,\wi{\X}_{\g,n_k}^m) &\xrightharpoonup{} \mathscr{N}_{\g}^m, &&\text{ in }  \mathcal{M}^*,\\
			\s_2^{n_k}(\wi{\X}_{\g,n_k}^m)\dot{\W}^m&\xrightharpoonup{} \mathscr{M}_{\g}^m, &&\text{ in }\mathcal{M}^*,\\
			\s_3^{n_k}(\wi{\X}_{\g,n_k}^m)\g &\xrightharpoonup{} \mathscr{P}_{\g}^m, &&\text{ in }\mathcal{M}^*,\\
			\G^{n_k}(\wi{\X}_{\g,n_k}^m) &\xrightharpoonup{} \mathscr{Q}_{\g}^m, &&\text{ in }\mathcal{M}^*,\\
			\s_1^{n_k}(\wi{\X}_{\g,n_k}^m) &\xrightharpoonup{} \mathscr{R}_{\g}^m, &&\text{ in }\mathcal{H}.
		\end{aligned} 
		\right.
	\end{equation}Therefore, we have 
	\begin{align*}	\int_0^{\cdot}\s_1^{n_k}(\wi{\X}_{\g,n_k}^m)\Pi_{n_k}\d\W \xrightharpoonup{} 	\int_0^{\cdot} \mathscr{R}_{\g}\d \W, \ \ \text{ in }\ \  \L^2(\Omega;\L^\infty(0,T;\H)).
	\end{align*} Let us fix 
	\begin{align}\label{4.63}
		\vi{\X}_{\g}^m(t):= \boldsymbol{y}_0+\int_0^t\left(\mathscr{N}_{\g}^m(s)+\mathscr{M}_{\g}^m(s)+\mathscr{P}_{\g}^m(s)-\mathscr{Q}_{\g}^m(s)\right)\d s+\int_0^{t} \mathscr{R}_{\g}^m(s)\d \W(s).
	\end{align}Using the weak limit defined in \eqref{4.62}, one can verify that (cf. \cite{CPMR,MRSSTZ} or \cite{AKMTM4})
	\begin{align*}
		\vi{\X}_{\g}^m(t,\omega)=\bar{\X}_{\g}^m(t,\omega)=\wi{\X}_{\g}^m(t,\omega), \ \ \text{ for } \ \ \d t\otimes\P\text{-a.e.} \ (t,\omega).
	\end{align*}
	
	Let us now  show our main results in the filtered probability space $(\wi{\Omega},\wi{\mathscr{F}}_t,\{\wi{\mathscr{F}}_t\}_{t\geq0},\wi{\P})$. From now onward we drop the hat notation, for example, we write $\{\wi{\X}_{\g,n}^m\}_{n\in\N}$ and $\wi{\X}_{\g}^m$ as $\{\X_{\g,n}^m\}_{n\in\N}$ and $\X_{\g}^m$, respectively. Thus, we can rewrite \eqref{4.57} as 
	\begin{align}\label{4.64}
		\|\X_{\g,n}^m-\X_{\g}^m\|_{\L^\beta(0,T;\H)}+\|\X_{\g,n}^m-\X_{\g}^m\|_{\C([0,T];\V^*)}\to0, \ \ \wi{\P}\text{-a.s.}
	\end{align}
	
	Using \cite[Theorem 4.2.5]{WLMR2}, we obtain that $\X_{\g}^m(\cdot)$ is an $\H$-valued continuous $\{\mathscr{F}_t\}_{t\geq0}$-adapted process. Thus, it remains to show the following:
	\begin{equation}\label{4.65}
		\left\{
		\begin{aligned}
			\A(\cdot,\X_{\g}^m)+\s_2(\X_{\g}^m)\dot{\W}^m+\s_3(\X_{\g}^m)\g-\G(\X_{\g}^m)&=\mathscr{N}_{\g}^m+\mathscr{M}_{\g}^m+\mathscr{P}_{\g}^m-\mathscr{Q}_{\g}^m,\\
			\s_1(\X_{\g}^m)&=\mathscr{R}_{\g}^m, \ \ \ \ \d t\otimes\P.
		\end{aligned}
		\right.
	\end{equation}	Note that $	\|\X_{\g,n}^m-\X_{\g}\|_{\L^\beta(0,T;\H)}\to0$, as $n\to\infty$. In view of  \eqref{4.58}, \eqref{4.59} and Vitali's convergence theorem, we get 
	\begin{align}\label{4.66}
		\lim_{n\to\infty}\E\bigg[\int_0^T\|\X_{\g,n}^m(t)-\X_{\g}^m(t)\|_\H^\varrho\d t\bigg]=0, \ \ \text{ for all } \ \ \varrho\in[1,\beta],
	\end{align}since for all $1<p<\infty$, we have 
	\begin{align*}
		\E\bigg[\bigg(\int_0^T\|\X_{\g,n}^m(t)\|_\H^\beta\d t\bigg)^p\bigg]\leq T^p\E\bigg[\sup_{t\in[0,T]}\|\X_{\g,n}^m(t)\|_\H^{ p\beta }\bigg]\leq C_m<\infty.
	\end{align*}Therefore, there exists a subsequence $\{\X_{\g,n}^m\}_{n\in\N}$ (still denoting by the same index), we have the following convergence:
	\begin{align}\label{4.67}
		\lim_{n\to\infty}\|\X_{\g,n}^m(t,\omega)-\X_{\g}^m(t,\omega)\|_\H=0, \  \text{ a.e. } \ (t,\omega)\in [0,T]\times \Omega.
	\end{align}
	Now, we verify the limit in the following series of lemmas.
	\begin{lemma}\label{lem4.4}
			$\s_1(\X_{\g}^m(\cdot))=\mathscr{R}_{\g}^m(\cdot)$, $\d t\otimes\P$-a.e.
	\end{lemma}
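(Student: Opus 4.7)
The plan is to upgrade the weak convergence $\s_1^{n_k}(\X_{\g,n_k}^m) \rightharpoonup \mathscr{R}_{\g}^m$ in $\mathcal{H}$ from \eqref{462} to strong convergence in $\mathcal{H}$, identifying the limit as $\s_1(\X_{\g}^m)$; uniqueness of the weak limit in the Hilbert space $\mathcal{H}$ then yields $\s_1(\X_{\g}^m) = \mathscr{R}_{\g}^m$, $\d t \otimes \P$-a.e.

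First, I would exploit the pointwise a.e.\ convergence $\|\X_{\g,n_k}^m(t,\omega) - \X_{\g}^m(t,\omega)\|_\H \to 0$ from \eqref{4.67}. The local Lipschitz bound for $\s$ implied by \eqref{1.10}--\eqref{1.12} of Hypothesis \ref{hyp1} (together with the $\H$-boundedness of $\X_{\g,n_k}^m(t,\omega)$ and $\X_{\g}^m(t,\omega)$, which follows from \eqref{4.59}--\eqref{4.60}) yields the pointwise convergence $\s_1(\X_{\g,n_k}^m(t,\omega)) \to \s_1(\X_{\g}^m(t,\omega))$ in $\L_2(\U,\H)$ for a.e.\ $(t,\omega)$. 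To incorporate the finite-dimensional approximation $\s_1^{n_k}(\cdot) = \PP_{n_k}\s_1(\PP_{n_k}\cdot)$, I would combine the previous step with the strong convergence $\PP_n \to I$ on $\H$ and the elementary fact that $\PP_n T \to T$ in $\L_2(\U,\H)$ whenever $T \in \L_2(\U,\H)$ (by dominated convergence applied to the Hilbert--Schmidt expansion, using $\|\PP_n\|_{\mathcal{L}(\H)} \le 1$); this produces $\s_1^{n_k}(\X_{\g,n_k}^m(t,\omega)) \to \s_1(\X_{\g}^m(t,\omega))$ in $\L_2(\U,\H)$ for a.e.\ $(t,\omega)$.

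To lift this pointwise convergence to norm convergence in $\mathcal{H} = \L^2(\Omega;\L^2(0,T;\L_2(\U,\H)))$, I would invoke Vitali's convergence theorem. The required uniform integrability of $\|\s_1^{n_k}(\X_{\g,n_k}^m)\|_{\L_2}^2$ over $[0,T]\times\Omega$ follows from the growth bound \eqref{1.15} combined with the uniform $p$-th moment estimate \eqref{4.59} for some $p > 2$. The resulting strong convergence in $\mathcal{H}$ entails weak convergence to $\s_1(\X_{\g}^m)$, and matching with the weak limit $\mathscr{R}_{\g}^m$ in \eqref{462} delivers the desired identity. The only delicate step is really the Vitali argument: after the routine continuity-plus-projection manipulations above, the one thing that could fail is uniform integrability, and this is precisely where the moment exponent $p > 2$ built into Theorem \ref{thrm4.1} (through \eqref{4.59}) becomes essential.
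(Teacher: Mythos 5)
Your proposal is correct and follows essentially the same route as the paper: both split $\s_1^{n}(\X_{\g,n}^m)-\s_1(\X_{\g}^m)$ into a local-Lipschitz piece handled via the a.e.\ convergence \eqref{4.67} and a projection-error piece, prove strong convergence in $\mathcal{H}$, and conclude by uniqueness of the weak limit against \eqref{462}. If anything, you are more careful than the paper, which passes the limit under $\E\int_0^T$ without spelling out the uniform-integrability (Vitali) argument that you correctly identify as the one nontrivial step.
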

	\begin{proof}
		Using Hypothesis \ref{hyp1} (H.5) and \eqref{4.67}, we deduce 
		\begin{align}\label{4.68}
			\lim_{n\to\infty}\E\bigg[\int_0^T \|\s_1^n(\X_{\g,n}^m(t))-\s_1(\X_{\g}^m(t))\|_{\L_2}^2\d t\bigg]=0. 
		\end{align}By \eqref{4.63} and the uniqueness of weak limit implies the required result, that is, $\s_1^m(\cdot)=\mathscr{R}_{\g}^m(\cdot)$, $\d t\otimes\P$-a.e. The convergence in \eqref{4.68} can be verified as follows:
		\begin{align}\label{4.69}\nonumber
			&\E\bigg[\int_0^T \|\s_1^n(\X_{\g,n}^m(t))-\s_1(\X_{\g}^m(t))\|_{\L_2}^2\d t\bigg]\\&\nonumber\leq 2\E\bigg[\int_0^T \|\s_1^n(\X_{\g,n}^m(t))-\s_1^n(\X_{\g}^m(t))\|_{\L_2}^2\d t\bigg]+ 2\E\bigg[\int_0^T \|(\I-\PP_n)\s_1(\X_{\g}^m(t))\|_{\L_2}^2\d t\bigg] \\&\to 0, \ \ \text{ as } \ \ n\to\infty, 
		\end{align}where we have used Hypothesis \ref{hyp1} (H.2), \eqref{4.67} in the first term of the right hand side of the above inequality and Lebesgue dominated convergence theorem for the final term in the right hand side of \eqref{4.69}, since we know that $\|\I-\PP_n\|_{\mathcal{L}(\H)}\to0$, as $n\to\infty$.
	\end{proof}
	\begin{lemma}\label{lem4.5}
		$\s_2(\X_{\g}^m(\cdot))\dot{\W}^m(\cdot)=\mathscr{M}_{\g}^m, \ \d t\otimes\P$-a.e.
	\end{lemma}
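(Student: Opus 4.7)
The strategy mirrors that of Lemma \ref{lem4.4}: since $\mathscr{M}_{\g}^m$ is already identified as a weak limit in $\mathcal{M}^*$, it suffices to prove that $\s_2^{n_k}(\X_{\g,n_k}^m)\dot{\W}^m$ converges strongly to $\s_2(\X_{\g}^m)\dot{\W}^m$ in a topology weaker than that of $\mathcal{M}^*$ (for instance, in $\L^{r}((0,T)\times\wi{\Omega};\H)$ for some $r\geq 1$), and then invoke uniqueness of the weak limit. For a fixed trajectory the process $\dot{\W}^m(\cdot,\wi{\omega})$ is a step function with at most $2^m$ values in the finite-dimensional space $\U_m$, so after controlling $\|\dot{\W}^m\|_\U$ by its Gaussian moments (as in \eqref{4.17}), the factor $\dot{\W}^m$ will cause no essential trouble.

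The key decomposition is
\begin{align*}
\s_2^n(\X_{\g,n}^m)\dot{\W}^m-\s_2(\X_{\g}^m)\dot{\W}^m
&= \big[\s_2^n(\X_{\g,n}^m)-\s_2^n(\X_{\g}^m)\big]\dot{\W}^m
+ \big[\s_2^n(\X_{\g}^m)-\s_2(\X_{\g}^m)\big]\dot{\W}^m,
\end{align*}
whence $\|\cdot\|_\H$ is majorised by
$\big(\|\s_2^n(\X_{\g,n}^m)-\s_2^n(\X_{\g}^m)\|_{\L_2}+\|\s_2^n(\X_{\g}^m)-\s_2(\X_{\g}^m)\|_{\L_2}\big)\|\dot{\W}^m\|_\U$.
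For the first summand I would apply the local Lipschitz bound \eqref{1.10} in Hypothesis \ref{hyp1} (H.2) together with the boundedness of $\PP_n,\Pi_n$, which yields a bound of the form $(\kappa(\X_{\g,n}^m)+\varkappa(\X_{\g}^m))^{1/2}\|\X_{\g,n}^m-\X_{\g}^m\|_\H$; this tends to zero a.e.\ by the pointwise convergence \eqref{4.67}. For the second summand, I would invoke the approximation property in Hypothesis \ref{hyp2} (H.6), which gives $\sup_{\|\x\|_\H\le M}\|\s(\x)-\s(\x)\circ \Pi_n\|_{\L_2}\to 0$, to conclude a.e.\ vanishing as well.

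Having established a.e.\ (in $(t,\wi{\omega})$) convergence to zero of the integrand, I would use Vitali's convergence theorem to upgrade it to convergence in $\L^{r}((0,T)\times\wi{\Omega};\H)$ for some $r>1$. The uniform integrability needed for Vitali follows by combining the moment bounds \eqref{4.12}, the sublinear growth \eqref{1.12} of $\kappa,\varkappa$, the bound \eqref{4.16} on $\|\s_2(\x)\dot{\W}^m\|_\H^{\beta/(\beta-1)}$, the Gaussian moment estimate \eqref{4.17} for $\dot{\W}^m$, and the uniform-in-$n$ energy estimate \eqref{4.59}; the exponent $p>\max\{\tfrac{\beta}{\beta-1},2\}$ allows one to extract an exponent strictly larger than $1$ for which the family $\{\|\s_2^n(\X_{\g,n}^m)\dot{\W}^m-\s_2(\X_{\g}^m)\dot{\W}^m\|_\H^{r}\}_{n\in\N}$ is uniformly integrable on $(0,T)\times\wi{\Omega}$.

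Strong convergence in $\L^{r}((0,T)\times\wi{\Omega};\H)$ clearly implies convergence in the distributional duality against any test element of the pre-dual of $\mathcal{M}^*$, so together with the weak convergence $\s_2^{n_k}(\X_{\g,n_k}^m)\dot{\W}^m\rightharpoonup \mathscr{M}_{\g}^m$ in $\mathcal{M}^*$ from \eqref{462}, uniqueness of the limit forces $\mathscr{M}_{\g}^m=\s_2(\X_{\g}^m)\dot{\W}^m$, $\d t\otimes\wi{\P}$-a.e. The main obstacle is the verification of uniform integrability in Vitali's theorem: one must simultaneously control the unboundedness of $\dot{\W}^m$ (handled via its Gaussian moments from \eqref{4.17}) and the growth of $\kappa,\varkappa$ in the local Lipschitz estimate (handled through the $\L^p$-energy bound \eqref{4.59}), and then carefully match exponents using $p>\tfrac{\beta}{\beta-1}$ so that the resulting product is integrable with a margin above $1$.
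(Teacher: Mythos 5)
Your proposal follows essentially the same route as the paper: the same two-term decomposition, the local Lipschitz bound \eqref{1.10} combined with the pointwise convergence \eqref{4.67} for the first summand, the Gaussian moment bound \eqref{4.17} to control $\|\dot{\W}^m\|_{\U}$, strong convergence of the difference in a topology weaker than $\mathcal{M}^*$, and then uniqueness of the weak limit from \eqref{462}. One small correction: the second summand $\s_2^n(\X_{\g}^m)-\s_2(\X_{\g}^m)$ involves the Galerkin projection $\PP_n$ on $\H$, not $\Pi_n$ on $\U$, so the relevant tool is the strong convergence $\PP_n\to\I$ on $\H$ together with dominated convergence (as in \eqref{4.69}), rather than the $\s-\s\circ\Pi_n$ approximation property of Hypothesis (H.6).
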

	\begin{proof}Using Young's inequality, Hypothesis \ref{hyp1} (H.2) and \eqref{4.17}, we obtain 
		\begin{align}\label{4.70}\nonumber
			&\E\bigg[\int_0^T\|\s_2^n(\X_{\g,n}^m(t))\dot{\W}^m(t)-\s_2(\X_{\g}^m(t))\dot{\W}^m(t)\|_\H^2\d t\bigg]\\&\nonumber=\E\bigg[\int_0^T\|(\s_2^n(\X_{\g,n}^m(t))-\s_2(\X_{\g}^m(t)))\dot{\W}^m(t)\|_\H^2\d t\bigg]\\&\nonumber\leq 2\E\bigg[\int_0^T\|(\s_2^n(\X_{\g,n}^m(t))-\s_2^n(\X_{\g}^m(t)))\dot{\W}^m(t)\|_\H^2\d t\bigg]\\&\nonumber\quad +2\E\bigg[\int_0^T\|(\I-\PP_n)\s_2(\X_{\g}^m(t))\dot{\W}^m(t)\|_\H^2\d t\bigg]\\&\nonumber\leq  2\E\bigg[\int_0^T \|\s_2^n(\X_{\g,n}^m(t))-\s_2^n(\X_{\g}^m(t))\|_{\L_2}^2\|\dot{\W}^m(t)\|_\U^2\d t\bigg]\\&\nonumber\quad +2\E\bigg[\int_0^T\|(\I-\PP_n)\s_2(\X_{\g}^m(t))\|_{\L_2}^2\|\dot{\W}^m(t)\|_\U^2\d t\bigg]\\&\to0, \ \ \text{ as } \ \ n\to\infty,
		\end{align}
	Arguments similar to \eqref{4.69} and the uniqueness of weak limit completes the proof.
	\end{proof}
	\begin{lemma}\label{lem4.6}
		$\s_3(\X_{g}^m(\cdot))\g(\cdot)=\mathscr{P}_{\g}^m(\cdot)$ and $\G(\X_{\g}^m(\cdot))=\mathscr{Q}_{\g}^m(\cdot), \ \d t\otimes\P$-a.e.
	\end{lemma}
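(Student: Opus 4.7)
The plan is to mirror the strategy used in Lemmas \ref{lem4.4} and \ref{lem4.5}: establish strong convergence of $\s_3^n(\X_{\g,n}^m)\g$ and $\G^n(\X_{\g,n}^m)$ in appropriate Bochner spaces, then invoke the uniqueness of weak limits together with \eqref{462} to identify $\mathscr{P}_{\g}^m$ and $\mathscr{Q}_{\g}^m$.

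First I would treat the $\s_3$ term. By the triangle inequality,
\begin{align*}
\|\s_3^n(\X_{\g,n}^m(t))\g(t)-\s_3(\X_{\g}^m(t))\g(t)\|_\H
&\leq \|\s_3^n(\X_{\g,n}^m(t))-\s_3^n(\X_{\g}^m(t))\|_{\L_2}\|\g(t)\|_\U\\
&\quad +\|(\I-\PP_n)\s_3(\X_{\g}^m(t))\|_{\L_2}\|\g(t)\|_\U.
\end{align*}
For the first summand, Hypothesis \ref{hyp1} (H.2) applied to $\s_3$ gives the bound $(\kappa(\X_{\g,n}^m)+\varkappa(\X_{\g}^m))\|\X_{\g,n}^m-\X_{\g}^m\|_\H^2$, which tends to zero pointwise $\d t\otimes\P$-a.e. by \eqref{4.67}, while the growth estimate \eqref{1.12} combined with the uniform moment bound of Lemma \ref{lem4.2} and $\g\in\L^2(0,T;\U)$ supplies the uniform integrability required by Vitali's convergence theorem. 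The second summand vanishes by the dominated convergence theorem since $\|\I-\PP_n\|_{\mathcal{L}(\H)}\to 0$, exactly in the spirit of \eqref{4.69}. Hence $\s_3^n(\X_{\g,n}^m)\g\to \s_3(\X_{\g}^m)\g$ strongly in $\L^1(\Omega;\L^1(0,T;\H))$; the uniqueness of weak limits in $\mathcal{M}^*$ then yields $\mathscr{P}_{\g}^m=\s_3(\X_{\g}^m)\g$, $\d t\otimes\P$-a.e.

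For the $\G$ term I would argue in the same fashion, splitting
\begin{align*}
\|\G^n(\X_{\g,n}^m(t))-\G(\X_{\g}^m(t))\|_\H
&\leq \|\G^n(\X_{\g,n}^m(t))-\G^n(\X_{\g}^m(t))\|_\H
+\|(\I-\PP_n)\G(\X_{\g}^m(t))\|_\H.
\end{align*}
The second term goes to zero by the dominated convergence theorem using the growth bound $\|\G(\x)\|_\H^2\leq L(1+\|\x\|_\H^2)$ from Hypothesis \ref{hyp2} (H.7) and the energy estimate \eqref{4.60}. For the first term, the continuity of $\G$ as a map $\H\to\H$ (consequence of Hypothesis \ref{hyp2} (H.6) through the structure $\G=\tfrac{1}{2}\wi{\Tr}_m=\tfrac{1}{2}\sum_i \D\s_i\,\s_i$, with bounded first and second Fr\'echet derivatives on bounded sets of $\H$) together with \eqref{4.67} produces pointwise convergence to zero. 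The same growth bound from (H.7) and the uniform $\L^p$-estimate of Lemma \ref{lem4.2} provides uniform integrability, so Vitali's theorem applies and yields strong convergence in $\L^2(\Omega;\L^2(0,T;\H))$. By uniqueness of weak limits in $\mathcal{M}^*$, we conclude $\mathscr{Q}_{\g}^m=\G(\X_{\g}^m)$, $\d t\otimes\P$-a.e.

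The main obstacle is verifying the uniform integrability hypothesis of Vitali's theorem, especially for the $\G$ contribution, since Hypothesis \ref{hyp2} (H.7) furnishes only a one-sided monotonicity-type bound rather than a Lipschitz estimate. The resolution is to combine the growth bound in (H.7) with the smoothness of the $\s_i$ guaranteed by (H.6): this promotes the pointwise convergence $\X_{\g,n}^m\to\X_{\g}^m$ along a subsequence to convergence of $\G^n(\X_{\g,n}^m)$ at the level of $\H$, while the uniform $\L^{p/2}(\Omega;\L^\infty(0,T;\R))$-bound on $1+\|\X_{\g,n}^m\|_\H^2$ (available since $p>2$) controls a super-linear moment and secures uniform integrability in $\L^2(\Omega\times[0,T];\H)$. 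Once this is in place, the identification follows immediately from the uniqueness of weak limits in \eqref{462}.
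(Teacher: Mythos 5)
Your proposal is correct and follows exactly the route the paper intends: the paper's own ``proof'' of this lemma is a single sentence declaring that it follows the same lines as Lemmas \ref{lem4.4} and \ref{lem4.5}, and your argument for the $\s_3$ term is precisely that argument (split off $(\I-\PP_n)\s_3(\X_{\g}^m)\g$, use \eqref{1.10} and \eqref{4.67} on the difference, and close with Vitali and uniqueness of weak limits in $\mathcal{M}^*$). Where you go beyond the paper is in the $\G$ term, and your caution there is well placed: Hypothesis \ref{hyp2} (H.7) supplies only the growth bound $\|\G(\x)\|_\H^2\leq L(1+\|\x\|_\H^2)$ and a one-sided monotonicity estimate, so the literal transcription of the Lemma \ref{lem4.4} argument has no Lipschitz-type inequality to convert \eqref{4.67} into pointwise convergence of $\G^n(\X_{\g,n}^m)$ --- a point the paper silently skips. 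Your resolution, invoking the concrete structure $\G=\tfrac{1}{2}\wi{\Tr}_m=\tfrac{1}{2}\sum_i\D\s_i\,\s_i$ together with the uniform bounds on $\s_i$, $\D\s_i$, $\D^2\s_i$ from (H.6) to obtain local Lipschitz continuity of $\G$ on bounded sets of $\H$, is sound for every instance in which the paper actually uses Theorem \ref{thrm4.1} (namely $\G=0$ or $\G=\tfrac{1}{2}\wi{\Tr}_m$); just note that for the \emph{general} $\G$ of system \eqref{4.4}, which is only assumed to satisfy (H.7), some continuity hypothesis of this kind must be added (or $\G$ must be absorbed into the pseudo-monotonicity argument of Lemma \ref{lem4.7}), so your version is, if anything, more honest than the paper's.
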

	\begin{proof}
		The proof of this lemma will follows on the similar lines as in Lemmas \ref{lem4.4} and \ref{lem4.5}, so we are omitting here.
	\end{proof}Now, let us recall a convergence result from \cite{MRSSTZ}.
	\begin{lemma}[{\cite[Lemma 2.16]{MRSSTZ}}]\label{lem4.7}
		For each $m\in\N,$ we have \begin{align}\label{4.71}\nonumber
			\X_{\g,n}^m &\xrightharpoonup{\ast} \X_{\g}^m, \ \ \text{in}\ \ \mathcal{M},\\ \nonumber
			\A^n(\cdot,\X_{\g,n}^m(\cdot)) &\xrightharpoonup{} \mathscr{N}_{\g}^m(\cdot), \ \ \text{in} \ \ \mathcal{M}^*,\\
			\liminf_{n\to\infty} \E\bigg[\int_0^T\langle \A(t,\X_{\g,n}^m(t)),\X_{\g,n}^m(t)\rangle \d t\bigg] &\geq \E\bigg[\int_0^T\langle \mathscr{N}_{\g}^m(t),\X_{\g}^m(t)\rangle \d t\bigg],
		\end{align}then $\mathscr{N}_{\g}^m(\cdot)=\A(\cdot,\X_{\g}^m(\cdot)),\ \d t\otimes\P$-a.e.
	\end{lemma}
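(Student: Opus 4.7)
The plan is to establish the identification $\mathscr{N}_{\g}^m=\A(\cdot,\X_{\g}^m)$ by a weighted Minty--Browder trick adapted to the fully local monotonicity of (H.2), in the spirit of \cite{MRSSTZ}. The three hypotheses listed in the lemma are precisely the inputs that a pseudo-monotone identification argument requires, so the work is in handling the extra term $\rho(\X_{\g,n}^m)$ on the right of \eqref{1.9}, which depends on the approximating solution and therefore cannot be treated as a purely exterior weight.

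First, I would fix a test process $\Y$ belonging to a dense subclass of $\mathcal{M}$, say $\Y\in\L^{\infty}(\Omega\times[0,T];\V)$, and apply (H.2) pointwise in $t$ with $\x_1=\X_{\g,n}^m(t)$ and $\x_2=\Y(t)$, obtaining
\begin{equation*}
2\langle \A(t,\X_{\g,n}^m)-\A(t,\Y),\X_{\g,n}^m-\Y\rangle\leq\bigl(f(t)+\rho(\X_{\g,n}^m)+\eta(\Y)\bigr)\|\X_{\g,n}^m-\Y\|_{\H}^{2}.
\end{equation*}
I would then introduce a positive multiplier of the form $\Psi_n(t):=\exp\!\bigl(-\int_0^t[f(s)+\eta(\Y(s))+\Xi_n(s)]\,\d s\bigr)$, where $\Xi_n(s)$ is a dominating control for $\rho(\X_{\g,n}^m(s))$ extracted from the growth bound \eqref{1.11} in (H.2) together with the uniform moment estimate of Lemma \ref{lem4.2}. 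The bound $|\rho(\x)|\leq C(1+\|\x\|_{\V}^{\beta})(1+\|\x\|_{\H}^{\zeta})$ with $p>\frac{\beta}{\beta-1}$ ensures $\Psi_n$ stays uniformly bounded and bounded below away from zero on the relevant sets.

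Next, I would multiply the monotonicity inequality by $\Psi_n(t)$, integrate over $[0,T]$, take expectations, and rewrite the right-hand side as a telescoping/total-derivative term so that the expression reduces to
\begin{equation*}
2\,\E\!\int_0^T\!\Psi_n(t)\,\langle \A(t,\X_{\g,n}^m)-\A(t,\Y),\X_{\g,n}^m-\Y\rangle\,\d t\leq R_n(\Y),
\end{equation*}
where $R_n(\Y)\to R(\Y)$ by standard computations. Passing to $\liminf_{n\to\infty}$ I would combine the four ingredients: the weak-$*$ convergence $\X_{\g,n}^m\xrightharpoonup{*}\X_{\g}^m$ in $\mathcal{M}$ for the terms linear in $\X_{\g,n}^m$ paired with $\A(\cdot,\Y)$, the weak convergence $\A^n(\cdot,\X_{\g,n}^m)\xrightharpoonup{}\mathscr{N}_{\g}^m$ in $\mathcal{M}^*$ for the terms linear in $\A$ paired with $\Y$, the $\liminf$ hypothesis for the critical cross term $\E\int\Psi\langle\A(t,\X_{\g,n}^m),\X_{\g,n}^m\rangle\d t$, and the strong convergence \eqref{4.66} in $\L^{\beta}(0,T;\H)$ via Vitali's theorem to handle $\|\X_{\g,n}^m-\Y\|_{\H}^{2}$. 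The outcome is
\begin{equation*}
\E\!\int_0^T\!\Psi(t)\,\langle\mathscr{N}_{\g}^m-\A(t,\Y),\X_{\g}^m-\Y\rangle\,\d t\leq 0
\end{equation*}
for every admissible $\Y$, with $\Psi$ the limiting weight.

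To close the argument I would specialise $\Y=\X_{\g}^m+\lambda\bphi$ with $\bphi\in\L^{\infty}(\Omega\times[0,T];\V)$ and $\lambda\in\R$, divide by $\lambda$, and let $\lambda\to 0^{\pm}$, invoking the hemicontinuity (H.1) together with dominated convergence (using that $\Psi$ stays uniformly bounded as $\lambda\to 0$) to obtain $\E\int_0^T\Psi(t)\langle\mathscr{N}_{\g}^m-\A(t,\X_{\g}^m),\bphi\rangle\d t=0$, whence the desired equality $\mathscr{N}_{\g}^m=\A(\cdot,\X_{\g}^m)$, $\d t\otimes\P$-a.e., follows from the arbitrariness of $\bphi$ and the positivity of $\Psi$. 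The main obstacle is the construction of the weight $\Psi_n$: because $\rho$ is evaluated at the unknown $\X_{\g,n}^m$ rather than at the test function $\Y$, one cannot simply use a deterministic multiplier, and a careful use of the uniform estimate \eqref{4.8} is needed to guarantee that $\Xi_n$ is integrable uniformly in $n$ and converges to a workable limit. A secondary technical point is the density step that passes the identification from bounded test processes $\Y$ to all of $\mathcal{M}$.
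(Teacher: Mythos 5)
Your proposal attacks the identification with the classical weighted Minty--Browder trick, but the paper does not prove this lemma at all: it is quoted verbatim from \cite[Lemma 2.16]{MRSSTZ}, whose argument is based on showing that Hypothesis \ref{hyp1} makes the map $\Y\mapsto\A(\cdot,\Y)$ \emph{pseudo-monotone} and then invoking the definition of pseudo-monotonicity together with the strong convergence $\X_{\g,n}^m\to\X_{\g}^m$ in $\L^\beta(0,T;\H)$ already secured by the Skorokhod step (see \eqref{4.66}--\eqref{4.67}). That route needs no exponential weight, which is precisely why it works in the fully local monotone setting.

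The gap in your version is the one you yourself flag as ``the main obstacle,'' and it is not resolved: the multiplier $\Psi_n(t)=\exp\bigl(-\int_0^t[f+\eta(\Y)+\Xi_n]\,\d s\bigr)$ must satisfy $\Xi_n(s)\geq\rho(\X_{\g,n}^m(s))$ pathwise, so by \eqref{1.11} the natural choice is $\Xi_n=C(1+\|\X_{\g,n}^m\|_\V^\beta)(1+\|\X_{\g,n}^m\|_\H^\zeta)$. Lemma \ref{lem4.2} only controls $\E\int_0^T\Xi_n\,\d s$ uniformly in $n$; it does not make $\Psi_n$ bounded below away from zero (that would require a pathwise uniform bound), and more fatally it does not make $\Psi_n$ converge: since $\X_{\g,n}^m$ converges only weakly in $\L^\beta(0,T;\V)$, one has at best $\liminf_n\int_0^t\|\X_{\g,n}^m\|_\V^\beta\,\d s\geq\int_0^t\|\X_{\g}^m\|_\V^\beta\,\d s$, so the weights $\Psi_n$ have no identifiable limit $\Psi$ and the passage $R_n(\Y)\to R(\Y)$, as well as the limiting inequality $\E\int_0^T\Psi\langle\mathscr{N}_{\g}^m-\A(t,\Y),\X_{\g}^m-\Y\rangle\,\d t\leq0$, is not justified. (Localizing by stopping times does not help, because the set where $\int_0^t\Xi_n\,\d s$ is large is not controlled uniformly in $n$ in a pathwise sense, and the telescoping cancellation you invoke is the one that appears in It\^o-formula uniqueness arguments such as Theorem \ref{thrm2.9}, not in the Minty integration against a fixed test process.) This failure of the weighted Minty trick when $\rho$ is evaluated at the unknown approximating solution is exactly the obstruction that kept the fully local monotone case open, and the correct fix---the one used in the cited Lemma 2.16---is to trade the weight for pseudo-monotonicity plus the compactness-supplied strong $\L^\beta(0,T;\H)$ convergence.
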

	Let us establish the existence of a probabilistically weak solution to the problem \eqref{4.4}.
	\begin{theorem}\label{thrm2.8}
		There exists a probabilistically weak solution to the problem \eqref{4.4} which satisfies the uniform energy estimate \eqref{4.8}.	
	\end{theorem}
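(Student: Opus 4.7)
The plan is to combine the Skorokhod set-up already built with the identifications of the weak limits in Lemmas \ref{lem4.4}--\ref{lem4.7}. Since \eqref{4.63} already expresses
\begin{align*}
\X_{\g}^m(t) = \boldsymbol{y}_0 + \int_0^t\bigl(\mathscr{N}_{\g}^m(s) + \mathscr{M}_{\g}^m(s) + \mathscr{P}_{\g}^m(s) - \mathscr{Q}_{\g}^m(s)\bigr)\d s + \int_0^t \mathscr{R}_{\g}^m(s)\d \W(s),
\end{align*}
and Lemmas \ref{lem4.4}--\ref{lem4.6} already identify four of the five limits as $\s_1(\X_{\g}^m)$, $\s_2(\X_{\g}^m)\dot\W^m$, $\s_3(\X_{\g}^m)\g$ and $\G(\X_{\g}^m)$ in the $\d t\otimes\P$-a.e.\ sense, the only task remaining for existence is to verify the fifth identification $\mathscr{N}_{\g}^m=\A(\cdot,\X_{\g}^m)$. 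For this I would invoke Lemma \ref{lem4.7}, whose first two hypotheses are already contained in \eqref{4.62}, so the one substantive step is the liminf inequality
\begin{align*}
\liminf_{n\to\infty}\E\int_0^T \langle \A(t,\X_{\g,n}^m(t)),\X_{\g,n}^m(t)\rangle \d t \geq \E\int_0^T \langle \mathscr{N}_{\g}^m(t),\X_{\g}^m(t)\rangle \d t.
\end{align*}

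To secure this inequality I would apply the finite-dimensional It\^o formula to $\|\X_{\g,n}^m(\cdot)\|_\H^2$ using equation \eqref{4.58}, take expectation, and rearrange to express $2\E\int_0^T\langle \A^n(s,\X_{\g,n}^m(s)),\X_{\g,n}^m(s)\rangle\d s$ as
\begin{align*}
\E\|\X_{\g,n}^m(T)\|_\H^2 - \E\|\PP_n\boldsymbol{y}_0\|_\H^2 - R_n,
\end{align*}
where $R_n$ collects the quadratic variation of the stochastic integral together with the $\s_2^n\dot\W^m$, $\s_3^n\g$ and $\G^n$ contributions. The analogous application of the infinite-dimensional It\^o formula to $\|\X_{\g}^m(\cdot)\|_\H^2$ (as in \cite[Theorem 4.2.5]{WLMR2}) gives the same identity with $\A^n$ replaced by $\mathscr{N}_{\g}^m$, $\PP_n\boldsymbol{y}_0$ replaced by $\boldsymbol{y}_0$, and $R_n$ replaced by the corresponding limit $R$ built from $\s_1(\X_{\g}^m),\s_2(\X_{\g}^m)\dot\W^m,\s_3(\X_{\g}^m)\g$ and $\G(\X_{\g}^m)$. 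Using the strong convergence \eqref{4.66}--\eqref{4.67}, the uniform integrability furnished by the moment bound \eqref{4.59} together with the Hypothesis \ref{hyp1} (H.5) growth and the Gaussian estimate \eqref{4.16}--\eqref{4.17}, and Vitali's theorem, I can pass to the limit in each term of $R_n$ to conclude $R_n\to R$. Combined with weak lower semicontinuity $\E\|\X_{\g}^m(T)\|_\H^2 \leq \liminf_n\E\|\X_{\g,n}^m(T)\|_\H^2$ and $\|\PP_n\boldsymbol{y}_0\|_\H\to\|\boldsymbol{y}_0\|_\H$, this produces the required liminf bound, and Lemma \ref{lem4.7} then delivers $\mathscr{N}_{\g}^m=\A(\cdot,\X_{\g}^m)$.

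The main technical obstacle is handling the piecewise constant random function $\dot\W^m$: the uniform integrability in $(n,\omega)$ of terms involving $\|\s_2^n(\X_{\g,n}^m)\dot\W^m\|_\H$ times other factors is exactly where the standing assumption $p>\tfrac{\beta}{\beta-1}$ must be exploited, via the Gaussian moment computation \eqref{4.17}, to ensure that the discrete It\^o correction in the energy identity passes to the limit. All remaining terms transfer by essentially the same arguments already used in the proofs of Lemmas \ref{lem4.4}--\ref{lem4.6}. Finally, the energy estimate \eqref{4.8} for the limit $\X_{\g}^m$ is immediate from the uniform bound \eqref{4.59} combined with weak and weak-$\ast$ lower semicontinuity of the norms, exactly as carried out in \eqref{4.60}--\eqref{4.61}, so no further argument is needed to complete the proof of the existence of a probabilistically weak solution.
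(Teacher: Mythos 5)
Your proposal follows essentially the same route as the paper: apply the finite- and infinite-dimensional It\^o formulae to $\|\X_{\g,n}^m(\cdot)\|_\H^2$ and $\|\X_{\g}^m(\cdot)\|_\H^2$ to obtain the two energy identities (the paper's \eqref{4.72}--\eqref{4.73}), use lower semicontinuity of $\|\cdot\|_\H$ in $\V^*$ together with Fatou's lemma to compare the left-hand sides, pass to the limit in the remaining terms via Lemmas \ref{lem4.4}--\ref{lem4.6}, and thereby verify the liminf hypothesis of Lemma \ref{lem4.7} to identify $\mathscr{N}_{\g}^m=\A(\cdot,\X_{\g}^m)$, with the energy estimate \eqref{4.8} inherited from \eqref{4.59}--\eqref{4.61}. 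Your write-up simply makes explicit the rearrangement and convergence of the correction terms that the paper compresses into the phrase ``comparing \eqref{4.72} and \eqref{4.73}''.
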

	\begin{proof}
		Our aim is to prove that the limit $\X_{\g}^m(\cdot)$ of the approximated solutions $\{\X_{\g,n}^m\}_{n\in\N}$ obtained above is a probabilistically weak solution to the problem \eqref{4.4}. In order to establish this, we need to verify \eqref{4.71}, using Lemmas \ref{lem4.4}-\ref{lem4.6}. We already have that equations \eqref{4.58} and \eqref{4.63} are satisfied by $ \X_{\g,n}^m(\cdot)$ and $\X_{\g}^m(\cdot)$, respectively.
		
		Applying the finite and infinite-dimensional It\^o's formulae (see \cite[Theorem 2.1]{IGDS} and \cite[Theorem 1]{IGNV}) to the processes $\ \X_{\g,n}^m(\cdot)$ and $\X_{\g}^m(\cdot)$, respectively, and then taking expectations on both sides, we find
		\begin{align}\label{4.72}\nonumber
			\E\big[\|\X_{\g,n}^m(t)\|_\H^2\big]&=\E\big[\|\boldsymbol{y}_0^n\|_\H^2\big]+\E\bigg[\int_0^t \bigg\{\big\langle \A(s,\X_{\g,n}^m(s)),\X_{\g,n}^m(s)\big\rangle +\|\s_1(\X_{\g,n}^m(s))\Pi_n\|_{\L_2}^2\\&\nonumber\qquad+ \big(\s_2^n(\X_{\g,n}^m(s))\dot{\W}^m(s),\X_{\g,n}^m(s)\big)+\big(\s_3^n(\X_{\g,n}^m(s))\g(s),\X_{\g,n}^m(s)\big)\\&\qquad -\big(\G^n(\X_{\g,n}^m(s)),\X_{\g,n}^m(s)\big) \bigg\}\d s\bigg], \\ \nonumber
			\E\big[\|\X_{\g}^m(t)\|_\H^2\big]&=\E\big[\|\boldsymbol{y}_0\|_\H^2\big]+\E\bigg[\int_0^t \bigg\{\big\langle \mathscr{N}_{\g}(s),\X_{\g}^m(s)\big\rangle +\|\mathscr{R}_{\g}^m(s)\|_{\L_2}^2+ \big(\mathscr{M}_{\g}^m(s),\X_{\g}^m(s)\big)\\&\label{4.73}\qquad+\big(\mathscr{P}_{\g}^m(s),\X_{\g}^m(s)\big) -\big(\mathscr{Q}_{\g}^m(s),\X_{\g}^m(s)\big) \bigg\}\d s\bigg].
		\end{align}
	Using the convergence \eqref{4.64}, the lower semicontinuity of $\|\cdot\|_\H$ in $\V^*$ and Fatou's lemma, we arrive at 
	\begin{align}\label{4.74}
				\E \big[\|\X_{\g}^m(t)\|_\H^2\big] \leq 	\E\big[\liminf_{n\to\infty}\|\X_{\g,n}^m(t)\|_\H^2\big]\leq 	\liminf_{n\to\infty}\E\big[\|\X_{\g,n}^m(t)\|_\H^2\big].
		\end{align}By Lemmas \ref{lem4.4}-\ref{lem4.7} and comparing \eqref{4.72} and \eqref{4.73}, we obtain \eqref{4.71}. Moreover, the uniform energy estimates obtained in \eqref{4.8} remains valid for $\X_{\g}^m(\cdot)$, for each $m\in\N$.
	\end{proof}
	
	\begin{theorem}\label{thrm2.9}
		Under the assumptions of Theorem \ref{thrm4.1}, the pathwise uniqueness holds for the solutions to the problem \eqref{4.4}.
	\end{theorem}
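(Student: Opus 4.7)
My plan is to prove pathwise uniqueness via an It\^o formula calculation on the difference of two solutions, coupled with an exponential Gronwall trick tailored to the fully local monotone structure. Let $\X_1$ and $\X_2$ be two solutions of \eqref{4.4} on the same stochastic basis with the same initial data $\boldsymbol{y}_0$ and the same Wiener process $\W(\cdot)$, and set $\Z:=\X_1-\X_2$. Applying \cite[Theorem 4.2.5]{WLMR2} to $\|\Z(t)\|_{\H}^2$ and grouping the drift contributions yields
\begin{align*}
	\|\Z(t)\|_\H^2 &= \int_0^t 2\langle \A(s,\X_1)-\A(s,\X_2),\Z\rangle\d s+\int_0^t\|\s_1(\X_1)-\s_1(\X_2)\|_{\L_2}^2\d s\\
	&\quad+\int_0^t 2\big((\s_2(\X_1)-\s_2(\X_2))\dot{\W}^m(s),\Z\big)\d s+\int_0^t 2\big((\s_3(\X_1)-\s_3(\X_2))\g(s),\Z\big)\d s\\
	&\quad-\int_0^t 2\big(\G(\X_1)-\G(\X_2),\Z\big)\d s+M(t),
\end{align*}
where $M(\cdot)$ is a local martingale arising from $\s_1(\X_1)-\s_1(\X_2)$.

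Next, I would bound each deterministic integrand by $\psi(s)\|\Z(s)\|_\H^2$ using Hypothesis \ref{hyp1}: the $\A$-pairing is controlled by (H.2); the Itô correction for $\s_1$ by \eqref{1.10}; the $\s_2\dot{\W}^m$ and $\s_3\g$ pairings by Cauchy--Schwarz, \eqref{1.10} and Young's inequality; and the $\G$-pairing by (H.7). Combining these, the weight
\[
	\psi(s):=f(s)+\rho(\X_1(s))+\eta(\X_2(s))+\kappa(\X_1(s))+\varkappa(\X_2(s))+\|\dot{\W}^m(s)\|_\U^2+\|\g(s)\|_\U^2
\]
(with a universal multiplicative constant absorbed) dominates the deterministic part. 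Introducing the integrating factor $e^{-\int_0^t\psi(s)\d s}$ and a localising sequence $\tau_R:=T\wedge\inf\{t:\int_0^t\psi(s)\d s>R\}$, applying It\^o's product rule and taking expectation removes the martingale term and yields
\[
	\E\Big[e^{-\int_0^{t\wedge\tau_R}\psi(s)\d s}\|\Z(t\wedge\tau_R)\|_\H^2\Big]\leq 0,
\]
so $\Z(\cdot\wedge\tau_R)\equiv 0$ $\P$-a.s.

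To finish, I must verify $\int_0^T\psi(s)\d s<\infty$ $\P$-a.s., which forces $\tau_R\to T$ $\P$-a.s.\ as $R\to\infty$. By hypothesis $f\in\L^1(0,T;\R^+)$ and $\g\in\L^2(0,T;\U)$; the energy estimate \eqref{4.5} gives $\X_i\in\L^\infty(0,T;\H)\cap\L^\beta(0,T;\V)$ $\P$-a.s., and the growth bounds \eqref{1.11}--\eqref{1.12} on $\rho,\eta,\kappa,\varkappa$ then yield pathwise integrability of these terms; finally $\dot{\W}^m$ is piecewise constant on the mesh of size $\vp=T/2^m$ with only $2^m$ pieces built from Gaussian increments, so $\int_0^T\|\dot{\W}^m(s)\|_\U^2\d s$ is $\P$-a.s.\ finite. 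Passing $R\to\infty$ yields $\Z\equiv 0$, hence pathwise uniqueness, and the Yamada--Watanabe theorem (cited after Theorem~\ref{thrm4.1}) upgrades Theorem~\ref{thrm2.8} to the probabilistically strong well-posedness claimed in Theorem~\ref{thrm4.1}.

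The delicate step is the treatment of the $\s_2$ and $\s_3$ perturbations: they are not genuine diffusions in \eqref{4.4} but deterministic products with $\dot{\W}^m$ and $\g$, and the naive estimate would lose a factor of $\|\dot{\W}^m\|_\U$ or $\|\g\|_\U$ that is not uniformly bounded. Splitting via Young's inequality is essential so that the quadratic factor $\|\Z\|_\H^2$ is cleanly separated and the residual $\|\dot{\W}^m\|_\U^2+\|\g\|_\U^2$ is absorbed into the exponential weight $\psi$; verifying pathwise integrability of $\psi$ on $[0,T]$ under Hypothesis \ref{hyp1} together with $p>\frac{\beta}{\beta-1}$ is the principal obstacle, and it is exactly the point where the moment exponent assumed in Theorem~\ref{thrm4.1} is used.
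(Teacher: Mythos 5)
Your proposal is correct and follows essentially the same route as the paper: the paper also applies It\^o's formula to $\Phi(\cdot)\|\X_{\g}^m(\cdot)-\Y_{\g}^m(\cdot)\|_{\H}^2$ with exactly your exponential weight $\Phi(t)=e^{-\int_0^t\psi(s)\d s}$ (including the $\|\dot{\W}^m(s)\|_{\U}^2+\|\g(s)\|_{\U}^2$ terms absorbed via Young's inequality), localizes with stopping times, takes expectations, and passes to the limit by Fatou's lemma using the $\P$-a.s.\ finiteness of $\int_0^T\psi(s)\d s$. The only cosmetic difference is that the paper allows two different initial data and derives the stability bound $\E\big[\Phi(t)\|\X_{\g}^m(t)-\Y_{\g}^m(t)\|_{\H}^2\big]\leq\E\big[\|\boldsymbol{y}_0^1-\boldsymbol{y}_0^2\|_{\H}^2\big]$ before specializing.
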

	\begin{proof}
		Let $\X_{\g}^m(\cdot)$ and $\Y_{\g}^m(\cdot)$ be the two solutions to the problem \eqref{4.4} defined on the probability space $(\Omega,\mathscr{F},\{\mathscr{F}_t\}_{t\geq0},\P)$, with the initial data $\X_{\g}^m(0)=\boldsymbol{y}_0^1$ and $\Y_{\g}^m(0)=\boldsymbol{y}_0^2$, respectively. Let us define 
		\begin{align*}
			\Psi(t)&:=\exp\bigg\{-\int_0^t\big(f(s)+\rho(\X_{\g}^m(s))+\eta(\Y_{\g}^m(s))+\kappa(\X_{\g}^m(s))+\varkappa(\Y_{\g}^m(s))\\&\qquad+\|\dot{\W}^m(s)\|_\U^2+\|\g(s)\|_\U^2\big) \d s\bigg\}.
		\end{align*}Applying It\^o's formula to the process $\Psi(\cdot)\|\X_{\g}^m(\cdot)-\Y_{\g}^m(\cdot)\|_\H^2$, we find
		\begin{align}\label{4.75}\nonumber
			&\Psi(t)\|\X_{\g}^m(t)-\Y_{\g}^m(t)\|_\H^2\\&\nonumber=\|\boldsymbol{y}_0^1-\boldsymbol{y}_0^2\|_\H^2+ \int_0^t\Psi(s)\bigg\{2\langle \A(s,\X_{\g}^m(s))-\A(s,\Y_{\g}^m(s)),\X_{\g}^m(s)-\Y_{\g}^m(s)\rangle\\&\nonumber\qquad+\|\s_1(\X_{\g}^m(s))-\s_1(\Y_{\g}^m(s))\|_{\L_2}^2+\big((\s_2(\X_{\g}^m(s))-\s_2(\Y_{\g}^m(s)))\dot{\W}^m(s),\X_{\g}^m(s)-\Y_{\g}^m(s)\big)\\&\nonumber\qquad+ \big((\s_3(\X_{\g}^m(s))-\s_2(\Y_{\g}^m(s)))\g(s),\X_{\g}^m(s)-\Y_{\g}^m(s)\big) \\&\nonumber\qquad-\big(\G(\X_{\g}^m(s))-\G(\Y_{\g}^m(s)),\X_{\g}^m(s)-\Y_{\g}^m(s)\big)\\&\nonumber\qquad- \big(f(s)+\rho(\X_{\g}^m(s))+\eta(\Y_{\g}^m(s))+\kappa(\X_{\g}^m(s))+\varkappa(\Y_{\g}^m(s))+\|\dot{\W}^m(s)\|_\U^2+\|\g(s)\|_\U^2\big)\\&\nonumber\quad\qquad\times\|\X_{\g}^m(s)-\Y_{\g}^m(s)\|_\H^2\bigg\}\d s\\&\nonumber\quad + 2\int_0^t\Psi(s)\big((\s_1(\X_{\g}^m(s))-\s_1(\Y_{\g}^m(s)))\d \W(s),\X_{\g}^m(s)-\Y_{\g}^m(s)\big)\\&\leq \|\boldsymbol{y}_0^1-\boldsymbol{y}_0^2\|_\H^2+2\int_0^t\Psi(s)\big((\s_1(\X_{\g}^m(s))-\s_1(\Y_{\g}^m(s)))\d \W(s),\X_{\g}^m(s)-\Y_{\g}^m(s)\big),
		\end{align}where we have used Hypotheses \ref{hyp1} (H.2), (H.5) and \ref{hyp2} (H.7). Let $\{\tau_k\}\uparrow\infty$ be a sequence of stopping times in such a way that the local martingale term appearing in the above inequality is a martingale. Taking expectations on both side if the above inequality, we find 
		\begin{align}\label{4.76}
			\E\big[\Psi(t\wedge\tau_k)\|\X_{\g}^m(t\wedge\tau_k)-\Y_{\g}^m(t\wedge\tau_k)\|_\H^2\big]\leq \E\big[\|\boldsymbol{y}_0^1-\boldsymbol{y}_0^2\|_\H^2\big].
		\end{align}Letting $k\to\infty$ and using Fatou's lemma, we obtain
		\begin{align}\label{4.77}
			\E\big[\Psi(t)\|\X_{\g}^m(t)-\Y_{\g}^m(t)\|_\H^2\big]\leq \E\big[\|\boldsymbol{y}_0^1-\boldsymbol{y}_0^2\|_\H^2\big],
		\end{align}where we have used the fact that 
	\begin{align*}
			\int_0^T\big(f(s)+\rho(\X_{\g}^m(s))+\eta(\Y_{\g}^m(s))&+\kappa(\X_{\g}^m(s))+\varkappa(\Y_{\g}^m(s))\\&+\|\dot{\W}^m(s)\|_\U^2+\|\g(s)\|_\U^2\big) \d s<\infty, \ \P\text{-a.s.}
		\end{align*}The inequality \eqref{4.77} gives the pathwise uniqueness of the solutions to the problem \eqref{4.4}.
	\end{proof}
	\begin{proof}[Proof of Theorem \ref{thrm4.1}]
		In Theorem \ref{thrm2.8}, we proved the existence of a probabilistically weak solution to the problem \eqref{4.4} which satisfies the uniform energy estimate \eqref{4.8}. Moreover, under Hypotheses \ref{hyp1} and \ref{hyp2}, the pathwise uniqueness of probabilistically weak solution to the problem \eqref{4.4} is established in Theorem \ref{thrm2.9}.  Therefore, in view of Theorems \ref{thrm2.8}-\ref{thrm2.9}, an application of the classical Yamada-Watanabe theorem (\cite[Theorem 2.1]{MRBSXZ}) yields  the existence of a unique probabilistically strong solution to the problem \eqref{4.4}.
	\end{proof}
	\begin{remark}
		In this remark, we discuss an alternative way to establish the existence and uniqueness results for the system \eqref{4.4}.  The main ingredient of this proof is to use the Girsanov theorem (see \cite[Theorem 10.14]{DaZ}). A similar result using the Girsanov theorem has been established in \cite[Subsection 2.5]{ICAM2} for stochastic two-dimensional hydrodynamical systems.  For any constant $\gamma,$ and $\s_2=\gamma\s_1$,	let us recall the system \eqref{4.4} with the modification: 
		\begin{equation}\label{6.1}
			\left\{
			\begin{aligned}
				\d \X_{\g}^m(t)&=\A(t,\X_{\g}^m(t))\d t+\s_1(\X_{\g}^m(t))\d \W(t)+\gamma\s_1(\X_{\g}^m(t))\dot{\W}^m(t)\d t\\&\quad +\s_3(\X_{\g}^m(t))\g(t)\d t-\G(\X_{\g}^m(t))\d t, \ \ t\in(0,T),\\ 
				\X_{\g}^m(0)&=\boldsymbol{y}_0,
			\end{aligned}
			\right.
		\end{equation}where the mappings are defined in Section \ref{Sec2}.
		For any $m\in\N$, and $t\in[0,T]$, let us set 
		\begin{align*}
			\mathrm{Y}^m(t):=\exp\bigg\{\gamma\int_0^t\dot{\W}^m(s)\d\W(s)-\frac{\gamma^2}{2}\int_0^t\|\dot{\W}^m(s)\|_\U^2\d s\bigg\}
		\end{align*}and 
		\begin{align}\label{6.2}
			\wi{\W}(t):=\W(t)+\gamma\int_0^t\dot{\W}^m(s)\d s,
		\end{align}where $\dot{\W}^m(\cdot)$ is defined in \eqref{1.3}. Using Girsanov's theorem, the process $\wi{\W}(\cdot)$ defined in \eqref{6.2} is a cylindrical Wiener process under the probability measure $\wi{\P}$ and the measure is absolutely continuous with respect to the measure $\P$, that is $$\frac{\d {\wi{\P}}}{\d \P}\bigg|_{\mathscr{F}_t}=\mathrm{Y}^m(t),$$
		for $t\in[0,T]$ (for more details see Section \ref{Sec4} below). Under the above change of measure, the system \eqref{6.1} reduces to the following system:
		\begin{equation}\label{6.3}
			\left\{
			\begin{aligned}
				\d \X_{\g}^m(t)&=\A(t,\X_{\g}^m(t))\d t+\s_1(\X_{\g}^m(t))\d \wi{\W}(t)+\s_3(\X_{\g}^m(t))\g(t)\d t\\&\quad -\G(\X_{\g}^m(t))\d t, \ \ t\in(0,T),\\ 
				\X_{\g}^m(0)&=\boldsymbol{y}_0.
			\end{aligned}
			\right.
		\end{equation}
		By taking $\mathcal{A}(\cdot,\X)=\A(\cdot,\X)+\sigma_3(\X)\g-\G(\X)$, one can see that the system \eqref{6.3} is similar to the one given in \eqref{1.1},  whose existence of unique pathwise strong solution is known from Theorem \ref{thrm1}. Therefore, for any $p\geq 2$ and $\beta\in(1,\infty)$, the following energy estimate holds:
	\begin{align}\label{383}
		\wi{\E}\bigg[\sup_{t\in[0,T]}\|\X_{\g}^m(t)\|_\H^p\bigg]+\wi{\E}\bigg[\int_0^T\|\X_{\g}^m(t)\|_\V^\beta\d t\bigg]^{\frac{p}{2}}\leq C<\infty,
	\end{align}
where the constant $C$ is independent of $m$. 	Therefore, $\X_{\g}^m(\cdot)$ is the unique solution of the system \eqref{6.1} and  $\X_{\g}^m(\cdot)$  satisfies the energy estimate \eqref{383} with the expected values under the given probability measure $\mathbb{P}$, but the constant $C$ in the right hand depends on $m$ (see \eqref{4.8}).
			\end{remark}

\section{Wong-Zakai approximation for the system \eqref{1.1}}\label{Sec3}\setcounter{equation}{0}
In this section, we state and prove our main result of this article. Before proceeding further, let us recall the following results from \cite{ICAM2}.
\begin{lemma}[{\cite[Lemma 2.1]{ICAM2}}]\label{lem3.1}
	For $T>0$, there exists a constant $\delta_0>0$ such that for every $\delta>\frac{\delta_0}{\sqrt{T}},\ t\in[0,T]$,
	\begin{align}\label{3.1}
		\lim_{m\to\infty} \P\bigg\{\sup_{1\leq i\leq m}\sup_{s\in[0,t]}|\dot{\beta}_i(s)|>\delta m^{\frac{1}{2}}2^{\frac{m}{2}}\bigg\}&=0,\\\label{3.2}
			\lim_{m\to\infty} \P\bigg\{\sup_{s\in[0,t]}\|\dot{\W}^m(s)\|_\U>\delta m2^{\frac{m}{2}}\bigg\}&=0.
	\end{align}
\end{lemma}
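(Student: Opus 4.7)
The plan is to exploit the piecewise constant structure of the approximating noises, reduce the suprema over $[0,t]$ to maxima over finitely many iid Gaussian or chi-squared random variables, and then combine standard tail bounds with a union bound.

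Concretely, on each subinterval $I_k=[k\vp,(k+1)\vp)$ (with $\vp=T/2^m$) the process $\dot{\beta}_i^m$ is constant, equal to $\vp^{-1}(\beta_i(k\vp)-\beta_i((k-1)\vp))$, a centered Gaussian of variance $\vp^{-1}$. Setting $Z_{i,k}:=\vp^{1/2}\dot{\beta}_i^m\big|_{I_k}$ one obtains a family of iid $\mathrm{N}(0,1)$ variables of cardinality at most $m(2^m+1)$ on $[0,t]$. After standardization the event in \eqref{3.1} becomes $\{\max_{i,k}|Z_{i,k}|>\delta m^{1/2}\sqrt{T}\}$, and the Gaussian tail $\P(|Z|>a)\leq 2 e^{-a^2/2}$ combined with a union bound yields
\[\P\Bigl(\sup_{1\leq i\leq m}\sup_{s\in[0,t]}|\dot{\beta}_i^m(s)|>\delta m^{1/2}2^{m/2}\Bigr)\leq 2m(2^m+1)\exp\Bigl(-\tfrac{\delta^2 mT}{2}\Bigr).\]
The right-hand side tends to zero precisely when $\delta^2 T>2\log 2$, which dictates the choice $\delta_0=\sqrt{2\log 2}$ (or any strictly larger constant) and establishes \eqref{3.1} for every $\delta>\delta_0/\sqrt{T}$.

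For \eqref{3.2} the same reduction applies to the squared norm: $\|\dot{\W}^m(s)\|_\U^2=\sum_{i=1}^m|\dot{\beta}_i^m(s)|^2$ is constant on each $I_k$ and equals $\vp^{-1}\chi_m^2(k)$, where $\chi_m^2(k):=\sum_{i=1}^m Z_{i,k}^2$ are iid chi-squared with $m$ degrees of freedom. Squaring the target, the event becomes $\{\max_k \chi_m^2(k)>\delta^2 m^2 T\}$. A Chernoff bound, for instance with $\lambda=1/4$ in $\E[e^{\lambda \chi_m^2}]=(1-2\lambda)^{-m/2}$, gives $\P(\chi_m^2>y)\leq 2^{m/2} e^{-y/4}$, so a union bound over the at most $2^m+1$ pieces produces an estimate of the form $C\, 2^{3m/2}\exp(-\delta^2 m^2 T/4)$. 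This decays to zero as $m\to\infty$ for any $\delta>0$, because the exponent is quadratic in $m$; the constant $\delta_0$ dictated by \eqref{3.1} therefore suffices for \eqref{3.2} as well.

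The argument is largely bookkeeping, so I do not anticipate a deep obstacle; the main point that must be tracked is the matching between the growth of the threshold ($m^{1/2}2^{m/2}$ versus $m\,2^{m/2}$) and the count of random variables ($\sim m 2^m$ iid standard normals versus $\sim 2^m$ chi-squared in dimension $m$) entering the union bound. It is precisely this matching in \eqref{3.1} that pins down the threshold $\delta>\delta_0/\sqrt{T}$, while the quadratic-in-$m$ decay coming from the chi-squared concentration makes \eqref{3.2} comparatively effortless.
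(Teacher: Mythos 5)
Your proof is correct: the reduction to finitely many i.i.d.\ standard Gaussians (resp.\ chi-squared variables) via the piecewise-constant structure, the tail bounds, and the union bound all check out, and the count $\delta_0=\sqrt{2\log 2}$ correctly matches the threshold $\delta>\delta_0/\sqrt{T}$ in \eqref{3.1}. The paper does not prove this lemma itself but quotes it from \cite[Lemma 2.1]{ICAM2}, whose argument is exactly this standard Gaussian-tail-plus-union-bound computation, so your approach is essentially the same.
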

	\begin{lemma}[{\cite[Lemma 4.1]{ICAM2}}]\label{lemAM}
		Let $\vartheta(t)=\vartheta(\omega,t)$ be a random, a.s. continuous, non-decreasing process on $[0,T]$. Let $\tau_{\lambda}=T\wedge \inf\{t\geq 0: \vartheta(t)\geq \lambda\}$. Then, $\P\{\vartheta(T)\geq \lambda\}=\P\{\vartheta(\tau_\lambda) \geq \lambda\}$. Let $\tau_*$ be a stopping time such that $\tau_*\in[0,T]$ and $\P\{\tau_*<T\}\leq \e$. Then 
		\begin{align*}
			\P\{\vartheta(T)\geq \lambda\}\leq \P\{\vartheta(\tau_\lambda\wedge \tau_*)\geq \lambda\}+\e.
		\end{align*}
\end{lemma}
\begin{remark}\label{Remark4.2}
	If we compare \eqref{1.1} and \eqref{1.6}, the most significant difference is  that the term \break$	\displaystyle\int_0^{\cdot}\s(\Y^m(s))\dot{\W}^m(s)\d s$ cannot be expressed as stochastic integral directly. Indeed, we have the following identity (see \cite[Remark 2.7]{TMRZ} for more details):
	\begin{align}\label{3.4}\nonumber
		&\int_0^t\s\bigg(\Y^m\big((\lfloor\frac{s}{\vp}\rfloor-1)\vp\big)\bigg)\dot{\W}^m(s)\d s\\&=\int_0^t\bigg(\frac{1}{\vp}\int_{\lceil\frac{s}{\vp}\rceil\vp}^{\big(\lceil\frac{s}{\vp}\rceil+1\big)\vp}\chi_{\{l\leq t\}}\d l\bigg)\s\bigg(\Y^m\big(\lfloor \frac{s}{\vp}\rfloor\vp\big)\bigg)\circ\Pi_m\d\W(s),
	\end{align}to compare with the corresponding diffusion term $\displaystyle\int_0^t\s(\Y(s))\d\W(s)$.
\end{remark}

Let us now prove our main result, that is, the Wong-Zakai approximation result.  We follow the arguments similar to \cite[Thorem 3.1 and Section 4]{ICAM2} and \cite[Theorem 2.6]{TMRZ} to obtain our main result.

 \begin{theorem}\label{thrm3.3}
	Assume that Hypotheses \ref{hyp1} and \ref{hyp2} hold. Let the  initial data $\boldsymbol{y}_0$  be in the space $\L^p(\Omega;\H)$ for $p>\max\big\{\frac{\beta}{\beta-1},2\big\}$. Let $\Y(\cdot)$ and $\Y^m(\cdot)$ be the solutions to the system \eqref{1.1} and \eqref{1.6}, with the same initial data $\boldsymbol{y}_0$, respectively. Then
	for any $\lambda\in(0,1]$, we have 
	\begin{align}\label{45}
		\lim_{m\to\infty} \P\bigg\{\sup_{t\in[0,T]}\|\Y(t)-\Y^m(t)\|_\H^2\geq \lambda\bigg\}=0.
	\end{align}
\end{theorem}

\begin{proof}
		In order to verify the convergence \eqref{45},  by an application of Markov's inequality,  it is sufficient to show  that 	\begin{align}\label{3.5}
			\lim_{m\to\infty}\E\bigg[\sup_{t\in[0,T]}\|\Y(t)-\Y^m(t)\|_\H^2\bigg]=0.
		\end{align}
For $M\geq 0,\ m\in\N, \ \delta>\frac{\delta_0}{\sqrt{T}}$, and $\lambda\in(0,1]$, we define the following stopping times:
	\begin{align*}
		\tau_M^1&=T\wedge\inf\bigg\{t\geq0: \|\Y(t)\|_\H^2+\int_0^t\|\Y(s)\|_\V^\beta\d s>M\bigg\},\\
		\tau_{m}^2&=T\wedge\inf\bigg\{t\geq0: \|\Y(t)-\Y^m(t)\|_\H^2+\int_0^t\|\Y(s)-\Y^m(s)\|_\V^\beta\d s\geq \lambda\bigg\} 
		,\\
		\tau_{m}^3&=T\wedge\inf\bigg\{t\geq0:\bigg[\sup_{1\leq i\leq m}\sup_{s\in[0,t]}|\dot{\beta}_i(s)|\bigg]\vee\bigg[m^{-\frac{1}{2}}\sup_{s\in[0,t]}\|\dot{\W}^m(s)\|_\U\bigg]>\delta m^{\frac{1}{2}}2^{\frac{m}{2}}\bigg\}.
	\end{align*} In the sequel, the constant $M$ will be chosen to make sure that, except on small sets, $\tau_M^1$ is equal to $T$. Once this is achieved, only the dependency of $m$ will be relevant. Thus, once $M$ has been chosen in terms of the limit process $\Y$, we let 
\begin{align}\label{3.3}
	\tau_m=\tau_M^1\wedge \tau_{m}^2\wedge \tau_m^3.
\end{align}From the definition of $\tau_M^1$ and $\tau_m^2$, we have
\begin{align}
	\sup_{s\in[0,\tau_m]} \big\{\|\Y(s)\|_\H^2\vee \|\Y^m(s)\|_\H^2\big\}+\int_0^{\tau_m}\big\{\|\Y(s)\|_\V^\beta\vee \|\Y^m(s)\|_\V^\beta\big\}\d s \leq 2(M+1),
\end{align}and from $\tau_m^3$, we obtain 
\begin{align}
	 \bigg[\sup_{s\in[0,\tau_m]}\sup_{1\leq i\leq m}|\dot{\beta}_i(s)|\bigg] \vee \bigg[m^{-\frac{1}{2}}\sup_{s\in[0,\tau_m]}\|\dot{\W}^m(s)\|_\U\bigg] \leq \delta m^\frac{1}{2}2^{\frac{m}{2}}.
\end{align}
Let us now apply Lemma \ref{lemAM} with $\tau_*=\tau_M^1\wedge \tau_m^3$ and 
\begin{align*}
	\vartheta(t)=\sup_{s\in[0,t]}\|\Y(s)-\Y^m(s)\|_\H^2.
\end{align*}
Since $\Y\in\C([0,T];\H)$, $\P$-a.s. and $\int_0^T\|\Y(s)\|_\V^\beta\d s<\infty$, $\P$-a.s., the map $\vartheta$ is a.s. continuous and 
\begin{align*}
	\{\tau_*<T\}\subset \{\tau_M^1<T\}\cup \{\tau_m^3<T\}\subset \bigg\{\sup_{s\in[0,\tau_M^1]}\|\Y(s)\|_\H^2+\int_0^{\tau_M^1}\|\Y(s)\|_\V^\beta\d s> M\bigg\}\cup \Omega_m^c(T), 
\end{align*}where 
\begin{align*}
	\Omega_m(t)=\bigg\{\sup_{1\leq i\leq m}\sup_{s\in[0,t]}|\dot{\beta}_i(s)|\leq \delta m^\frac{1}{2}2^{\frac{m}{2}}\bigg\}\cap \bigg\{\sup_{s\in[0,t]}\|\dot{\W}^m(s)\|_\U\leq \delta m2^{\frac{m}{2}}\bigg\}.
\end{align*}
By the energy estimate \eqref{1.16} and an application of Markov's inequality yield 
\begin{align*}
	\P\{\tau_*<T\}\leq CM^{-1}+\P\{\Omega_m^c(T)\}.  
\end{align*}Therefore, for any given $\e>0$, we can choose $M$ large enough such that $CM^{-1}<\frac{\e}{2}$. Using Lemma \ref{lem3.1}, we conclude that there exists $m_0\geq 1$ such that for all integers $m\geq m_0$, $\P\{\Omega_m^c(T)\}<\frac{\e}{2}$. Lemma \ref{lemAM} shows that in order to prove \eqref{45}, we only need to show the following: Fix $M$, for every $\lambda>0$, 
\begin{align}\label{PC2}
	\lim_{m\to\infty}\P\bigg\{\sup_{t\in [0,\tau_m]}\|\Y(t)-\Y^m(t)\|_\H^2\geq \lambda\bigg\}=0.
\end{align}
To check the convergence \eqref{PC2},  by an application of Markov's inequality, it is sufficient to show  the following:
\begin{align}\label{3.6}
	\lim_{m\to\infty}\E\bigg[\sup_{t\in[0,\tau_{m}]}\|\Y(t)-\Y^m(t)\|_\H^2\bigg]=0,
\end{align}and the Wong-Zakai approximation result can be completed.   In fact, by similar arguments,  we only need to prove \eqref{3.6} to obtain \eqref{3.5}. The proof is divided into the following steps:

\vspace{2mm}
\noindent
\textbf{Step 1.} According to the definition of stopping times $\tau_m$, for some fixed constant $\delta>0$ with $\delta>\frac{\delta_0}{\sqrt{T}}$, we can find a positive constant $C_M$ such that for all $t\in[0,\tau_m]$ and $i=1,\ldots,m$,
\begin{equation}\label{3.8}
	\left\{
	\begin{aligned}
		\|\Y(t)\|_\H^2+\|\Y^m(t)\|_\H^2 &\leq C_M,\\
		\int_0^t\big\{\|\Y(s)\|_\V^\beta+\|\Y^m(s)\|_\V^\beta\big\}\d s&\leq C_M,\\
		|\dot{\beta}_i^m(t)|+m^{-\frac{1}{2}}\|\dot{\W}^m(t)\|_\U&\leq 2\delta m^{\frac{1}{2}}2^\frac{m}{2}.
	\end{aligned}
	\right.
\end{equation}The  above properties will be used repeatedly throughout the sequel. By the equality \eqref{3.4}, we have the following decomposition
\begin{align*}
	\Y^m(t)-\Y(t) &=\int_0^t\big( \A(s,\Y^m(s))-\A(s,\Y(s))\big)\d s\\&\quad+ \int_0^t\bigg\{\bigg(\frac{1}{\vp}\int_{\lceil \frac{s}{\vp}\rceil\vp }^{(\lceil \frac{s}{\vp}\rceil+1)\vp }\chi_{\{l\leq t\}}\d l\bigg)\s(\Y^m(\lfloor{\frac{s}{\vp}}\rfloor\vp))\Pi_m-\s(\Y(s))\bigg\}\d \W(s)\\&\quad +\int_0^t \bigg\{\big[\s(\Y^m(s))-\s(\Y^m((\lfloor\frac{s}{\vp}\rfloor-1)\vp))\big]\dot{\W}^m(s)-\frac{1}{2}\wi{\Tr}_m(\Y^m(s)) \bigg\}\d s.
\end{align*}Applying It\^o's formula (cf. \cite[Theorem 1]{IGNV}) to the process $\|\Y^m(\cdot)-\Y(\cdot)\|_\H^2$, we find 
\begin{align}\label{3.9}\nonumber
&	\|	\Y^m(t)-\Y(t) \|_\H^2\\&\nonumber=\int_0^t2\langle  \A(s,\Y^m(s))-\A(s,\Y(s)),\Y^m(s)-\Y(s)\rangle  \d s\\&\nonumber\quad+ \int_0^t\bigg\|\bigg(\frac{1}{\vp}\int_{\lceil \frac{s}{\vp}\rceil\vp }^{(\lceil \frac{s}{\vp}\rceil+1)\vp }\chi_{\{l\leq t\}}\d l\bigg)\s(\Y^m(\lfloor{\frac{s}{\vp}}\rfloor\vp))\Pi_m-\s(\Y(s))\bigg\|_{\L_2}^2\d s\\&\nonumber\quad +2
	\int_0^t\bigg(\bigg\{\bigg(\frac{1}{\vp}\int_{\lceil \frac{s}{\vp}\rceil\vp }^{(\lceil \frac{s}{\vp}\rceil+1)\vp }\chi_{\{l\leq t\}}\d l\bigg)\s(\Y^m(\lfloor{\frac{s}{\vp}}\rfloor\vp))\Pi_m-\s(\Y(s))\bigg\}\d \W(s),\Y^m(s)-\Y(s)\bigg)
	\\&\nonumber\quad +2\int_0^t \bigg(\bigg[\s(\Y^m(s))-\s(\Y^m((\lfloor\frac{s}{\vp}\rfloor-1)\vp))\bigg]\dot{\W}^m(s)-\frac{1}{2}\wi{\Tr}_m(\Y^m(s)),\Y^m(s)-\Y(s) \bigg)\d s\\&=:\sum_{j=1}^4I_j(m,t).
\end{align}
\vspace{2mm}
\noindent
\textbf{Step 2.} \textsf{Claim:} There exists a positive constant $C_{M,T,\|f\|_{\L^1}}$ such that 
\begin{align}\label{3.10}
	\E\bigg[\int_0^{\tau_m}\|\Y(l)-\Y(\lfloor\frac{l}{\vp}\rfloor\vp)\|_\H^2\d l\bigg]&\leq C_{M,T,\|f\|_{\L^1}}2^{-\frac{3m}{4}},\\ 
	\label{3.11}
	\E\bigg[\int_0^{\tau_m}\|\Y^m(l)-\Y^m(\lfloor\frac{l}{\vp}\rfloor\vp)\|_\H^2\d l\bigg]&\leq C_{M,T,\|f\|_{\L^1}}2^{-\frac{3m}{4}},\\
		\label{3.12}
	\E\bigg[\int_0^{\tau_m}\|\Y(l)-\Y((\lfloor\frac{l}{\vp}\rfloor-1)\vp)\|_\H^2\d l\bigg]&\leq C_{M,T,\|f\|_{\L^1}}2^{-\frac{3m}{4}},\\
		\label{3.13}
	\E\bigg[\int_0^{\tau_m}\|\Y^m(l)-\Y^m((\lfloor\frac{l}{\vp}\rfloor-1)\vp)\|_\H^2\d l\bigg]&\leq C_{M,T,\|f\|_{\L^1}}2^{-\frac{3m}{4}},\\
	\label{3.14}
	\E\bigg[\int_0^{\tau_m}\|\Y(l)-\Y(\lceil\frac{l}{\vp}\rceil\vp)\|_\H^2\d l\bigg]&\leq C_{M,T,\|f\|_{\L^1}}2^{-\frac{3m}{4}},\\ 
	\label{3.15}
	\E\bigg[\int_0^{\tau_m}\|\Y^m(l)-\Y^m(\lceil\frac{l}{\vp}\rceil\vp)\|_\H^2\d l\bigg]&\leq C_{M,T,\|f\|_{\L^1}}2^{-\frac{3m}{4}}.
\end{align}Note carefully that the inequalities \eqref{3.10}-\eqref{3.11} and \eqref{3.14}-\eqref{3.15} are not the same.
The above inequalities will be used to estimate the terms in \eqref{3.9}. 

\noindent
\textsf{Proof of \eqref{3.10}:} Applying It\^o's formula (cf. \cite[Theorem 1]{IGNV}) to the process $\|\Y(\cdot)-\Y(\lfloor\frac{s}{\vp}\rfloor\vp)\|_\H^2$, integrating with respect to $s$ and taking expectation, we get for $t\in(0,\tau_m]$,
\begin{align}\label{3.16}\nonumber
	&\E\bigg[\int_0^{t}\|\Y(s)-\Y(\lfloor\frac{s}{\vp}\rfloor\vp)\|_\H^2\d s\bigg]\\&\nonumber=2\E\bigg[\int_0^t\int_{\lfloor\frac{s}{\vp}\rfloor\vp}^{s}\langle \A(l,\Y(l)),\Y(l)-\Y(\lfloor\frac{s}{\vp}\rfloor\vp)\rangle \d l\d s\bigg]\\&\nonumber\quad +\E\bigg[\int_0^t\int_{\lfloor\frac{s}{\vp}\rfloor\vp}^{s}\|\s(\Y(l))\|_{\L_2}^2\d l\d s\bigg]+2\E\bigg[\int_0^t\int_{\lfloor\frac{s}{\vp}\rfloor\vp}^{s}\big(\s(\Y(l))\d\W(l),\Y(l)-\Y(\lfloor \frac{s}{\vp}\rfloor\vp)\big)\d s\bigg]\\& =: J_1(m)+J_2(m)+J_3(m).
\end{align}

\vspace{2mm}
\noindent
\textsl{Estimate for $J_1(m)$:} Using H\"older's inequality, Hypothesis \ref{hyp1} (H.4), Fubini's theorem and \eqref{3.8}, we get
\begin{align}\label{3.17}\nonumber
	&J_1(m)\\&\nonumber\leq 2\bigg\{\E\bigg[\int_0^{\tau_m}\int_{\lfloor\frac{s}{\vp}\rfloor\vp}^{s}\|\Y(l)-\Y(\lfloor\frac{s}{\vp}\rfloor\vp)\|_\V^\beta\d l\d s\bigg]\bigg\}^{\frac{1}{\beta}}	\\&\nonumber\qquad\times\bigg\{\E\bigg[\int_0^{\tau_m}\int_{\lfloor\frac{s}{\vp}\rfloor\vp}^{s}\|\A(l,\Y(l))\|_{\V^*}^\frac{\beta}{1-\beta}\d l\d s\bigg]\bigg\}^\frac{\beta-1}{\beta}
	\\&\nonumber\leq 2\bigg\{\E\bigg[\int_0^{\tau_m}\int_{\lfloor\frac{s}{\vp}\rfloor\vp}^{s}\|\Y(l)-\Y(\lfloor\frac{s}{\vp}\rfloor\vp)\|_\V^\beta\d l\d s\bigg]\bigg\}^{\frac{1}{\beta}}	\\&\nonumber\qquad\times\bigg\{\E\bigg[\int_0^{\tau_m}\int_{\lfloor\frac{s}{\vp}\rfloor\vp}^{s}\big(f(l)+C\|\Y(l)\|_\V^\beta\big)\d l\d s\bigg] \bigg\}^\frac{\beta-1}{\beta}
		\\&\nonumber\leq 2\bigg\{\vp\E\bigg[\int_0^{T}\|\Y(s)-\Y(\lfloor\frac{s}{\vp}\rfloor\vp)\|_\V^\beta\d s\bigg]\bigg\}^{\frac{1}{\beta}}\bigg\{\vp\E\bigg[\int_0^{T}\big(f(s)+C\|\Y(s)\|_\V^\beta\big)\d s\bigg]\bigg\}^\frac{\beta-1}{\beta} \\&\leq C_{M,T,\|f\|_{\L^1}}2^{-m}.
\end{align}In order to obtain the penultimate term in the above inequality, we have used the following calculations: 
\begin{align*}
&	\int_0^{\tau_m}\int_{\lfloor \frac{s}{\vp}\rfloor\vp}^s\|\Y(l)-\Y(\lfloor \frac{s}{\vp}\rfloor\vp)\|_\V^\beta\d l \d s \\& =\sum_{k=0}^{\lfloor \frac{\tau_m}{\vp}\rfloor}\int_{k\vp}^{(k+1)\vp\wedge \tau_m}\int_{\lfloor \frac{s}{\vp}\rfloor\vp}^s \|\Y(l)-\Y(\lfloor \frac{s}{\vp}\rfloor\vp)\|_\V^\beta\d l \d s\\&
=\sum_{k=0}^{\lfloor \frac{\tau_m}{\vp}\rfloor}\int_{k\vp}^{(k+1)\vp\wedge \tau_m}\int_{k\vp}^s \|\Y(l)-\Y(k\vp)\|_\V^\beta\d l \d s \\& \leq  
\sum_{k=0}^{\lfloor \frac{\tau_m}{\vp}\rfloor}\int_{k\vp}^{(k+1)\vp\wedge \tau_m}\int_{k\vp}^{(k+1)\vp\wedge \tau_m} \|\Y(l)-\Y(k\vp)\|_\V^\beta\d l \d s
 \\& \leq   \vp 
\sum_{k=0}^{\lfloor \frac{\tau_m}{\vp}\rfloor}\int_{k\vp}^{(k+1)\vp\wedge \tau_m} \|\Y(s)-\Y(\lfloor \frac{s}{\vp}\rfloor\vp)\|_\V^\beta\d s \\& 
=\vp \int_0^{\tau_m}\|\Y(s)-\Y(\lfloor \frac{s}{\vp}\rfloor\vp)\|_\V^\beta\d s.
\end{align*}
\vspace{2mm}
\noindent
\textsl{Estimate for $J_2(m)$:} Using Hypothesis \ref{hyp1} (H.5) and \eqref{3.8}, we obtain
\begin{align}\label{3.18}
	J_2(m)\leq C_{M,T}2^{-m}.
\end{align}

\vspace{2mm}
\noindent
\textsl{Estimate for $J_3(m)$:} We apply BDG inequality, Hypothesis \ref{hyp1} (H.5) and \eqref{3.8} to estimate the term $J_3(m)$ as
\begin{align}\label{3.19}\nonumber
	J_3(m)&\leq 2\E\bigg[\sup_{t\in[0,\tau_m]}\bigg|\int_0^t\int_{\lfloor\frac{s}{\vp}\rfloor\vp}^{s}\big(\s(\Y(l))\d\W(l),\Y(l)-\Y(\lfloor \frac{s}{\vp}\rfloor\vp)\big)\d s\bigg]\\&\leq C \E\bigg[\int_0^{\tau_m}\int_{\lfloor\frac{s}{\vp}\rfloor\vp}^{s}\|\s(\Y(l))\|_{\L_2}^2\|\Y(l)-\Y(\lfloor \frac{s}{\vp}\rfloor\vp)\|_\H^2\d l\d s\bigg]^\frac{1}{2} \\&\label{3.20} \leq C_{M,T}2^{-\frac{m}{2}}.
\end{align}Combining \eqref{3.16}-\eqref{3.20}, we deduce 
\begin{align}\label{3.21}
	\E\bigg[\int_0^{\tau_m}\|\Y(s)-\Y(\lfloor\frac{s}{\vp}\rfloor\vp)\|_\H^2\d s\bigg]&\leq C_{M,T,\|f\|_{\L^1}}2^{-\frac{m}{2}}.
\end{align}Again, using Fubini's theorem in \eqref{3.19}, Hypothesis \ref{hyp1} (H.5) and \eqref{3.20}, we find 
\begin{align}\label{3.22}\nonumber
	J_3(m)&\leq C\E\bigg[\vp\int_0^{\tau_m}\|\s(\Y(l))\|_{\L_2}^2\|\Y(l)-\Y(\lfloor\frac{l}{\vp}\rfloor\vp)\|_\H^2\d l\bigg]^\frac{1}{2}\\&\nonumber\leq 
	C\E\bigg[\vp\int_0^{\tau_m}\big(1+\|\Y(l)\|_\H^2\big)\|\Y(l)-\Y(\lfloor\frac{l}{\vp}\rfloor\vp)\|_\H^2\d l\bigg]^\frac{1}{2}\\&\leq C_{M,T}2^{-\frac{3m}{4}}.
\end{align}
Hence, from \eqref{3.16}-\eqref{3.19} and \eqref{3.22}, we obtain the required estimate \eqref{3.10}. Using similar arguments, one can prove that the inequalities \eqref{3.11}-\eqref{3.15} hold.

\vspace{2mm}
\noindent
\textbf{Step 3.} In this step, we estimate \eqref{3.9} term by term. 

\vspace{2mm}
\noindent
\textsl{Estimate for $I_1(m,t)$:} By Hypothesis \ref{hyp1} (H.2), we reach at 
\begin{align}\label{3.23}
|I_1(m,t)| \leq \int_0^t\big(f(s)+\rho(\Y^m(s))+\eta(\Y(s))\big)\|\Y^m(s)-\Y(s)\|_\H^2\d s.
\end{align}

\vspace{2mm}
\noindent
\textsl{Estimate for $I_2(m,t)$:} For $s\in[0,t\wedge\tau_m]$, by Hypothesis \ref{hyp1} (H.2), we have 
\begin{align}\label{3.24}\nonumber
	&|I_2(m,t)| \\&\nonumber \leq 2\int_0^{t\wedge\tau_m}\bigg\|\bigg(\frac{1}{\vp}\int_{\lceil \frac{s}{\vp}\rceil\vp }^{(\lceil \frac{s}{\vp}\rceil+1)\vp }\chi_{\{l> t\wedge\tau_m\}}\d l\bigg)\s\big(\Y^m(\lfloor{\frac{s}{\vp}}\rfloor\vp)\big)\bigg\|_{\L_2}^2\d s\\&\nonumber\quad + 4\int_0^{t\wedge\tau_m}\|\s\big(\Y^m(\lfloor{\frac{s}{\vp}}\rfloor\vp)\big)-\s(\Y^m(s))\|_{\L_2}^2\d s+8\int_0^{t\wedge\tau_m}\|\s(\Y^m(s))\Pi_m-\s(\Y^m(s))\|_{\L_2}^2\d s\\&\nonumber \quad + 8\int_0^{t\wedge\tau_m}\|\s(\Y^m(s))-\s(\Y(s))\|_{\L_2}^2\d s\\&\nonumber\leq  \underbrace{2\int_{t\wedge\tau_m-2\vp}^{t\wedge\tau_m}\|\s\big(\Y^m(\lfloor{\frac{s}{\vp}}\rfloor\vp)\big)\|_{\L_2}^2\d s}_{=:J_4(m)}+\underbrace{8\int_0^{\tau_m}\|\s(\Y^m(s))\Pi_m-\s(\Y^m(s))\|_{\L_2}^2\d s}_{=:J_5(m)}\\&\nonumber\quad +\underbrace{4 \int_0^{\tau_m}\bigg(\kappa(\Y^m(\lfloor{\frac{s}{\vp}}\rfloor\vp))+\varkappa(\Y^m(s))\bigg)\|\Y^m(\lfloor{\frac{s}{\vp}}\rfloor\vp)-\Y^m(s)\|_\H^2\d s}_{=:J_6(m)}\\&\quad + 8\int_0^{\tau_m}\big(\kappa(\Y^m(s))+\varkappa(\Y(s))\big)\|\Y^m(s)-\Y(s)\|_\H^2\d s.
\end{align}
Using the definition of $\vp$ (for $J_4(m)$), Hypothesis \ref{hyp2} (H.6) and \eqref{3.8} (for $J_5(m)$), and Hypothesis \ref{hyp1} (H.2), \eqref{3.8} and \eqref{3.11} (for $J_6(m)$), we find 
\begin{align}\label{3.25}
	\lim_{m\to\infty} J_7(m)=0, \ \  \text{ where } \ J_7(m)=\E\big[J_4(m)+J_5(m)+J_6(m)\big].
\end{align}From \eqref{3.24}, we have 
\begin{align}\label{3.26}
	\E\bigg[\sup_{t\in[0,\tau_m]}|I_2(m,t)|\bigg]\leq J_7(m)+8\E\bigg[\int_0^{\tau_m}\big(\kappa(\Y^m(s))+\varkappa(\Y(s))\big)\|\Y^m(s)-\Y(s)\|_\H^2\d s\bigg].
\end{align}

\vspace{2mm}
\noindent
\textsl{Estimate for $I_3(m,t)$:} Using BDG and Young's inequalities, we find
\begin{align}\label{3.27}\nonumber
	\E\bigg[\sup_{t\in[0,\tau_m]}|I_3(m,t)|\bigg]&\leq 4\E\bigg[\sup_{t\in[0,\tau_m]}\|\Y^m(t)-\Y(t)\|_\H\cdot\bigg\{\sup_{t\in[0,\tau_m]}|I_2(m,t)|\bigg\}^{\frac{1}{2}}\bigg]\\&\leq 
	\frac{1}{2}\E\bigg[\sup_{t\in[0,\tau_m]}\|\Y^m(t)-\Y(t)\|_\H^2\bigg]+8\E\bigg[\sup_{t\in[0,\tau_m]}|I_2(m,t)|\bigg].
\end{align}

\vspace{2mm}
\noindent
\textsl{Estimate for $I_4(m,t)$:} We claim that 
\begin{align}\label{3.28}
	J_8(m):=\E\bigg[\sup_{t\in[0,\tau_m]}|I_4(m,t)|\bigg]\to0,\ \ \text{ as } \ \ m\to\infty.
\end{align}
The proof of the claim \eqref{3.28} depends on finding an appropriate term from 
\begin{align*}
	\bigg[\s\big(\Y^m(s)\big)-\s\big(\Y^m\big((\lfloor\frac{s}{\vp}\rfloor-1)\vp\big)\big)\bigg]\dot{\W}^m(s),
	\end{align*}which can be compensated with the correction term $-\frac{1}{2}\wi{\Tr}_m(\Y^m(s))$. To obtain the appropriate term, by \eqref{1.3} and \eqref{1.5}, we equivalently write 
\begin{align}\label{3.29}
	\wi{\Tr}_m(\Y^m)=\sum_{i=1}^m\D\s_i(\Y^m)\s_i(\Y^m), \ \ \s(\Y^m)\dot{\W}^m=\sum_{i=1}^m\s_i(\Y^m)\dot{\beta}_i^m.
\end{align}
We know by Hypothesis \ref{hyp2} that $\s_i$ is twice Fr\'echet differentiable  for all $i\in\N$. Using second order Taylor's formula to $\s_i$ (see \cite[Theorem 7.9.1]{PGC}), we find 
\begin{align}\label{3.30}\nonumber
&	\s_i\big(\Y^m(s)\big)-\s_i\big(\Y^m\big((\lfloor\frac{s}{\vp}\rfloor-1)\vp\big)\big) \\&\nonumber=\D\s_i \big(\Y^m\big((\lfloor\frac{s}{\vp}\rfloor-1)\vp\big)\big)\bigg[\Y^m(s)-\Y^m\big((\lfloor\frac{s}{\vp}\rfloor-1)\vp\big) \bigg]\\&\nonumber\quad +\int_0^1(1-\theta)\D^2\s_i\big(\theta\Y^m(s)+(1-\theta)\Y^m\big((\lfloor\frac{s}{\vp}\rfloor-1)\vp\big)\big)\\&\qquad\times \bigg\{\Y^m(s)-\Y^m\big((\lfloor\frac{s}{\vp}\rfloor-1)\vp\big),\Y^m(s)-\Y^m\big((\lfloor\frac{s}{\vp}\rfloor-1)\vp\big)\bigg\}\d \theta,
\end{align}where $\D^2\s_i(\v)\{\v_1,\v_2\}$ represents the value of the second order Fr\'echet derivative $\D^2\s_i(\v)$ on the elements $\v_1$ and $\v_2$. In view of \eqref{1.6}, we are able to write 
\begin{align}\label{3.31}
	\Y^m(s)-\Y^m\big((\lfloor\frac{s}{\vp}\rfloor-1)\vp\big)=\int_{(\lfloor\frac{s}{\vp}\rfloor-1)\vp}^s \bigg(\A(l,\Y^m(l))+\s(\Y^m(l))\dot{\W}^m(l)-\frac{1}{2}\wi{\Tr}(\Y^m(l))\bigg)\d l.
\end{align}Using \eqref{1.3} and second equality of \eqref{3.29}, we arrive at 
\begin{align}\label{3.32}
&	\int_{(\lfloor\frac{s}{\vp}\rfloor-1)\vp}^s\s(\Y^m(l))\dot{\W}^m(l)\d l\nonumber\\&=\sum_{i=1}^m\bigg[\dot{\beta}_i^m\big((\lfloor\frac{s}{\vp}\rfloor-1)\vp\big)\int_{(\lfloor\frac{s}{\vp}\rfloor-1)\vp}^{\lfloor\frac{s}{\vp}\rfloor\vp} \s_j\big(\Y^m(l)\big)\d l+\dot{\beta}_i^m(s)\int_{\lfloor\frac{s}{\vp}\rfloor\vp}^s\s_i\big(\Y^m(l)\big)\d l\bigg].
\end{align}
Substituting the values from  \eqref{3.29}-\eqref{3.32} into $I_4(m,t)$, we find 
\begin{align}\label{3.33}
I_4(m,t)=: T_1(m,t)+T_2(m,t)+T_3(m,t)+T_4(m,t)+T_5(m,t)+T_6(m,t),
\end{align} where 
\begin{align*}
	T_1(m,t)&:= \sum_{i=1}^m\int_0^t\dot{\beta}_i^m(s)	\\&\nonumber\qquad\quad \times\bigg\langle \D\s_i\big(\Y^m\big((\lfloor\frac{s}{\vp}\rfloor-1)\vp\big)\big)\int_{(\lfloor\frac{s}{\vp}\rfloor-1)\vp}^s\A(l,\Y^m(l))\d l,\Y^m(s)-\Y(s)\bigg\rangle\d s, \\ 
	T_2(m,t)&:= \sum_{i=1}^m\sum_{j=1}^m\int_0^t\dot{\beta}_i^m(s)\dot{\beta}_j^m\big((\lfloor\frac{s}{\vp}\rfloor-1)\vp\big)
	\\&\qquad\quad \times\bigg(\D\s_i\big(\Y^m\big((\lfloor\frac{s}{\vp}\rfloor-1)\vp\big)\big)\int_{(\lfloor\frac{s}{\vp}\rfloor-1)\vp}^{\lfloor\frac{s}{\vp}\rfloor\vp}\s_j\big(\Y^m(l)\big)\d l,\Y^m(s)-\Y(s)\bigg)\d s,\\
		T_3(m,t)&:= \sum_{i=1}^m\sum_{1\leq j\leq m, j\neq i}^m\int_0^t\dot{\beta}_i^m(s)\dot{\beta}_j^m(s)\\&\qquad\quad \times\bigg(\D\s_i\big(\Y^m\big((\lfloor\frac{s}{\vp}\rfloor-1)\vp\big)\big)\int_{\lfloor\frac{s}{\vp}\rfloor\vp}^s\s_j\big(\Y^m(l)\big)\d l,\Y^m(s)-\Y(s)\bigg)\d s,\\
			T_4(m,t)&:= \sum_{i=1}^m\int_0^t\bigg(\big[\dot{\beta}^m(s)\big]^2\D\s_i\big(\Y^m\big((\lfloor\frac{s}{\vp}\rfloor-1)\vp\big)\big)\int_{\lfloor\frac{s}{\vp}\rfloor\vp}^s\s_j\big(\Y^m(l)\big)\d l\\&\qquad\quad  -\frac{1}{2}\D\s_i\big(\Y^m(s)\big)\s_i\big(\Y^m(s)\big),\Y^m(s)-\Y(s)\bigg)\d s, \\ 
				T_5(m,t)&:= -\frac{1}{2}\sum_{i=1}^m\int_0^t\dot{\beta}_i^m(s)\bigg(\D\s_i\big(\Y^m\big((\lfloor\frac{s}{\vp}\rfloor-1)\vp\big)\big)\\&\qquad\quad \times \int_{(\lfloor\frac{s}{\vp}\rfloor-1)\vp}^s\wi{\Tr}_m\big(\Y^m(l)\big)\d l,\Y^m(s)-\Y(s)\bigg)\d s,\\
					T_6(m,t)&:= \sum_{i=1}^m\int_0^t\dot{\beta}_i^m(s)\bigg(\int_0^1(1-\theta)\D^2\s_i\big(\theta\Y^m(s)+(1-\theta)\Y^m\big((\lfloor\frac{s}{\vp}\rfloor-1)\vp\big)\big) \\&\qquad
					\bigg\{\Y^m(s)-\Y^m\big((\lfloor\frac{s}{\vp}\rfloor-1)\vp\big),\Y^m(s)-\Y^m\big((\lfloor\frac{s}{\vp}\rfloor-1)\vp\big)\bigg\}\d \theta,\Y^m(s)-\Y(s)\bigg)\d s.
\end{align*}Now, we estimate $T_k(m,t)$ for $k=1,2,\ldots,6$ term by term. For $T_1(m,t)$, we have 
\begin{align*}
&	T_1(m,t)\\&=\sum_{i=1}^m\int_0^t\dot{\beta}_i^m(s)\bigg\langle \int_{(\lfloor\frac{s}{\vp}\rfloor-1)\vp}^s\A(l,\Y^m(l)),\D\s_i\big(\Y^m\big((\lfloor\frac{s}{\vp}\rfloor-1)\vp\big)\big)^*\big[\Y^m(s)-\Y(s)\big]\bigg\rangle\d l\d s.
\end{align*}Using \eqref{3.8} and  Hypothesis \ref{hyp2} (H.6), we obtain
\begin{align}\label{3.34}\nonumber
	\E\bigg[\sup_{t\in[0,\tau_m]}|T_1(m,t)|\bigg]&\nonumber\leq C_M \sum_{i=1}^m\E\bigg[\int_0^{\tau_m}|\dot{\beta}_i^m(s)|\bigg(\int_{(\lfloor\frac{s}{\vp}\rfloor-1)\vp}^s\|\A(l,\Y^m(l))\|_{\V^*}^\frac{\beta}{\beta-1}\d l\bigg)^\frac{\beta-1}{\beta}\\&\nonumber\qquad\times\bigg(\int_{(\lfloor\frac{s}{\vp}\rfloor-1)\vp}^s\|\Y^m(s)-\Y(s)\|_\V^\beta\d l\bigg)^\frac{1}{\beta}\d s\bigg]\\&\nonumber\leq C_M m^\frac{3}{2}2^\frac{m}{2}\vp^\frac{1}{\beta}\bigg\{\E\bigg[\int_0^{\tau_m}\d s\int_{(\lfloor\frac{s}{\vp}\rfloor-1)\vp}^s\big(f(l)+C\|\Y^m(l)\|_\V^\beta\big)\d l\bigg]\bigg\}^\frac{\beta-1}{\beta}
\\&\nonumber\leq C_M m^\frac{3}{2}2^\frac{m}{2}\vp\bigg\{\E\bigg[\int_0^{\tau_m}\big(f(l)+C\|\Y^m(l)\|_\V^\beta\big)\d l\bigg]\bigg\}^\frac{\beta-1}{\beta}
\\&\leq 
C_{M,T,\|f\|_{\L^1}}m^\frac{3}{2}2^{-\frac{m}{2}},
\end{align}where we have used Fubini's theorem and \eqref{1.16} in the following manner
\begin{align}\label{3.35}\nonumber
&\E\bigg[\int_0^{\tau_m}\d s \int_{(\lfloor\frac{s}{\vp}\rfloor-1)\vp}^s\big(f(l)+C\|\Y^m(l)\|_\V^\beta\big)\d l\bigg]\\&\nonumber\leq \E\bigg[\sum_{i=1}^{\lfloor \frac{\tau_m}{\vp}\rfloor\vp}\int_{(i-1)\vp}^{(i+1)\vp\wedge \tau_m}\d l\int_{i\vp}^{(i+1)\vp\wedge\tau_m}\big(f(l)+\|\Y^m(l)\|_\V^\beta\big)\d s\bigg]\\&\leq 2\vp\E\bigg[\int_0^{\tau_m}\big(f(l)+\|\Y^m(l)\|_\V^\beta\big)\d l\bigg].
\end{align}For the remaining terms $T_k(m,t),\ k=2,3,\ldots,6$, we refer the readers to the work \cite{TMRZ} (see proof of (2.19) in \cite[Section 2]{TMRZ}). Since the calculations are same, therefore we are not repeating here. For the sake the completeness, we are providing the bounds for each $T_k(m,t)$, for  $k=2,3,\ldots,6$, 
\begin{equation}\label{3.36}
	\left\{
	\begin{aligned}
		\E\bigg[\sup_{t\in[0,\tau_m]}|T_2(m,t)|+\sup_{t\in[0,\tau_m]}|T_3(m,t)|\bigg] &\leq C_{M,T}m^32^{-\frac{3m}{8}}, \\ 
				\E\bigg[\sup_{t\in[0,\tau_m]}|T_4(m,t)|\bigg] &\leq C_{M,T}m^22^{-\frac{3m}{8}}, \\
					\E\bigg[\sup_{t\in[0,\tau_m]}|T_5(m,t)|\bigg] &\leq C_{M,T}m^\frac{3}{2}2^{-\frac{m}{2}}, \\
						\E\bigg[\sup_{t\in[0,\tau_m]}|T_6(m,t)|\bigg] &\leq C_{M,T}m^\frac{3}{2}2^{-\frac{m}{4}}.
		\end{aligned}
	\right.
\end{equation}Combining \eqref{3.34}-\eqref{3.36}, and passing $m\to\infty$, we obtain the convergence in \eqref{3.28}.

\vspace{2mm}
\noindent
\textbf{Step 4.} Taking supremum from $0$ to $\tau_m$ and then expectation, respectively, both side of \eqref{3.9}, and using \eqref{3.23} and \eqref{3.26}-\eqref{3.28} in the final estimate, we find 
\begin{align}\label{3.37}\nonumber
	&\E\bigg[\sup_{s\in[0,\tau_m]} \|\Y^m(s)-\Y(s)\|_\H^2\bigg]\\&\leq  J_7(m)+J_8(m)+\E\bigg[\int_0^{\tau_m}\big(f(s)+\rho(\Y^m(s))+\eta(\Y(s))\big)\|\Y^m(s)-\Y(s)\|_\H^2\d s\bigg].
\end{align}Set $\mathsf{Y}(t):=\sup\limits_{s\in[0,\tau_m]} \|\Y^m(s)-\Y(s)\|_\H^2$ and $\displaystyle\mathsf{X}(t):=\int_0^t\big(f(s)+\rho(\Y^m(s))+\eta(\Y(s))\big)\d s$, for $t\in[0,T]$. Then, $\mathsf{Y}(\cdot)$ and $\mathsf{X}(\cdot)$ are adapted, non-negative and continuous. Using \eqref{3.8}, Hypothesis \ref{hyp1} (H.2), we find a positive constant $C'_M$ such that $\mathsf{X}(t)\leq C'_M$ uniformly for $t\in[0,\tau_m]$. From \eqref{3.37}, we have 
\begin{align}\label{3.38}
	\E\big[\mathsf{Y}(\tau_m)\big]\leq J_7(m)+J_8(m)+\E\bigg[\int_0^{\tau_m}\mathsf{Y}(s)\d\mathsf{X}\bigg].
\end{align}Using \cite[Lemma 2]{BGRM} in the above inequality, we obtain 
\begin{align}\label{3.39}
	\E\bigg[\int_0^{\tau_m}\mathsf{Y}(s)\d\mathsf{X}\bigg] \leq \big\{J_7(m)+J_8(m)\big\}e^{C'_M}\int_0^{C'_M}e^y\d y\to 0, \ \ \text{ as } \ \ m\to\infty,
\end{align}where we have used the convergences established in \eqref{3.25} and \eqref{3.28}. Hence, \eqref{3.38} with \eqref{3.25}, \eqref{3.28} and \eqref{3.39} imply that 
\begin{align*}
	\lim_{m\to\infty}	\E\bigg[\sup_{s\in[0,\tau_m]} \|\Y^m(s)-\Y(s)\|_\H^2\bigg]=0,
\end{align*}as required. 
\end{proof}

\section{Support of solutions  to the problem \eqref{1.1}} \label{Sec4}\setcounter{equation}{0}
In this section, we discuss the support of the solution to the problem \eqref{1.1} with the help of Wong-Zakai approximation result obtained in Section \ref{Sec3}. Let $T>0$ and $\W(\cdot)$ be a $\U$-cylindrical Wiener process on a probability space $(\Omega,\mathscr{F},\{\mathscr{F}_t\}_{t\geq0},\P)$, with the filteration  $\{\mathscr{F}_t\}_{t\geq0}$ generated by $\W(\cdot)$. For $m\in\N$, $t\in[0,T]$, set
\begin{align*}
	\Z^m(t):=\exp\bigg\{\int_0^t\dot{\W}^m(s)\d \W(s)-\frac{1}{2}\int_0^t\|\dot{\W}^m(s)\|_\U^2\d s\bigg\}, 
\end{align*}and 
\begin{align}\label{4.1}
	\widetilde{\W}^m(t):=\W(t)-\int_0^t\dot{\W}^m(s)\d s, 
\end{align}where $\dot{\W}^m(\cdot)$ is defined  \eqref{1.3}. Since the real-valued random variables $\dot{\beta}_i(l\vp), \ i,l\in\N$ are independent and for each $i,l\in\N$, $\vp^{\frac{1}{2}}\dot{\beta}_i(l\vp)$ is standard Gaussian,  for every $m\in\N$, we have 
\begin{align*}
	\sup_{t\in[0,T]} \E\left[e^{\xi\|\dot{\W}^m(t)\|_\U^2}\right]  = \sup_{t\in[0,T]}  \prod_{1\leq i \leq m}\E\big[e^{\xi|\dot{\beta}_i^m(t)|^2}\big]= \big\{\E\big[e^{\frac{\xi}{\vp}|\M|^2}\big]\big\}^m<\infty,
\end{align*}
for some standard Gaussian random variable $\M$ and for a small positive number  $\xi$ (Fernique’s theorem, \cite[Theorem 2.7]{DaZ}). Using Girsanov's theorem, the process $\{\widetilde{\W}^m(t)\}_{t\geq 0}$ defined in \eqref{4.1} is a cylindrical Wiener process under the probability measure $\P^m$ and the measure  $\P^m$ is absolute continuous with respect to the measure $\P$, that is, 
\begin{align*}
	\frac{\d \P^m}{\d \P}\bigg|_{\mathscr{F}_t}=\Z^m(t), \ \  \text{ for } \ \ t\in[0,T].
\end{align*}Similarly, for any $\g\in\L^2(0,T;\U)$, we define the following processes
\begin{align*}
	\Z_{\g}^m(t):=\exp\bigg\{-\int_0^t\g(s)\d\widetilde{\W}^m(s)-\frac{1}{2}\int_0^t\|\g(s)\|_\U^2\d s\bigg\}, 
\end{align*}and 
\begin{align}\label{4.2}
\widetilde{\W}_{\g}^m(t):=\widetilde{\W}^m(t)+\int_0^t\g(s)\d s, \ \text{ for } \ t\in[0,T],\ m\in\N.
\end{align}Again, using Girsanov's theorem, we find an another probability measure $\P_{\g}^m\ll \P^m\ll \P$, such that 
\begin{align}\label{4.3}
		\frac{\d \P_{\g}^m}{\d \P^m}\bigg|_{\mathscr{F}_t}=\Z_{\g}^m(t), \ \  \text{ for } \ \ t\in[0,T],
\end{align}and the process $\{\widetilde{\W}_{\g}^n(t)\}_{t\geq 0}$ is a cylindrical Wiener process under the probability measure $\P_{\g}^m$. 

	For $\g\in\L^2(0,T;\U)$, we consider the following two systems, which can be seen as two particular cases of the system  \eqref{4.4}
	\begin{equation}\label{4.06}
		\left\{
		\begin{aligned}
		\d \ZZ_{\g}(t)&=\A(t,\ZZ_{\g}(t))\d t+\s(\ZZ_{\g}(t))\g(t)\d t-\frac{1}{2}\wi{\Tr}_m(\ZZ_{\g}(t))\d t,\\
		\ZZ_{\g}(0)&=\boldsymbol{y}_0,
		\end{aligned}
		\right.
	\end{equation}and 
	\begin{equation}\label{4.07}
	\left\{
	\begin{aligned}
		\d \ZZ_{\g}^m(t)&=\A(t,\ZZ_{\g}^m(t))\d t+\s(\ZZ_{\g}^m(t))\g(t)\d t+\s(\ZZ_{\g}^m(t))\d\W(t)-\s(\ZZ_{\g}^m(t))\dot{\W}^m(t)\d t,\\
		\ZZ_{\g}(0)&=\boldsymbol{y}_0,
	\end{aligned}
	\right.
\end{equation}
where the operators $\A,\ \s, \ \dot{\W}$ and $\wi{\Tr}$ are defined in Section \ref{Sec2}. Moreover,  the well-posedness of the systems \eqref{4.06} and \eqref{4.07} has been established in Theorem \ref{thrm4.1}. Let $\ZZ_{\g}(\cdot)$ and $\ZZ_{\g}^m(\cdot)$ be the solutions to the systems \eqref{4.06} and \eqref{4.07}, respectively. Then, $\ZZ_{\g}, \ZZ_{\g}^m\in\C([0,T];\H),\ \P$-a.s., and the following uniform energy estimates hold:
\begin{align}\label{4.08}
\E\bigg[\sup_{t\in[0,T]}\|\ZZ_{\g}(t)\|_\H^p\bigg]+\E\bigg[\int_0^T\|\ZZ_{\g}(t)\|_\V^\beta\d t\bigg]^{\frac{p}{2}}&\leq C<\infty,\\ \label{4.09}
	\E\bigg[\sup_{t\in[0,T]}\|\ZZ_{\g}^m(t)\|_\H^p\bigg]+\E\bigg[\int_0^T\|\ZZ_{\g}^m(t)\|_\V^\beta\d t\bigg]^{\frac{p}{2}}&\leq C_m<\infty.
\end{align}
The following result can be obtained in a similar manner as Theorem \ref{thrm3.3}, which demonstrates the Wong-Zakai approximation results for the system \eqref{4.06}.

\begin{lemma}\label{lem4.02}
	Assume that  the Hypotheses \ref{hyp1} and \ref{hyp2} hold  and the initial data $\boldsymbol{y}_0\in\L^p(\Omega;\H)$, for $p>\max\big\{\frac{\beta}{\beta-1},2\big\}$. For $\g\in\L^2(0,T;\U)$, let  $\ZZ_{\g}, \ZZ_{\g}^m$ be the solutions to the problems \eqref{4.06} and \eqref{4.07}, respectively, with the same initial data $\boldsymbol{y}_0\in\L^p(\Omega;\H)$. Then 
	\begin{align}\label{4.010}
\lim_{m\to\infty}\E\bigg[\sup_{t\in[0,T]}\|\ZZ_{\g}^m(t)-\ZZ_{\g}(t)\|_\H^2\bigg]=0.		
	\end{align}
\end{lemma}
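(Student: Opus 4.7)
The plan is to adapt the argument of Theorem~\ref{thrm3.3} essentially line-by-line, since the structural difference between \eqref{4.06} and \eqref{4.07} is precisely the same ``Wong-Zakai residual'' $\s\,d\W - \s\,\dot{\W}^m\,dt + \tfrac{1}{2}\wi{\Tr}_m\,dt$ that appeared in the comparison of \eqref{1.1} and \eqref{1.6}, together with the common control drift $\s\g\,dt$ which cancels in the difference. First I would introduce the analogues of the localizing stopping times \eqref{3.3}: $\tau_M^1$ defined via $\|\ZZ_\g\|_\H + \int_0^\cdot\|\ZZ_\g\|_\V^\beta\,ds$, $\tau_{M,m}^2$ via the corresponding quantities for $\ZZ_\g^m$, and $\tau_m^3$ exactly as in Section~\ref{Sec3}. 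Using the uniform estimates \eqref{4.08}--\eqref{4.09} and Lemma~\ref{lem3.1}, each of these stopping times equals $T$ with probability tending to $1$ (uniformly in $m$ for $\tau_{M,m}^2$), so the H\"older splitting of \eqref{3.7} reduces the task to bounding $\E\bigl[\sup_{t\in[0,\tau_m]}\|\ZZ_\g^m(t)-\ZZ_\g(t)\|_\H^2\bigr]$ with $\tau_m := \tau_M^1\wedge\tau_{M,m}^2\wedge\tau_m^3$.

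Applying It\^o's formula to $\|\ZZ_\g^m(t)-\ZZ_\g(t)\|_\H^2$ produces a decomposition into four groups mirroring $I_1,\ldots,I_4$ of \eqref{3.9}. The monotonicity term $2\langle \A(s,\ZZ_\g^m)-\A(s,\ZZ_\g),\ZZ_\g^m-\ZZ_\g\rangle$ is controlled by Hypothesis~\ref{hyp1} (H.2); the control contribution $2([\s(\ZZ_\g^m)-\s(\ZZ_\g)]\g,\ZZ_\g^m-\ZZ_\g)$ is handled by (H.2), Young's inequality and $\g\in\L^2(0,T;\U)$, producing an extra $\|\g(s)\|_\U^2$ factor in the eventual Gronwall integrand; and the BDG-martingale from $\int_0^t 2(\s(\ZZ_\g^m)\,d\W,\ZZ_\g^m-\ZZ_\g)$ is absorbed exactly as in \eqref{3.27}.

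The main obstacle, as in Theorem~\ref{thrm3.3}, is controlling the Wong-Zakai residual
\begin{equation*}
\int_0^t\Bigl\{\|\s(\ZZ_\g^m(s))\|_{\L_2}^2 - 2(\s(\ZZ_\g^m(s))\dot{\W}^m(s),\ZZ_\g^m(s)-\ZZ_\g(s)) + (\wi{\Tr}_m(\ZZ_\g(s)),\ZZ_\g^m(s)-\ZZ_\g(s))\Bigr\}\,ds,
\end{equation*}
the direct analogue of $I_4(m,t)$. I would rewrite $\int_0^\cdot\s(\ZZ_\g^m)\dot{\W}^m\,ds$ via the identity \eqref{3.4} as a stochastic integral against $d\W$, so that the difference $\s(\ZZ_\g^m)\,d\W - \s(\ZZ_\g^m)\dot{\W}^m\,ds$ becomes amenable to the second-order Taylor expansion \eqref{3.30} of each $\s_i$ around $\ZZ_\g^m((\lfloor s/\vp\rfloor-1)\vp)$. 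Substituting \eqref{3.31}--\eqref{3.32} and collecting terms yields a six-piece decomposition $T_1+\cdots+T_6$ as in \eqref{3.33}, with the correction $\tfrac{1}{2}\wi{\Tr}_m(\ZZ_\g)$ cancelling the otherwise-divergent leading contribution in $T_4$. Each $T_k$ is then estimated using Hypothesis~\ref{hyp2} (H.6), the bounds \eqref{3.8} on the stopped processes, and the polynomial-in-$m$ versus $2^{-\alpha m}$ trade-offs \eqref{3.34}--\eqref{3.36}, all of which transfer verbatim because $\A$ and $\s$ satisfy identical hypotheses in both settings.

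Finally, setting $\mathsf{Y}(t) := \sup_{s\in[0,t\wedge\tau_m]}\|\ZZ_\g^m(s)-\ZZ_\g(s)\|_\H^2$ and $\mathsf{X}(t) := \int_0^t\bigl(f(s)+\rho(\ZZ_\g^m(s))+\eta(\ZZ_\g(s))+\kappa(\ZZ_\g^m(s))+\varkappa(\ZZ_\g(s))+\|\g(s)\|_\U^2\bigr)ds$, the process $\mathsf{X}(\cdot)$ is adapted, continuous, non-decreasing, and uniformly bounded on $[0,\tau_m]$ by the choice of stopping times together with Hypothesis~\ref{hyp1}. Assembling the estimates above yields $\E[\mathsf{Y}(\tau_m)] \leq o_m(1) + \E\bigl[\int_0^{\tau_m}\mathsf{Y}(s)\,d\mathsf{X}(s)\bigr]$, and the stochastic Gronwall lemma \cite[Lemma 2]{BGRM}, applied precisely as in the concluding step of Theorem~\ref{thrm3.3}, gives $\E[\mathsf{Y}(\tau_m)]\to 0$ as $m\to\infty$. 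Sending $M\to\infty$ completes the proof of \eqref{4.010}.
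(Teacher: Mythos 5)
Your proposal is correct and follows essentially the same route as the paper: the paper's own proof simply says the argument of Theorem \ref{thrm3.3} carries over, with the only new term being $\int_0^t\big(\big[\s(\ZZ_{\g}^m(s))-\s(\ZZ_{\g}(s))\big]\g(s),\ZZ_{\g}^m(s)-\ZZ_{\g}(s)\big)\d s$, which it bounds exactly as you do by $\int_0^t\big(\kappa(\ZZ_{\g}^m(s))+\varkappa(\ZZ_{\g}(s))+\|\g(s)\|_\U^2\big)\|\ZZ_{\g}^m(s)-\ZZ_{\g}(s)\|_\H^2\d s$ via (H.2) and Young's inequality before invoking the stochastic Gronwall lemma. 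Your write-up is in fact more detailed than the paper's, which leaves the transfer of the $I_4$-type decomposition implicit.
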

\begin{proof}
	The proof of \eqref{4.010} will be on the similar lines as Theorem \ref{thrm3.3}. In comparison with \eqref{3.9}, we only  need to control the term $\displaystyle\int_0^t\big(\big[\s(\ZZ_{\g}^m(s))-\s(\ZZ_{\g}(s))\big]\g(s),\ZZ_{\g}^m(s)-\ZZ_{\g}(s)\big)\d s$ and it can be handled in the following way: 
	\begin{align*}
	&	\int_0^t\big(\big[\s(\ZZ_{\g}^m(s))-\s(\ZZ_{\g}(s))\big]\g(s),\ZZ_{\g}^m(s)-\ZZ_{\g}(s)\big)\d s \\&\leq \int_0^t\big(\kappa(\ZZ_{\g}^m(s))+\varkappa(\ZZ_{\g}(s))+\|\g(s)\|_\U^2\big)\|\ZZ_{\g}^m(s)-\ZZ_{\g}(s)\|_\H^2\d s.
	\end{align*}Since $\g\in\L^2(0,T;\U)$, the above inequality can be controlled and similar arguments as in the proof of Theorem \ref{thrm3.3} give the required convergence \eqref{4.010}.
\end{proof}
Set $\DD=\C([0,T];\H)$ and  $\LL=\{\ZZ_{\g}, \ \g\in\L^2(0,T;\U)\}$. Note that $\LL\subset\DD$. The proof of the following result is based on similar arguments as in  the proof of \cite[Theorem 4.4]{TMRZ}, and for the sake of completeness we are providing a proof here.
\begin{theorem}\label{thrm4.03}
	Assume that Hypotheses \ref{hyp1} and \ref{hyp2} and \eqref{1.3} hold and the initial data $\boldsymbol{y}_0\in\L^p(\Omega;\H)$ for $p>\max\big\{\frac{\beta}{\beta-1},2\big\}$. Let $\Y(\cdot)$ be the solution to the system \eqref{1.1}. Then 
	\begin{align*}
		\mathrm{supp}(\P\circ\Y^{-1})=\bar{\LL},
	\end{align*}where $\bar{\LL}$ is the closure of $\LL$ in $\DD$ and $\mathrm{supp}(\P\circ\Y^{-1})$ represents the support of the distribution $\P\circ\Y^{-1}$.
\end{theorem}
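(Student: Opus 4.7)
The plan is to establish the two set inclusions $\bar{\LL}\subseteq\mathrm{supp}(\P\circ\Y^{-1})$ and $\mathrm{supp}(\P\circ\Y^{-1})\subseteq\bar{\LL}$ separately. Both are driven by the Wong-Zakai convergences of Theorem \ref{thrm3.3} and Lemma \ref{lem4.02} combined with the Girsanov densities $\Z^m$ and $\Z^m_{\g}$ introduced in the preliminary discussion. Since $\dot{\W}^m$ is bounded and piecewise constant and $\g\in\L^2(0,T;\U)$, Novikov's condition is satisfied in both cases, so $\P^m$ and $\P^m_{\g}$ are probability measures equivalent to $\P$ on $\mathscr{F}_T$. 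For transparency I present the argument with deterministic $\boldsymbol{y}_0$; the case of random $\boldsymbol{y}_0\in\L^p(\Omega;\H)$ follows by conditioning on $\mathscr{F}_0$.

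For the first inclusion $\bar{\LL}\subseteq\mathrm{supp}(\P\circ\Y^{-1})$, I fix $\g\in\L^2(0,T;\U)$ and substitute the identity $\d\W(s)=\d\wi{\W}^m_{\g}(s)-\g(s)\,\d s+\dot{\W}^m(s)\,\d s$ into \eqref{4.07}; the three drift terms involving $\g$, $\dot{\W}^m$ and $\W$ collapse, producing
\begin{align*}
	\d\ZZ^m_{\g}(t)=\A(t,\ZZ^m_{\g}(t))\,\d t+\s(\ZZ^m_{\g}(t))\,\d\wi{\W}^m_{\g}(t),\qquad \ZZ^m_{\g}(0)=\boldsymbol{y}_0.
\end{align*}
By Girsanov's theorem $\wi{\W}^m_{\g}$ is a cylindrical Wiener process on $(\Omega,\mathscr{F},\{\mathscr{F}_t\},\P^m_{\g})$, so this is precisely the original system \eqref{1.1}. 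Pathwise uniqueness from Theorem \ref{thrm1} forces the laws to coincide, namely $\P\circ\Y^{-1}=\P^m_{\g}\circ(\ZZ^m_{\g})^{-1}$, and consequently
\begin{align*}
	\P(\Y\in U)=\P^m_{\g}(\ZZ^m_{\g}\in U)
\end{align*}
for every open $U\subseteq\DD$. Taking $U$ to be any open neighborhood of $\ZZ_{\g}$, Lemma \ref{lem4.02} yields $\ZZ^m_{\g}\to\ZZ_{\g}$ in $\L^2(\Omega;\DD)$ under $\P$, hence in $\P^m_{\g}$-probability by equivalence of measures, so the right-hand side is strictly positive for $m$ large enough. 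Thus $\ZZ_{\g}\in\mathrm{supp}(\P\circ\Y^{-1})$, and closedness of the support gives $\bar{\LL}\subseteq\mathrm{supp}(\P\circ\Y^{-1})$.

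For the reverse inclusion it suffices to prove $\P(\Y\in\bar{\LL})=1$. The key observation is that for each $\omega\in\Omega$ the trajectory $s\mapsto\dot{\W}^m(\omega,s)$ is a piecewise constant function taking values in $\mathrm{span}\{\ee_1,\ldots,\ee_m\}$, so it is (viewed with $\omega$ fixed) a deterministic element of $\L^2(0,T;\U)$. Setting $\g_m(\omega)(\cdot):=\dot{\W}^m(\omega,\cdot)$, a direct comparison of \eqref{1.6} with \eqref{4.06} shows that $\Y^m(\omega)$ and $\ZZ_{\g_m(\omega)}$ satisfy the same pathwise deterministic equation with the same initial datum; pathwise uniqueness for the general system \eqref{4.4} from Theorem \ref{thrm4.1} therefore forces
\begin{align*}
	\Y^m(\omega)=\ZZ_{\g_m(\omega)}\in\LL\qquad\text{for }\P\text{-a.e.\ }\omega\text{ and every }m\in\N.
\end{align*}
Theorem \ref{thrm3.3} gives $\Y^m\to\Y$ in $\L^2(\Omega;\DD)$, and extracting a subsequence $\{m_k\}$ one obtains $\Y^{m_k}(\omega)\to\Y(\omega)$ in $\DD$ for $\P$-a.e.\ $\omega$. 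Hence $\Y(\omega)\in\bar{\LL}$ almost surely, which finishes the proof.

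The principal technical obstacle is the Girsanov step in the first inclusion: one must verify rigorously that the pathwise identity $\s\,\d\W-\s\dot{\W}^m\,\d s+\s\g\,\d s=\s\,\d\wi{\W}^m_{\g}$ survives the change of measure as an equation in $\H$ (so that \eqref{4.07} literally becomes \eqref{1.1}), and that the product density $\Z^m\Z^m_{\g}$ is a genuine martingale so that $\P^m_{\g}$ is a probability measure. The former uses the Hilbert-Schmidt regularity of $\s$ from Hypothesis \ref{hyp1} (H.5), while the latter rests on the explicit finite-dimensional Gaussian structure of $\dot{\W}^m$ together with the Fernique-type estimate quoted at the start of Section \ref{Sec4}. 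A secondary subtlety in the second inclusion is that $\g_m$ depends on $\omega$; however, for each fixed $\omega$ it is a genuine deterministic element of $\L^2(0,T;\U)$, so $\ZZ_{\g_m(\omega)}$ is a bona fide member of $\LL$.
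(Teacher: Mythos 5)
Your proof is correct and follows the same two--inclusion strategy as the paper: the inclusion $\mathrm{supp}(\P\circ\Y^{-1})\subseteq\bar{\LL}$ comes from the pathwise identification $\Y^m=\ZZ_{\dot{\W}^m}\in\LL$ together with Theorem \ref{thrm3.3}, and the reverse inclusion from Girsanov's theorem combined with Lemma \ref{lem4.02}. The one place where your route genuinely differs is the implementation of the Girsanov step for $\bar{\LL}\subseteq\mathrm{supp}(\P\circ\Y^{-1})$: you argue directly on $(\Omega,\mathscr{F},\P)$, noting that under $\P^m_{\g}$ the process $\ZZ^m_{\g}$ is a weak solution of \eqref{1.1} driven by $\widetilde{\W}^m_{\g}$ and invoking uniqueness in law to obtain $\P\circ\Y^{-1}=\P^m_{\g}\circ(\ZZ^m_{\g})^{-1}$. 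The paper instead transfers to the canonical space $\mathbb{W}_0^{\U_1}$, uses the Yamada--Watanabe theorem to produce a measurable solution map $\mathcal{A}_{\P\circ\boldsymbol{y}_0^{-1}}$, realizes $\ZZ_{\g}^m$ as this map evaluated along the explicit measurable shift $\S^m_{\g}$ of Wiener space, and then uses $\bar{\P}^{m}_{\g}\ll\bar{\P}$ to convert positivity under the shifted measure into positivity under $\bar{\P}$. The ingredients are the same --- your ``uniqueness in law'' is itself delivered by Yamada--Watanabe --- but the paper's formulation makes explicit the measurability of the solution functional, which is precisely what justifies evaluating the solution along the shifted noise path; your version implicitly relies on this. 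One small imprecision: since $\P^m_{\g}$ varies with $m$, ``convergence in $\P^m_{\g}$-probability'' is not meaningful as stated. What you actually need (and what holds) is that for each $\mu>0$ and each sufficiently large fixed $m$ one has $\P(\|\ZZ^m_{\g}-\ZZ_{\g}\|_{\DD}<\mu)>0$ by Lemma \ref{lem4.02}, and then the equivalence $\P\sim\P^m_{\g}$ for that single $m$ transfers strict positivity of this one event; this repairs the step without changing the conclusion.
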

\begin{proof}
	Let $\Y(\cdot)$ and $\Y^m(\cdot)$ be the unique solutions to the problems \eqref{1.1} and \eqref{1.6}, respectively. Since $\ZZ_{\g}(\cdot)$ represents the solution to the problem \eqref{4.06}, then, choosing $\g=\dot{\W}^m$ in \eqref{4.06},  we obtain $\ZZ_{\dot{\W}^m}(\cdot)=\Y^m(\cdot),\ \P$-a.s. Moreover, by Theorem \ref{thrm3.3}, for any $\mu>0$, we have
	\begin{align*}
		\lim_{m\to\infty}\P\big\{\|\ZZ_{\dot{\W}^m}-\Y\|_{\DD}\geq \mu\big\}=\lim_{m\to\infty}\P\big\{\|\Y^m-\Y\|_{\DD}\geq \mu	\big\}=0.
	\end{align*}Since $\P$-a.s., $\dot{\W}^m\in\L^2(0,T;\U),$ we have
\begin{align}\label{4.011}
	\mathrm{supp}(\P\circ\Y^{-1})\subset\bar{\LL}.
\end{align} 
\vspace{2mm}
\noindent
\textbf{Claim:}  $\mathrm{supp}(\P\circ\Y^{-1})\supset\bar{\LL}$.

Using \cite[Remark 2.5.1]{WLMR2}, we obtain an another Hilbert space $\U_1\supset\U$ such that the embedding from $\big(\U,\langle \cdot,\cdot\rangle_{\U}\big)$ to $\big(\U_1,\langle \cdot,\cdot\rangle_{\U_1}\big)$ is Hilbert-Schmidt.  Therefore, we can find a set $\{\ee_i\}_{i\in\N}\subset\U$, $0< \lambda_i\uparrow\infty$ such that $\{\ee_i\}_{i\in\N}$ is an orthonormal basis of $\U$ and $\{\sqrt{\lambda_i}\ee_i\}_{i\in\N}$ is an orthonormal basis in the Hilbert space $\U_1$. Fix such $\U_1$ and define $\mathbb{W}^{\U_1}:=\C([0,\infty);\U_1)$ and 
\begin{align*}
	\mathbb{W}_0^{\U_1}:=\big\{\w\in\mathbb{W}^{\U_1}:\w(0)={0}\big\},
\end{align*}and the metric associated with $\mathbb{W}_0^{\U_1}$ is defined by 
\begin{align*}
	d(\v_1,\v_2):=\sum_{i=1}^{\infty}2^{-i}\bigg(\max_{0\leq t\leq i}\|\v_1(t)-\v_2(t)\|_{\U_1}\wedge1\bigg), \ \ \v_1,\v_2\in 	\mathbb{W}_0^{\U_1}.
\end{align*}The space $	\mathbb{W}_0^{\U_1}$ is a Polish space with respect the metric $d(\cdot,\cdot)$. Denote the Borel sigma-algebra of $\mathbb{W}_0^{\U_1}$ by   $\mathcal{B}(\mathbb{W}_0^{\U_1})$. It follows that $\W\in\mathbb{W}_0^{\U_1}, \ \P$-a.s. Let $\{\mathcal{B}_t(\mathbb{W}_0^{\U_1})\}_{t\geq0}$ be the normal filtration generated by the canonical process $\mathtt{w}$.  We obtain an another  complete probability space 
\begin{align*}
	\big(\mathbb{W}_0^{\U_1},\cup_{t\geq0}\mathcal{B}_t(\mathbb{W}_0^{\U_1}),\mathcal{B}_t(\mathbb{W}_0^{\U_1}),\bar{\P}\big),
\end{align*}where $\bar{\P}$ denotes the distribution of  $\mathtt{w}$ in $\C([0,\infty);\U_1)$, that is, 
\begin{align}\label{4.012}
	\bar{\P}\circ \mathtt{w}^{-1}=\P\circ\W^{-1}.
\end{align}Let $\boldsymbol{y}_0$ be $\mathscr{F}_0\slash\mathcal{B}(\H)$-measurable and $\boldsymbol{y}_0\in\L^p(\Omega;\H)$ for $p>\max\big\{\frac{\beta}{\beta-1},2\big\}$. By Theorem \ref{thrm2} and the Yamada-Watanabe Theorem (see \cite[Theorem E.1.8]{CPMR}), we can find a measurable map
\begin{align*}
	\mathcal{A}_{\P\circ\boldsymbol{y}_0^{-1}}:(\H\times \mathbb{W}_0^{\U_1},\mathcal{B}(\H)\otimes \mathcal{B}_t(\mathbb{W}_0^{\U_1}))\to (\DD,\mathcal{B}(\DD)),
\end{align*}where $\DD=\C([0,T];\H)$ such that $\Y:=	\mathcal{A}_{\P\circ\boldsymbol{y}_0^{-1}}(\boldsymbol{y}_0,\W)$ is the solution of the problem \eqref{1.1} with the initial data $\Y(0)=\boldsymbol{y}_0,\ \P$-a.s. For any $\g\in\L^2(0,T;\U)$, define the map $\S_{\g}^m$ on $(\mathbb{W}_0^{\U_1},\mathcal{B}(\mathbb{W}_0^{\U_1}))$ by 
\begin{align*}
	\S_{\g}^m(\x)=\x-\int_0^{\cdot}\dot{\x}^m(s)\d s+\int_0^{\cdot}\g(s)\d s, \ \ \x\in\mathbb{W}_0^{\U_1},
\end{align*}where 
\begin{align*}
	\dot{\x}^m(t):=\sum_{i=1}^m\bigg\langle \frac{\lambda_i}{\vp}\bigg[\x\big(\lfloor\frac{t}{\vp}\rfloor\vp\big)-\x\big((\lfloor\frac{t}{\vp}\rfloor-1)\vp\big)\bigg],\ee_i\bigg\rangle_{\U_1}, \ \ t\in[0,T]. 
\end{align*}
From \eqref{4.1} to \eqref{4.3}, we conclude that the maps $\S_{\g}^m$ is a measurable transformation of Wiener space $\mathbb{W}_0^{\U_1}$. Choose a $\mathcal{B}(\mathbb{W}_0^{\U_1})\slash\mathcal{B}(\H)$-measurable map $\x_0:\mathbb{W}_0^{\U_1}\to\H$ such that $\bar{\P}\circ\x_0^{-1}=\P\circ\boldsymbol{y}_0^{-1}$. Then, $	\mathcal{A}_{\bar{\P}\circ\x_0^{-1}}(\x_0(\omega),\omega)$ is also a solution of the problem \eqref{1.1} with the initial data $\boldsymbol{y}_0$ and noise $\mathtt{w}$. Since, $\x_0$ is $\mathcal{B}(\mathbb{W}_0^{\U_1})\slash\mathcal{B}(\H)$-measurable, $\x_0(\S_{\g}^m(\omega))=\x_0(\omega)$, for all $m$.  Again, by the Yamada-Watanabe theorem, pathwise uniqueness gives for  every $\mu>0,\ m\in\N$ 
\begin{align}\label{4.013}
	\bar{\P}\left\{\omega:\|	\mathcal{A}_{\bar{\P}\circ\x_0^{-1}}(\x_0(\omega),\S_{\g}^m(\omega))-\ZZ_{\g}\|_\DD\geq\mu\right\}=\P\big\{\|\ZZ_{\g}^m-\ZZ_{\g}\|_{\DD}\geq \mu\big\}, 
\end{align}which implies from Lemma \ref{lem4.02} that for $\mu$ in \eqref{4.013}, we have
\begin{align}\label{4.014}
	\lim_{m\to\infty}\P \big\{\|\ZZ_{\g}^m-\ZZ_{\g}\|_{\DD}\geq \mu\big\}=0.
\end{align}For $\bar{\P}_{\g}^m=\bar{\P}\circ\S_{\g}^{m^{-1}}, \ m\in\N$ along with \eqref{4.013} and \eqref{4.014}, it follows that there exists $m_0\in\N$ such that \begin{align*}
&\bar{\P}_{\g}^{m_0}\big\{\omega:\|\mathcal{A}_{\bar{\P}\circ\x_0^{-1}}(\x_0(\omega),\omega)-\ZZ_{\g}\|_\DD<\mu\big\}\\&= \bar{\P}\big\{\omega:\|\mathcal{A}_{\bar{\P}\circ\x_0^{-1}}(\x_0(\omega),\S_{\g}^m(\omega))-\ZZ_{\g}\|_\DD<\mu\big\}>0.
\end{align*}By \eqref{4.3}, we know that $\bar{\P}_{\g}^{m_0}\ll\bar{\P}$, which implies 
\begin{align*}
	\P\big\{\|\Y-\ZZ_{\g}\|_{\DD}<\mu\big\}=\bar{\P}\big\{\omega:\|\mathcal{A}_{\bar{\P}\circ\x_0^{-1}}(\x_0(\omega),\omega)-\ZZ_{\g}\|_\DD<\mu\big\}>0.
\end{align*}Thus, 
\begin{align}\label{4.0151}
	\mathrm{supp}\big(\P\circ\Y^{-1}\big)\supset\bar{\LL}.
\end{align}Combining \eqref{4.011} and \eqref{4.0151}, we obtain the required result, that is, $\mathrm{supp}\big(\P\circ\Y^{-1}\big)=\bar{\LL}$.
\end{proof}

\section{Examples}\label{Application}
 The results established in this work are applicable to the following models under Hypotheses \ref{hyp1} and \ref{hyp2}. For details, one can see the book \cite{WLMR1} or the article \cite{MRSSTZ}.
 \begin{enumerate}
 	\item The hydrodynamic models like two-dimensional Navier-Stokes equations, two-dimensional magneto-hydrodynamic equations, two-dimensional Boussinesq model for the B\'enard convection, two-dimensional Boussinesq system, two-dimensional magnetic B\'enard equations, three-dimensional Leray-$\alpha$ model, the Ladyzhenskaya model, and some shell models of turbulence, etc., (see \cite{ZBWLJZ,ZBXPJZ,ICAM1,ICAM2,AKMTM5,TMRZ,EM2,XPJYJZ}, and the references therein).
 	\item The fluid dynamic models like porous media equation, $p$-Laplacian equations, fast-diffusion equations, power law fluids, Allen-Cahn equations, Kuramoto-Sivashinsky equations and two-dimensional tamed Navier-Stokes equations (see \cite{WLMR1,TMRZ}).
 	\item SPDEs with locally monotone coefficients can be covered by the framework of this article (see \cite{WL4,WLMR1,TMRZ}).
 	\item The framework of this work covers several models which are not covered by the framework of existing literature on locally monotone SPDEs like Quasilinear SPDEs, convection diffusion equations, Cahn-Hilliard equations and two-dimensional liquid crystal model (see \cite[Section 4]{MRSSTZ}).
 \end{enumerate}

\medskip\noindent
{\bf Acknowledgments:} The first author would like to thank Ministry of Education, Government of India for financial assistance. Kush Kinra would like to thank the Council of Scientific $\&$ Industrial Research (CSIR), India for financial assistance (File No. 09/143(0938) /2019-EMR-I).  M. T. Mohan would  like to thank the Department of Science and Technology (DST) Science $\&$ Engineering Research Board (SERB), India for a MATRICS grant (MTR/2021/000066). We express our gratitude to Professors Ting Ma and Rongchan Zhu for their support to improve the proofs of Lemma \ref{lem4.2} and Theorem \ref{thrm3.3}.
 The authors sincerely would like to thank the reviewer for his/her valuable comments and suggestions.

\medskip\noindent	{\bf  Declarations:} 

\noindent 	{\bf  Ethical Approval:}   Not applicable 

\noindent  {\bf   Competing interests: } The authors declare no competing interests. 

\noindent 	{\bf   Authors' contributions: } All authors have contributed equally. 

\noindent 	{\bf   Funding: } DST, India, MATRICS grant (MTR/2021/000066) (M. T. Mohan).

\noindent 	{\bf   Availability of data and materials: } Not applicable.

\end{document}